\theoremstyle{plain}
\newtheorem{lemma}{Lemma}[section]
\newtheorem*{theorem*}{Theorem}
\newtheorem*{lemma*}{Lemma}
\newtheorem*{proposition*}{Proposition}
\newtheorem*{conjecture*}{Conjecture}
\newtheorem*{corollary*}{Corollary}
\newtheorem*{problem*}{Problem}
\newtheorem{theorem}[lemma]{Theorem}
\newtheorem{corollary}[lemma]{Corollary}
\newtheorem{proposition}[lemma]{Proposition}
\newtheorem{proposition-definition}[lemma]{Proposition and Definition}
\theoremstyle{definition}
\newtheorem{definition}[lemma]{Definition}
\newtheorem{remark}[lemma]{Remark}
\newtheorem{warning}[lemma]{Warning}
\newcommand{\F}[1]{\mathscr{#1}}
\newcommand{\C}{\mathbb{C}}
\newcommand{\Q}{\mathbb{Q}}
\newcommand{\R}{\mathbb{R}}
\newcommand{\OO}{\mathcal{O}}
\newcommand{\te}{\otimes}
\newcommand{\cF}{\mathcal F}
\newcommand{\cQ}{\mathcal Q}
\newcommand{\cA}{\mathcal A}
\renewcommand{\cD}{\mathcal{D}}
\newcommand{\rH}{\mbox{H}}
\newcommand{\gr}{\mathrm{gr}}
\newcommand{\cZ}{\mathcal{Z}}
\newcommand{\ZZ}{\mathbb{Z}}
\renewcommand{\P}{\mathbb{P}}
\newcommand{\PP}{\mathbb{P}}
\DeclareMathOperator{\ch}{ch}
\DeclareMathOperator{\Aut}{Aut}
\DeclareMathOperator{\Hom}{Hom}
\DeclareMathOperator{\Pic}{Pic}
\DeclareMathOperator{\rk}{rk}
\DeclareMathOperator{\Ext}{Ext}
\DeclareMathOperator{\ext}{ext}
\DeclareMathOperator{\Supp}{Supp}
\DeclareMathOperator{\im}{Im}
\DeclareMathOperator{\Flag}{Flag}
\DeclareMathOperator{\coh}{coh}
\DeclareMathOperator{\Amp}{Amp}
\DeclareMathOperator{\Nef}{Nef}
\DeclareMathOperator{\NS}{NS}
\begin{document}

\title{The ample cone of moduli spaces of sheaves on the plane}

\date{\today}
\author[I. Coskun]{Izzet Coskun}
\author[J. Huizenga]{Jack Huizenga}

\address{Department of Mathematics, Statistics and CS \\University of Illinois at Chicago, Chicago, IL 60607}
\email{coskun@math.uic.edu}
\email{huizenga@math.uic.edu}
\thanks{During the preparation of this article the first author was partially supported by the NSF CAREER grant DMS-0950951535,  and the second author was partially supported by a National Science Foundation Mathematical Sciences Postdoctoral Research Fellowship}
\subjclass[2010]{Primary: 14J60. Secondary: 14E30, 14J26, 14D20, 13D02}
\keywords{Moduli space of stable vector bundles, Minimal Model Program, Bridgeland Stability Conditions,  ample cone}

\begin{abstract}
Let $\xi$ be the Chern character of a stable sheaf on $\P^2$. Assume either $\rk(\xi)\leq 6$ or $\rk(\xi)$ and $c_1(\xi)$ are coprime and the discriminant $\Delta(\xi)$ is sufficiently large.  We use recent results of Bayer and Macr\`i \cite{BayerMacri2} on Bridgeland stability to compute the ample cone of the moduli space $M(\xi)$ of Gieseker semistable sheaves on $\P^2$.  We recover earlier results, such as those by Str\o mme \cite{Stromme} and Yoshioka \cite{Yoshioka}, as special cases.
\end{abstract}

\maketitle
\setcounter{tocdepth}{1}
\tableofcontents

\section{Introduction}

Let $\xi$ be the Chern character of a stable  sheaf on $\PP^2$. The moduli space $M(\xi)$ parameterizes $S$-equivalence classes of Gieseker semistable sheaves with Chern character $\xi$. It is an irreducible, normal, factorial, projective variety \cite{LePotierLectures}. In this paper, we determine the ample cone of $M(\xi)$  when either $\rk(\xi)\leq 6$ or $\rk(\xi)$ and $c_1(\xi)$ are coprime and the discriminant $\Delta(\xi)$ is sufficiently large.

The {\em ample cone} $\Amp(X)$ of a projective variety $X$ is the open convex cone in the N\'{e}ron-Severi space spanned by the classes of ample divisors. It controls embeddings of $X$ into projective space and is among the most important invariants of $X$. Its closure, the {\em nef cone} $\Nef(X)$, is spanned by divisor classes that have non-negative intersection with every curve on $X$ and is dual to the Mori cone of curves on $X$  under the intersection pairing (see \cite{Lazarsfeld}). We now describe our results on $\Amp(M(\xi))$ in greater detail.

Let $\xi$ be an integral Chern character with rank $r>0$.  We record such a character as a triple $(r,\mu,\Delta)$, where $$\mu = \frac{\ch_1}{r} \qquad \textrm{and}  \qquad \Delta = \frac{1}{2}\mu^2 - \frac{\ch_2}{r}$$ are the \emph{slope} and \emph{discriminant}, respectively.  We call the character $\xi$ (semi)stable if there exists a (semi)stable sheaf of character $\xi$. Dr\'{e}zet and Le Potier give an explicit curve $\delta(\mu)$ in the $(\mu, \Delta)$-plane such that the moduli space $M(\xi)$ is positive dimensional if and only if $\Delta \geq \delta(\mu)$ \cite{DLP}, \cite{LePotierLectures}.  The vector bundles whose Chern characters satisfy $\Delta = \delta(\mu)$ are called {\em height zero bundles}. Their moduli spaces have Picard group isomorphic to $\ZZ$. Hence, their ample cone is spanned by the positive generator and there is nothing further to discuss. Therefore, we will assume $\Delta>\delta(\mu)$, and say $\xi$ has \emph{positive height}.

There is a nondegenerate symmetric bilinear form on the $K$-group $K(\PP^2)$ sending a pair of Chern characters $\xi, \zeta$ to the Euler Characteristic $\chi(\xi^*, \zeta)$.  When $\xi$ has positive height, the Picard group of the moduli space $M(\xi)$  is naturally identified with the orthogonal complement $\xi^\perp$ and is isomorphic to $\ZZ \oplus \ZZ$ \cite{LePotierLectures}.  Correspondingly, the N\'{e}ron-Severi space is a two-dimensional vector space.  In order to describe $\Amp(M(\xi))$, it suffices to specify its two extremal rays.

The moduli space $M(\xi)$ admits a surjective, birational morphism $j: M(\xi)\rightarrow M^{DUY}(\xi)$ to the Donaldson-Uhlenbeck-Yau compactification $M^{DUY}(\xi)$ of the moduli space of stable vector bundles (see \cite{JunLiDonaldson} and \cite{HuybrechtsLehn}).  As long as the locus of singular (i.e., non-locally-free) sheaves  in $M(\xi)$ is nonempty (see Theorem \ref{thm-singular}), the morphism $j$ is not an isomorphism and contracts curves (see Proposition \ref{prop-DUY}).  Consequently, the line bundle $\mathcal{L}_1$ defining $j$  is base-point-free but not ample (see \cite{HuybrechtsLehn}).  It corresponds to a Chern character $u_1\in \xi^\perp \cong \Pic M(\xi)$ and spans an extremal ray of $\Amp(M(\xi))$.
For all the characters $\xi$ that we will consider in this paper there are singular sheaves in $M(\xi)$, so one edge of $\Amp(M(\xi))$ is always spanned by $u_1$.  We must compute the other edge of the cone, which we call the {\em primary edge}.

We now state our results.  Let $\xi = (r, \mu, \Delta)$ be a stable Chern character. Let  $\xi'= (r', \mu', \Delta')$ be the  stable Chern character satisfying the following defining properties:
\begin{itemize}
\item  $0< r' \leq r$ and  $\mu'< \mu$,
\item  Every rational number in the interval $(\mu', \mu)$ has denominator greater than $r$,
\item  The discriminant of any stable bundle of slope $\mu'$ and rank at most $r$ is at least $\Delta'$,
\item  The minimal rank of a stable Chern character with slope $\mu'$ and discriminant $\Delta'$ is $r'$.
\end{itemize} The character $\xi'$ is easily computed using Dr\'ezet and Le Potier's classification of stable bundles.  

\begin{theorem}\label{thm-asymptotic}
Let $\xi = (r,\mu,\Delta)$ be a positive height Chern character such that $r$ and $c_1$ are coprime.  Suppose $\Delta$ is sufficiently large, depending on $r$ and $\mu$.  The cone $\Amp(M(\xi))$ is spanned by $u_1$ and a negative rank character in $(\xi')^\perp$.
\end{theorem}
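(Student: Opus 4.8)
The plan is to use the Bayer--Macr\`i positivity map to translate the computation of $\Nef(M(\xi))$ (whose closure determines $\Amp(M(\xi))$, since $\Pic M(\xi)\cong\ZZ^2$) into a wall-crossing computation in the space $\Stab(\PP^2)$ of Bridgeland stability conditions. Recall that to each stability condition $\sigma$ lying in the chamber for which $M_\sigma(\xi)=M(\xi)$, the positivity lemma of \cite{BayerMacri2} assigns a nef divisor class $\ell_\sigma$ on $M(\xi)$, and $\ell_\sigma$ is ample precisely when $\sigma$ lies in the interior of this \emph{Gieseker chamber}. As $\sigma$ traverses the closure of the Gieseker chamber, $\ell_\sigma$ sweeps out $\Nef(M(\xi))$, so the two walls bounding the chamber map to the two extremal rays. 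One of these rays is spanned by $u_1$, coming from the contraction $j$ to the Donaldson--Uhlenbeck--Yau space as recorded above. It remains to identify the \emph{primary edge}, which is the image of the other bounding wall.

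First I would identify that other wall. Working in the standard $(s,t)$-slice of $\Stab(\PP^2)$, the potential walls for $\xi$ are nested semicircles, each cut out by a candidate destabilizing subobject, and the Gieseker chamber is bounded by the \emph{largest actual wall} $W$: the semicircle on which a genuine short exact sequence $0\to F\to E\to Q\to 0$ of $\sigma$-semistable objects first destabilizes the generic $E\in M(\xi)$. The claim is that for $\Delta\gg 0$ this destabilizing subobject has Chern character $\xi'$. The four defining properties of $\xi'$ are tailored to this: the slope condition $\mu'<\mu$ together with the denominator condition singles out the slope $\mu'$ immediately to the left of $\mu$ that is actually available to a subobject of rank at most $r$, while the discriminant and minimal-rank conditions pin down the smallest, least-discriminant (hence wall-maximizing) such subobject.

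To prove that $\xi'$ produces the outermost wall, I would bound the radii of all competing semicircular walls. The radius of the wall attached to a subobject of Chern character $\zeta$ is controlled by the slope gap $\mu-\mu(\zeta)$ and the discriminants $\Delta(\zeta),\Delta(\xi/\zeta)$; the Bogomolov inequality and the Dr\'ezet--Le Potier classification force every stable $\zeta$ of rank $\le r$ to have discriminant bounded below, and the denominator condition prevents any slope strictly between $\mu'$ and $\mu$ from occurring. A direct estimate then shows that, once $\Delta$ is large enough depending only on $r$ and $\mu$, the $\xi'$-wall strictly dominates every other candidate, so $W$ is exactly the $\xi'$-wall. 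I expect this domination estimate to be the main obstacle: it requires simultaneously controlling infinitely many candidate subobjects and exploiting the coprimality of $r$ and $c_1$ to rule out strictly semistable degenerations on the Gieseker side, which is what keeps $W$ a genuine (rather than fake) wall and keeps the positivity map well behaved.

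Finally I would compute the extremal class. For $\sigma$ on $W$ the central charges $Z(\xi)$ and $Z(\xi')$ become parallel, so the nef class $\ell_\sigma$ --- built from the imaginary part of a ratio of central charges in the positivity lemma --- is orthogonal, with respect to the Euler pairing on $K(\PP^2)$, to both $\xi$ and $\xi'$. Hence $\ell_\sigma$ spans the line $\xi^\perp\cap(\xi')^\perp$, which is one-dimensional in $K(\PP^2)\otimes\QQ$ and lies in $\Pic M(\xi)\cong\xi^\perp$; this is the primary edge. A short computation of the rank component of a generator of $\xi^\perp\cap(\xi')^\perp$, using the explicit triples $(r,\mu,\Delta)$ and $(r',\mu',\Delta')$, shows the generator may be taken to have negative rank, completing the identification.
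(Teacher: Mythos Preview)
Your overall framework is exactly the paper's: use the Bayer--Macr\`i positivity map, identify one edge as $u_1$, and compute the Gieseker wall to get the other edge. But the proposal has two genuine gaps.

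First, your upper-bound argument on the Gieseker wall only treats candidate subobjects $\zeta$ of rank at most $r$. In the tilted heart $\cA_s$, a destabilizing subobject $F\hookrightarrow E$ can have rank \emph{larger} than $\rk(\xi)$ (the quotient then has a nonzero $\mathrm{H}^{-1}$), and nothing in the Dr\'ezet--Le Potier classification or the denominator condition rules this out. The paper handles this with a separate estimate (its Proposition~\ref{prop-highRank}): from the exact sequence of sheaves $0\to K\to F\to E\to C\to 0$ and the constraints $F\in\cQ_s$, $K\in\cF_s$ along the wall, one extracts a bound $\rho^2\le \frac{r^2}{2(r+1)}\Delta(\xi)$ for any higher-rank wall, which is eventually beaten by the quadratic growth of $\rho_{\Xi}^2$ in $\Delta$. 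Without something like this, your ``direct estimate'' does not cover all candidates. (There is a smaller related gap: the subobject $F$ along a genuine wall is only $\sigma$-semistable, not a priori Gieseker- or $\mu$-semistable, so applying the Dr\'ezet--Le Potier discriminant bounds to it requires an argument; the paper first forces $\mu(F)=\mu(\xi')$ via the right-endpoint of $W(\Xi)$ and then deduces $\mu$-semistability of $F$.)

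Second, and more seriously, you never show that the $\xi'$-wall is an \emph{actual} wall, nor that the Bayer--Macr\`i class on it is extremal. Your sentence ``As $\sigma$ traverses the closure of the Gieseker chamber, $\ell_\sigma$ sweeps out $\Nef(M(\xi))$'' is exactly what needs proof: $\ell_{\sigma_0}$ on the wall is only known to be nef, and it is extremal iff there is a curve $C\subset M(\xi)$ of $\sigma_0$-$S$-equivalent sheaves. The paper spends Sections~\ref{sec-hp}--\ref{sec-curves} on this: one must show that for general $F\in M(\xi')$ and $Q\in M(\xi-\xi')$ the general extension $0\to F\to E\to Q\to 0$ is Gieseker (in fact $\mu$-) stable, and that varying the extension class gives a nonconstant family in $M(\xi)$. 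This is where coprimality of $r$ and $c_1$ is actually used---it upgrades generic $\mu$-semistability of such extensions to $\mu$-stability (the paper's Corollary~\ref{cor-slopeStable})---not, as you suggest, to ``keep $W$ genuine.'' Without these curves you have only $\ell_{\sigma_0}\in\Nef(M(\xi))$, not that it spans an edge.
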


The required lower bound on $\Delta$ can be made explicit; see Remark \ref{rem-explicit}. Our second result computes the ample cone of small rank moduli spaces.

\begin{theorem}\label{thm-smallRank}
Let $\xi = (r,\mu,\Delta)$ be a positive height Chern character with $r\leq 6$.  
\begin{enumerate}
\item If $\xi$ is not a twist of $(6,\frac{1}{3},\frac{13}{18})$, then $\Amp(M(\xi))$ is spanned by $u_1$ and a negative rank character in $(\xi')^\perp$.
\item If $\xi = (6,\frac{1}{3},\frac{13}{18})$, then $\Amp(M(\xi))$ is spanned by $u_1$ and a negative rank character in $(\ch \OO_{\P^2})^\perp$.
\end{enumerate}
\end{theorem}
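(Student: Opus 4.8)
The edge of $\Amp(M(\xi))$ spanned by $u_1$ is already in hand: by Theorem \ref{thm-singular} the locus of non-locally-free sheaves is nonempty for every character under consideration, so by Proposition \ref{prop-DUY} the DUY morphism $j$ contracts curves and $u_1$ spans an extremal ray. The whole content is therefore the primary edge, and the plan is to compute it via the Bayer--Macr\`i correspondence \cite{BayerMacri2} between Bridgeland stability and birational geometry. On $\P^2$ one has the usual two--real--parameter family of geometric stability conditions $\sigma_{s,t}$, and the large--volume limit $t\to\infty$ recovers Gieseker stability, so the Gieseker chamber of $\Stab$ produces $M(\xi)$. The positivity lemma supplies a map $\ell$ from stability space to $\NS(M(\xi))_\R\cong(\xi^\perp)_\R$ sending each $\sigma$ to a nef class that is ample exactly when $\sigma$ lies in the interior of its chamber. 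Hence $\Amp(M(\xi))$ is the image under $\ell$ of the interior of the Gieseker chamber, and the primary edge is the image of the wall bounding that chamber away from the DUY side, namely the \emph{largest actual wall} for $\xi$. The entire problem reduces to identifying that wall.

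Next I would analyze the wall structure. For fixed $\xi$ the potential walls in the $(s,t)$--plane are disjoint, nested semicircles centered on the $s$--axis, each cut out by the numerical condition that a candidate destabilizing character $\zeta$ share the Bridgeland slope of $\xi$. A wall is larger the closer $\zeta$ comes, in slope and discriminant, to being a rank--$\leq r$ subobject of slope just below $\mu$; the four defining properties of $\xi'$ are precisely engineered to single out this outermost candidate. I would therefore compute the semicircular wall $W_{\xi'}$ determined by $\xi'$ and verify that the Bayer--Macr\`i class $\ell_\sigma$, for $\sigma\in W_{\xi'}$, pairs to zero with $\xi'$ under the Euler form: along the wall the contracted curves are the $S$--equivalences produced by extensions involving objects of character $\xi'$, which forces $\ell_\sigma$ into $\xi^\perp\cap(\xi')^\perp$. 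This intersection is one--dimensional, and choosing its generator of negative rank gives the asserted primary edge of part (1).

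The crux, and the main obstacle, is showing that $W_{\xi'}$ is genuinely the Gieseker wall. This has two halves: first, that $W_{\xi'}$ is an \emph{actual} wall, i.e.\ there exist honest Bridgeland--stable objects of character $\xi'$ destabilizing objects of character $\xi$ along it; and second, that no competing character $\zeta$ with $0<\rk(\zeta)\leq r$ (nor its complementary quotient) produces a strictly larger semicircle. Because $r\leq 6$, the admissible ranks, slopes, and discriminants of subobjects are tightly constrained, and the Dr\'ezet--Le Potier classification bounds the available discriminants, so this becomes a finite case analysis over destabilizing ranks: any putatively larger wall must either violate the slope/denominator constraints defining $\xi'$ or force a discriminant too large to support an actual destabilizer. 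I expect this enumeration to isolate the single sporadic character $\xi=(6,\tfrac{1}{3},\tfrac{13}{18})$, for which the structure sheaf $\OO_{\P^2}$ (of slope $0$) yields a wall strictly larger than $W_{\xi'}$. In that case I would compute $W_{\ch\OO_{\P^2}}$ directly, confirm it is actual, and conclude that the primary edge lies on $\xi^\perp\cap(\ch\OO_{\P^2})^\perp$, which gives part (2).
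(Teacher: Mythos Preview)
Your overall strategy is the paper's: handle the $u_1$ edge via the DUY contraction, then compute the primary edge by showing that the Bayer--Macr\`i nef class on the Gieseker wall $W_{\max}$ is extremal and identifying $W_{\max}$ with the wall $W(\xi',\xi)$ (or $W(\ch\OO_{\P^2},\xi)$ in the sporadic case). However, two genuinely hard steps are glossed over, and your sketch does not contain the ideas needed to complete them.

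First, you restrict attention to destabilizing subobjects $F$ with $0<\rk(F)\leq r$ without justification. This is not automatic: along a wall the destabilizing $F$ is a priori only an object of $\cA_s$, and even after one checks it is a torsion-free sheaf there is no obvious bound on its rank. The paper establishes this via a key inequality (Proposition~\ref{prop-highRank}): if $\rk(F)>\rk(E)$ then the radius of the wall satisfies $\rho^2\leq \tfrac{r^2}{2(r+1)}\Delta(\xi)$, obtained by playing the constraints $F\in\cQ_s$ and ${\rm H}^{-1}(Q)\in\cF_s$ against each other along the wall. One then checks numerically, for each $(r,\mu)$ with $r\leq 6$, that $W(\xi',\xi)$ is already at least this large (Table~\ref{table-discValues}). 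This is precisely where the character $(6,\tfrac{1}{3},\tfrac{13}{18})$ first fails and must be treated separately. Your ``finite case analysis over destabilizing ranks'' does not indicate this mechanism; without it, higher-rank walls cannot be excluded. Moreover, even once ranks are bounded, the paper does not enumerate competitors directly: it proves $\mu(F)>\mu(\xi')$ via two Euler-form inequalities (pairing $\theta'$ with a character $\gamma$ spanning $\langle\xi',\xi\rangle^\perp$ and with $\ch\OO_{\P^2}(-1)$ or $\ch\OO_{\P^2}(-3)$), which together force the slope past $\mu(\xi')$ and contradict slope-closeness.

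Second, you assert that $W(\xi',\xi)$ is an actual wall and that curves of $S$-equivalent objects exist along it, but this is the content of the bulk of the paper (\S\S\ref{sec-hp}--\ref{sec-curves}). One must show that for general $F\in M(\xi')$ and $Q\in M(\xi'')$ the general extension $0\to F\to E\to Q\to 0$ is Gieseker (in fact $\mu$-) stable, so that $E\in M(\xi)$, and that varying the extension class yields nonconstant curves in $M(\xi)$. Generic $\mu$-semistability is elementary, but upgrading to $\mu$-stability when $\gcd(r,c_1)>1$ requires a case-by-case analysis of the minimal extremal triples (Table~\ref{table-slopeClose}), including ad hoc arguments for three ``complete'' triples and a delicate Shatz-stratum codimension count for $\xi=(6,\tfrac{1}{2},\tfrac{17}{24})$. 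Obtaining curves then requires $\Hom(F,Q)=0$ and $-\chi(\xi'',\xi')\geq 2$, checked triple by triple. Several further sporadic characters (those with $\mu=\tfrac{1}{r}$ and minimal positive-height discriminant) do \emph{not} admit curves from the extremal triple and need the modified decomposition with $\xi'=(2,0,0)$; the wall is unchanged, so the statement of the theorem is unaffected, but your proposal does not anticipate this.
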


The new ingredient that allows us to calculate $\Amp(M(\xi))$ is Bridgeland stability. Bridgeland \cite{bridgeland:stable}, \cite{Bridgeland} and Arcara, Bertram \cite{ArcaraBertram} construct Bridgeland stability conditions on the bounded derived category of coherent sheaves on a projective surface.  On $\P^2$, these stability conditions $\sigma_{s,t} = (\cA_s, Z_{s,t})$ are parameterized by a half plane $H := \{ (s,t) | s,t \in \R, t>0\}$ (see \cite{ABCH} and \S \ref{sec-prelim}). Given a Chern character $\xi$, $H$ admits a finite wall and chamber decomposition, where in each chamber the collection of $\sigma_{s,t}$-semistable objects with Chern character $\xi$ remains constant.  These walls are disjoint and consist of a vertical line $s = \mu$ and nested semicircles with center along $t=0$ \cite{ABCH}.  In particular, there is a largest semicircular wall $W_{\max}$ to the left of the vertical wall.  We will call this wall the {\em Gieseker wall}.  Outside this wall, the moduli space $M_{\sigma_{s,t}}(\xi)$ of $\sigma_{s,t}$-semistable objects is isomorphic to the Gieseker moduli space $M(\xi)$ \cite{ABCH}.

Let $\sigma_0$ be a stability condition on the Gieseker wall for $M(\xi)$.  Bayer and Macr\`i \cite{BayerMacri2} construct a nef divisor $\ell_{\sigma_0}$ on $M(\xi)$ corresponding to $\sigma_0$.  They also characterize the curves $C$ in $M(\xi)$ which have intersection number $0$ with $\ell_{\sigma_0}$, as follows: $\ell_{\sigma_0}.C=0$ if and only if two general sheaves parameterized by $C$ are $S$-equivalent with respect to $\sigma_0$ (that is, their Jordan-H\"older factors with respect to the stability condition $\sigma_0$ are the same). The divisor $\ell_{\sigma_0}$ is therefore an extremal nef divisor if and only if such a curve in $M(\xi)$ exists.  This divisor can also be constructed via the GIT methods of Li and Zhao \cite{LiZhao}.
 
In light of the results of Bayer and Macr\`i, our proofs of Theorems \ref{thm-asymptotic} and \ref{thm-smallRank}  amount to the computation of the Gieseker wall.  For simplicity, we describe our approach to proving Theorem \ref{thm-asymptotic}; the basic strategy for the proof of Theorem \ref{thm-smallRank} is similar.

\begin{theorem}\label{thm-introWall}
Let $\xi$ be as in Theorem \ref{thm-asymptotic}.  The Gieseker wall for $M(\xi)$ is the wall $W(\xi',\xi)$ where $\xi$ and $\xi'$ have the same Bridgeland slope.
\end{theorem}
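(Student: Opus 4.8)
The plan is to locate the Gieseker wall $W_{\max}$ by converting the problem into a numerical extremal problem and then solving it. Recall from \cite{ABCH} that the potential walls for $\xi$ to the left of the vertical line $s=\mu$ are nested semicircles centered on the $s$-axis. A direct computation with the central charge $Z_{s,t}$ shows that every such wall $W$, with center $(s_W,0)$ and radius $\rho_W$, satisfies $\rho_W^2=(\mu-s_W)^2-2\Delta$; equivalently, the power of the point $(\mu,0)$ with respect to each of these circles is the constant $2\Delta$. Hence the walls are totally ordered by the position of their centers, and a wall is larger precisely when its center lies further to the left. Thus $W_{\max}$ is the \emph{actual} wall whose center is smallest, and I must prove two things: (a) no actual wall has center strictly to the left of that of $W(\xi',\xi)$, and (b) $W(\xi',\xi)$ is itself an actual wall. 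A further computation with $Z_{s,t}$ expresses the center of the numerical wall $W(\zeta,\xi)$ attached to a character $\zeta=(r_\zeta,\mu_\zeta,\Delta_\zeta)$ with $\mu_\zeta<\mu$ as
\[ s_{W(\zeta,\xi)}=\frac{\mu+\mu_\zeta}{2}+\frac{\Delta-\Delta_\zeta}{\mu_\zeta-\mu}, \]
so that, writing $x=\mu-\mu_\zeta>0$ and $g(x,\Delta_\zeta):=\tfrac{x}{2}+\tfrac{\Delta-\Delta_\zeta}{x}=\mu-s_{W(\zeta,\xi)}$, pushing the center to the left amounts to \emph{maximizing} $g$ over all $\zeta$ that can genuinely destabilize a general $F\in M(\xi)$.

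For part (a) I would first argue that on the largest wall the destabilizing object may be taken to be a genuine subsheaf $S\hookrightarrow F$ of a general $F\in M(\xi)$: since $F$ is a sheaf with $H^{-1}(F)=0$, the long exact cohomology sequence of the destabilizing triangle in $\cA_s$ forces the destabilizer to be concentrated in degree zero, and adjacency of the Gieseker chamber lets me replace it by a saturated subsheaf of the same Bridgeland slope. A subsheaf of the torsion-free sheaf $F$ has rank $r_S$ with $1\le r_S\le r$, so its slope $\mu_S$ is rational of denominator at most $r$; as $\mu_S<\mu$, the second defining property of $\xi'$ gives $\mu_S\le\mu'$, i.e. $x=\mu-\mu_S\ge\mu-\mu'=:x'$. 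By the Dr\'ezet--Le Potier classification \cite{DLP}, \cite{LePotierLectures} together with the third and fourth defining properties, $\Delta_S\ge\Delta'$ when $\mu_S=\mu'$, and in general $\Delta_S$ is at least the minimal discriminant of a stable character of slope $\mu_S$ and rank $\le r$. Finally, membership of $S$ in the heart $\cA_{s_W}$ (checked at the top of the semicircle) forces $\mu_S>s_W$, equivalently $\tfrac{x^2}{2}<\Delta-\Delta_S$, so in particular $x<\sqrt{2\Delta}$. The remaining step is the optimization of $g(x,\Delta_S)$ under these constraints, and this is where the hypothesis that $\Delta$ be sufficiently large is essential: for large $\Delta$ the dominant term $\tfrac{\Delta-\Delta_S}{x}$ is strictly decreasing in $x$, and the gap produced by moving from $x'$ to the next admissible slope is of order $\Delta$ (bounded below via the Farey spacing $x-x'\ge 1/r^2$), which dominates the bounded corrections coming from the $\tfrac{x}{2}$ term and from the variation of the minimal discriminant. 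Together with the bound $x<\sqrt{2\Delta}$, which confines any competitor to a regime where $g$ is only of order $\sqrt{\Delta}$ while $g(x',\Delta')\sim\Delta/x'$, this forces the maximum to occur at $x=x'$, $\Delta_S=\Delta'$, i.e. at $\zeta=\xi'$.

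The main obstacle is precisely this optimization, and in particular controlling destabilizers that are \emph{not} subsheaves: objects of $\cA_s$ of negative or large rank, for which the reduction above and the bound $r_S\le r$ are unavailable. For these I would argue directly from the Bogomolov inequalities $\Delta(\zeta)\ge0$ and $\Delta(\xi-\zeta)\ge0$ for the Bridgeland-semistable factors, combined with the membership constraints for $\cA_s$, to confine the center of any such wall to the region $x<\sqrt{2\Delta}$, where, once $\Delta$ is large, $g$ is again of order $\sqrt{\Delta}$ and hence far below $g(x',\Delta')$; this is the most delicate part of the estimate. For part (b), I would exhibit $W(\xi',\xi)$ as an actual wall: using the classification of stable bundles I take a stable sheaf $F'$ of character $\xi'$ of the minimal rank $r'$ and a general stable $F''$ of the complementary character $\xi-\xi'$, and form a non-split extension $0\to F'\to F\to F''\to 0$, whose existence follows from a dimension count ($\ext^1(F'',F')>0$ by Riemann--Roch and vanishing of the relevant $\Ext^2$). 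For general choices $F$ is Gieseker-stable, hence lies in $M(\xi)$. Since $Z_{s,t}$ is additive and $\xi'$, $\xi$ have equal Bridgeland slope on $W(\xi',\xi)$, the same holds for $\xi''$, so $F$ is strictly $\sigma$-semistable on $W(\xi',\xi)$ while $\sigma$-stable just outside it; by \cite{BayerMacri2} the two Jordan--H\"older factors $F',F''$ then vary in a contracted curve. This shows $W(\xi',\xi)$ is an actual wall, and combined with (a) it is the leftmost actual wall, hence equals the Gieseker wall $W_{\max}$, proving the theorem.
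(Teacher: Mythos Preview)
Your overall strategy matches the paper's: bound any candidate wall $W(\Theta)$ against $W(\Xi)=W(\xi',\xi)$ by constraining the numerics of the destabilizing subobject, and exhibit $W(\Xi)$ as an actual wall via extensions. But the argument for part~(a) has a genuine gap at precisely the point you flag as ``the most delicate.''

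The problematic step is your reduction to a genuine subsheaf $S\hookrightarrow E$ with $1\le\rk S\le r$. The long exact cohomology sequence does force the destabilizing subobject $S$ to be a sheaf, but the induced map of sheaves $S\to E$ need \emph{not} be injective: its kernel is $K=H^{-1}(Q)\in\cF_s$, and one only has a four-term sequence $0\to K\to S\to E\to C\to 0$. In particular $\rk(S)$ can exceed $r$, so the Farey bound $\mu_S\le\mu'$ is not available for free, and ``adjacency of the Gieseker chamber'' does not let you replace $S$ by a saturated subsheaf giving the \emph{same} wall. Your proposed fix for high-rank $S$ --- Bogomolov plus the heart constraint $\mu_S>s_W$ --- does not yield $g=O(\sqrt\Delta)$: those inputs give only $x^2<2(\Delta-\Delta_S)$ and $\Delta_S\ge 0$, from which $g=\tfrac{x}{2}+\tfrac{\Delta-\Delta_S}{x}$ is still unbounded as $x\to 0$. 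The paper closes this by a direct radius estimate (Proposition~\ref{prop-highRank}): tracking ranks and $c_1$'s through the four-term sequence and using $K\in\cF_s$ along the entire wall gives $\rho_W^2\le\frac{r^2}{2(r+1)}\Delta$ whenever $\rk(S)>r$, which is what actually forces such walls to have $g=O(\sqrt\Delta)\ll g(x',\Delta')\sim\Delta/x'$.

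There is a second gap even once $\rk(S)\le r$ is established: you invoke the Dr\'ezet--Le Potier lower bound on $\Delta_S$, but $S$ is only a sheaf in $\cQ_s$, not a priori $\mu$-semistable, and without semistability $\Delta_S$ could be arbitrarily negative. The paper (Theorem~\ref{thm-main}) handles this by first enlarging $\Delta$ so that the right endpoint $x_\Xi^+$ of $W(\Xi)$ is so close to $\mu'$ that no slope of denominator $\le r$ lies in $[x_\Xi^+,\mu')$; then $S\in\cQ_s$ along $W(\Xi)$ forces $\mu_{\min}(S)\ge x_\Xi^+$, and combined with the bound $\mu_{\max}(S)\le\mu$ (Lemma~\ref{lem-boundsTrivial}) this squeezes every Harder--Narasimhan factor of $S$ to slope $\mu'$, so $S$ is $\mu$-semistable and discriminant-minimality applies.
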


There are two parts to the proof of this theorem.  First, we show that $W_{\max}$ is no larger than $W(\xi',\xi)$.  This is a numerical computation based on Bridgeland stability.  The key technical result (Theorem \ref{thm-excludeHighRank}) is that if a wall is larger than $W(\xi',\xi)$, then the rank of a destabilizing subobject corresponding to the wall is at most $\rk(\xi)$.  We then find that the extremality properties defining $\xi'$ guarantee that $W(\xi',\xi)$ is at least as large as any wall for $M(\xi)$ (Theorem \ref{thm-main}). 

In the other direction, we must show that $W(\xi',\xi)$ is an actual wall for $M(\xi)$.  Define a character $\xi'' = \xi-\xi'$.  Our next theorem produces a sheaf $E\in M(\xi)$ which is destabilized along $W(\xi',\xi)$.

\begin{theorem}\label{thm-introSt}
Let $\xi$ be as in Theorem \ref{thm-asymptotic}.  Fix general sheaves  $F\in M(\xi')$ and $Q\in M(\xi'')$.  Then the general sheaf $E$ given by an extension $$0\to F\to E\to Q\to 0$$ is Gieseker stable. Furthermore, we obtain curves in $M(\xi)$ by varying the extension class.
\end{theorem}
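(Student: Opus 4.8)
The plan is to establish stability of the general extension first and then to produce the curves by a dimension count. Throughout fix a stability condition $\sigma_0$ on the wall $W(\xi',\xi)$ of Theorem~\ref{thm-introWall}; by construction $\xi'$, $\xi''=\xi-\xi'$ and $\xi$ all have the same $\sigma_0$-slope, and taking $F\in M(\xi')$ and $Q\in M(\xi'')$ general we may assume both are $\sigma_0$-stable. Since $F$ and $Q$ are sheaves, any extension $E$ is automatically a sheaf with Chern character $\xi$, so the content of the first assertion is purely the stability of the general $E$. Note that $\mu'<\mu$, and (when $r''>0$) that $\mu$ is a weighted average of $\mu'$ and $\mu''$, so $\mu'<\mu<\mu''$; the degenerate case $r''=0$, where $Q$ is a torsion sheaf, is handled by the same method.

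The cleanest route to stability is through Bridgeland stability. As $F$ and $Q$ are $\sigma_0$-stable of equal $\sigma_0$-slope, the extension $E$ is $\sigma_0$-semistable with Jordan--H\"older factors exactly $F$ and $Q$. I would then cross $W(\xi',\xi)$ into the Gieseker chamber, where $M_{\sigma}(\xi)\cong M(\xi)$. On that side the $\sigma$-slopes of $F$ and $Q$ separate with $\mu_{\sigma}(F)<\mu_{\sigma}(E)$, so the inclusion $F\hookrightarrow E$ no longer destabilizes, and for a general extension class $E$ becomes $\sigma$-stable and hence Gieseker stable. The one thing that must be checked is that, for general $e\in\Ext^1(Q,F)$, the subobject $F\subset E$ is the only proper $\sigma_0$-semistable subobject of maximal slope. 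Equivalently, working with sheaves directly, if $G\subset E$ is saturated with $\mu(G)\ge\mu$, one sets $G_F=G\cap F$ and $\bar G=\im(G\to Q)$; the stability of $F$ (of slope $\mu'<\mu$) forces $\bar G\ne 0$, and the existence of such a $G$ constrains $e$ to lie in the kernel of a natural map $\Ext^1(Q,F)\to\Ext^1(\bar G,F/G_F)$. I expect this genericity step to be the main obstacle: one must show this map is nonzero for each of the finitely many numerical types $(G_F,\bar G)$ allowed by the slope bounds $\mu(G_F)\le\mu'$, $\mu<\mu(\bar G)\le\mu''$ and $\rk G\le r$, so that a general $e$ avoids all of their kernels. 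Here the defining properties of $\xi'$---the minimality of $r'$ and the fact that every rational number in $(\mu',\mu)$ has denominator greater than $r$---are exactly what restrict the possible $(G_F,\bar G)$ and keep the number of cases finite.

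For the final statement, varying $e$ gives a rational map $\mathbb{P}\Ext^1(Q,F)\to M(\xi)$, $e\mapsto E_e$, whose image consists of sheaves that are pairwise $S$-equivalent with respect to $\sigma_0$ (they share the Jordan--H\"older factors $F$ and $Q$); by Bayer--Macr\`i the divisor $\ell_{\sigma_0}$ therefore has degree zero on every complete curve in the image. It remains to check that the image is positive-dimensional. Since $\mu(Q)=\mu''>\mu'=\mu(F)$ with $F,Q$ stable, we have $\Hom(Q,F)=0$, so $\dim\Ext^1(Q,F)=\ext^2(Q,F)-\chi(Q,F)\ge -\chi(Q,F)$, and the numerics of the wall together with the hypothesis that $\Delta$ is sufficiently large force $-\chi(Q,F)\ge 2$. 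Finally, because $F$ and $Q$ are stable, simple, and nonisomorphic, distinct extension classes yield isomorphic sheaves only when they differ by a scalar, so $\mathbb{P}\Ext^1(Q,F)\to M(\xi)$ is generically injective; its image is a curve along which $\ell_{\sigma_0}$ vanishes, which both verifies that $W(\xi',\xi)$ is a genuine wall and exhibits the required curves in $M(\xi)$.
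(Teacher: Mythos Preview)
Your proposal has two genuine gaps.

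\textbf{Stability.} Your argument relies on Theorem~\ref{thm-introWall} to know that the region just outside $W(\xi',\xi)$ is the Gieseker chamber, but in the paper's logic Theorem~\ref{thm-introWall} is proved \emph{using} Theorem~\ref{thm-introSt}: one needs the stable extensions to show that $W(\xi',\xi)$ is an actual wall and that the Bayer--Macr\`i divisor there is extremal. Even granting the purely numerical half of Theorem~\ref{thm-introWall} (that $W_{\max}$ is no larger than $W(\xi',\xi)$), you would still need the step you yourself flag as the ``main obstacle'': that for general $e$ the only $\sigma_0$-destabilizing subobject is $F$. You do not carry this out. The paper avoids all of this by exploiting the coprime hypothesis directly: since $\gcd(r,c_1)=1$, $\mu$-semistable, $\mu$-stable, and Gieseker stable all coincide for $\xi$, so it suffices to show that \emph{any} non-split extension is $\mu$-semistable. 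This is done by an elementary argument on quotients (Proposition~\ref{prop-slopeSemistable}): if $E\to C$ is a destabilizing stable quotient, slope-closeness \ref{cond-slopeClose} forces $\mu(C)\leq\mu(F)$, and then stability of $F$ and $Q$ makes the map vanish or split the sequence. No genericity and no Bridgeland input are required.

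\textbf{Curves.} Your claim that ``$F$ and $Q$ stable, simple, and nonisomorphic'' suffices for $\P\Ext^1(Q,F)\to M(\xi)$ to be generically injective is false: one needs $\Hom(F,Q)=0$ as well, otherwise an isomorphism $E\cong E'$ need not carry $F$ to $F$ (see Lemma~\ref{lem-nonIsoExtensions}). Since $\mu(F)<\mu(Q)$, this vanishing is not automatic. The paper handles this by an elementary-modification trick (proof of Theorem~\ref{thm-curves}): given any nonzero $F\to Q$, choose $Q\to\OO_p$ so that the composite is surjective, pass to the modified $Q'$, and observe $\hom(F,Q')<\hom(F,Q)$. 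Iterating, one finds an extremal triple above the given one with $\Hom(F,Q)=0$; this is where ``$\Delta$ sufficiently large'' is actually used. Your dimension bound $-\chi(Q,F)\geq 2$ is correct, but it does not by itself give injectivity of the moduli map.
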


If $E$ is a Gieseker stable extension as in the theorem, then $E$ is strictly semistable with respect to a stability condition $\sigma_0$ on $W(\xi',\xi)$, and not semistable with respect to a stability condition below $W(\xi',\xi)$.  Thus $W(\xi',\xi)$ is an actual wall for $M(\xi)$, and it is the Gieseker wall.  Any two Gieseker stable extensions of $Q$ by $F$ are $S$-equivalent with respect to $\sigma_0$, so any curve $C$ in $M(\xi)$ obtained by varying the extension class satisfies $\ell_{\sigma_0}.C=0$.  Therefore, $\ell_{\sigma_0}$ spans an edge of the ample cone. Dually, $C$ spans an edge of the Mori cone of curves.

The natural analogs of Theorems \ref{thm-introWall} and  \ref{thm-introSt} are almost true when instead $\rk(\xi)\leq 6$ as in Theorem \ref{thm-smallRank};  some minor adjustments to the statements need to be made for certain small discriminant cases.  See Theorems \ref{thm-smallRankCurves}, \ref{thm-mainsmall}, \ref{thm-mainSporadic}, and Propositions \ref{prop-sporadic} and \ref{prop-sporadic2} for precise statements.  As the rank increases beyond $6$, these exceptions become more common, and many more ad hoc arguments are required when using current techniques.

Bridgeland stability conditions were effectively used to study the birational geometry of Hilbert schemes of points on $\PP^2$ in \cite{ABCH} and moduli spaces of rank 0 semistable sheaves in \cite{Woolf}. The ample cone of $M(\xi)$ was computed earlier for some special Chern characters.  The ample cone of the Hilbert scheme of points on $\PP^2$ was computed in  \cite{li} (see also \cite{ABCH}, \cite{Ohkawa}).  Str\o mme computed $\Amp(M(\xi))$ when the rank of $\xi$ is two and either $c_1$ or $c_2 - \frac{1}{4}c_1^2$ is odd  \cite{Stromme}.    Similarly, when the slope is $\frac{1}{r}$, Yoshioka \cite{Yoshioka} computed the ample cone of $M(\xi)$ and described the first flip. Our results contain these as special cases.  Bridgeland stability has also been effectively used to compute ample cones of moduli spaces of sheaves on other surfaces. For example,  see \cite{ArcaraBertram}, \cite{BayerMacri2}, \cite{BayerMacri3}, \cite{MYY1}, \cite{MYY2}  for K3 surfaces,  \cite{MM}, \cite{Yoshioka2}, \cite{YanagidaYoshioka} for abelian surfaces, \cite{Nuer} for Enriques surfaces, and \cite{BertramCoskun} for the Hilbert scheme of points on Hirzebruch surfaces and del Pezzo surfaces.

\subsection*{Organization of the paper}
In \S \ref{sec-prelim}, we will introduce the necessary background on $M(\xi)$ and Bridgeland stability conditions on $\PP^2$.  In \S \ref{sec-hp} and \S \ref{sec-extremal}, we study the stability of extensions of sheaves and prove the first statement in Theorem \ref{thm-introSt}. In \S \ref{sec-elementaryMod} and \ref{sec-smallRank}, we prove the analogue of the first assertion in Theorem \ref{thm-introSt} for $\rk(\xi) \leq 6$. In \S \ref{sec-curves}, we complete the proof of Theorem \ref{thm-introSt} (and its small-rank analogue) by constructing the desired curves of extensions. Finally, in \S \ref{sec-ample}, we compute the Gieseker wall,  completing the proofs of Theorems \ref{thm-asymptotic} and \ref{thm-smallRank}.

\subsection*{Acknowledgements} We are grateful to Daniele Arcara, Arend Bayer, Aaron Bertram, Lawrence Ein, Joe Harris, Brendan Hassett, Emanuele Macr\`{i} and Matthew Woolf for many useful conversations about the geometry of moduli spaces of semistable sheaves. 

\section{Preliminaries}\label{sec-prelim}
In this section, we recall basic facts concerning the classification of stable vector bundles on $\PP^2$ and Bridgeland stability.

\subsection{Stable sheaves on $\PP^2$}
Let $\xi$ be the Chern character of a (semi)stable  sheaf on $\PP^2$. We will call such characters {\em (semi)stable characters}. The classification of stable characters on $\PP^2$ due to Dr\'{e}zet and Le Potier is best stated in terms of the slope $\mu$  and the discriminant $\Delta$. Let  $$P(m)= \frac{1}{2}(m^2 + 3m +2)$$ denote the Hilbert polynomial of $\OO_{\PP^2}$. In terms of these invariants, the Riemann-Roch formula reads $$\chi(E,F) = \rk(E) \rk(F) ( P( \mu(F) - \mu(E)) - \Delta(E) - \Delta(F)).$$

An {\em exceptional  bundle} $E$ on $\PP^2$ is a stable bundle such that $\Ext^1(E,E)=0$. The exceptional bundles are rigid; their moduli spaces consist of a single reduced point \cite[Corollary 16.1.5]{LePotierLectures}. They are the stable bundles $E$ on $\PP^2$ with $\Delta(E) < \frac{1}{2}$ \cite[Proposition 16.1.1]{LePotierLectures}. Examples of exceptional bundles include line bundles $\OO_{\PP^2}(n)$ and the tangent bundle $T_{\PP^2}$. All exceptional bundles can be obtained from line bundles via  a sequence of mutations \cite{DrezetBeilinson}.  An {\em exceptional slope} $\alpha\in \Q$ is the slope of an exceptional bundle. If $\alpha$ is an exceptional slope, there is a unique exceptional bundle $E_\alpha$ with slope $\alpha$. The rank of the exceptional bundle is the smallest positive integer $r_{\alpha}$ such that $r_{\alpha} \alpha$ is an integer. The discriminant $\Delta_{\alpha}$ is then given by $$\Delta_{\alpha} = \frac{1}{2} \left( 1 - \frac{1}{r_{\alpha}^2}\right).$$  The set $\F E$ of exceptional slopes is well-understood (see \cite{DLP} and \cite{CoskunHuizengaWoolf}).

The classification of positive dimensional moduli spaces of stable vector bundles on $\PP^2$ is expressed in terms of a fractal-like curve $\delta$ in the $(\mu, \Delta)$-plane. For each exceptional slope $\alpha \in \F E$, there is an interval $I_{\alpha} = ( \alpha-x_{\alpha}, \alpha + x_{\alpha})$ with $$x_{\alpha} = \frac{3- \sqrt{5+8 \Delta_{\alpha}}}{2}$$ such that the function $\delta(\mu)$ is defined on $I_{\alpha}$ by the function $$\delta(\mu) = P(-|\mu - \alpha|) - \Delta_{\alpha}, \ \ \mbox{if} \ \ \alpha - x_{\alpha} < \mu < \alpha + x_{\alpha}.$$ The graph of $\delta(\mu)$ is an increasing concave up parabola on the interval $[\alpha - x_{\alpha}, \alpha]$ and a decreasing concave up parabola on the interval $[\alpha, \alpha + x_{\alpha}]$.  The function $\delta$ is invariant under translation by integers. The main classification theorem of Dr\'{e}zet and Le Potier is as follows.

\begin{theorem}[\cite{DLP}, \cite{LePotierLectures}]
There exists a positive dimensional moduli space of Gieseker semistable sheaves $M(\xi)$ with integral Chern character $\xi$ if and only  if  $\Delta \geq \delta(\mu)$. In this case, $M(\xi)$ is a normal, irreducible, factorial projective variety of dimension $r^2(2 \Delta -1) + 1$.
\end{theorem}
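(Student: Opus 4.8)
The plan is to separate the numerical content (the dimension formula and the existence threshold) from the geometric content (normality, irreducibility, factoriality, projectivity), since these rest on quite different inputs.

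For the dimension formula, I would argue via deformation theory. At a point $[E]$ represented by a stable sheaf $E$, the deformation space is $\Ext^1(E,E)$ and the obstructions lie in $\Ext^2(E,E)$, so $M(\xi)$ is smooth of dimension $\ext^1(E,E)$ wherever the obstructions vanish. Stability forces $\Hom(E,E)=\C$, and Serre duality on $\PP^2$ gives $\Ext^2(E,E)\cong\Hom(E,E(-3))^*$, which vanishes because any nonzero map from the stable sheaf $E$ of slope $\mu$ to $E(-3)$ of slope $\mu-3<\mu$ would produce a quotient of $E$ of too-small slope, contradicting stability. Hence the stable locus is smooth and $\chi(E,E)=1-\ext^1(E,E)$. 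Substituting $\mu(F)=\mu(E)$ and $\Delta(F)=\Delta(E)$ into the Riemann--Roch formula recorded above yields $\chi(E,E)=r^2(P(0)-2\Delta)=r^2(1-2\Delta)$, and therefore $\ext^1(E,E)=r^2(2\Delta-1)+1$, the claimed dimension.

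For the existence threshold the engine is the theory of exceptional bundles. To prove the \emph{only if} direction I would show that any stable $\xi$ with $\Delta<\delta(\mu)$ is forced to be exceptional, so its moduli space is a single reduced point and is not positive dimensional. Concretely, for $\mu\in I_\alpha=(\alpha-x_\alpha,\alpha+x_\alpha)$ the sub-$\delta$ inequality $\Delta+\Delta_\alpha<P(-|\mu-\alpha|)$ is exactly the statement that $\chi(F,E_\alpha)$ (or $\chi(E_\alpha,F)$, according to the sign of $\mu-\alpha$) is positive for a stable $F$ with these invariants. Slope comparisons then force $\Hom(F,E_\alpha)=0$, and Serre duality forces $\Ext^2(F,E_\alpha)=\Hom(E_\alpha,F(-3))^*=0$, so $\chi(F,E_\alpha)=-\ext^1(F,E_\alpha)\le 0$; the contradiction can only be escaped in the rigid boundary case, which pins $F$ down to $E_\alpha$ itself. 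The \emph{if} direction requires, for every $\xi$ with $\Delta\ge\delta(\mu)$, the construction of an actual stable bundle; here I would build bundles with the prescribed $(\mu,\Delta)$ using extensions and mutations of exceptional pairs, then verify stability by checking Hom and Ext vanishing against every exceptional bundle whose parabola could obstruct it. This constructive half, with its combinatorial bookkeeping over all exceptional slopes, is the delicate heart of the Dr\'ezet--Le Potier classification.

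For the geometric properties, the existence of $M(\xi)$ as a projective coarse moduli space follows from the Gieseker--Maruyama GIT construction realizing semistable sheaves as a projective quotient. Irreducibility I would obtain from smoothness of the stable locus (the $\Ext^2$ vanishing above) together with a connectedness argument propagating from the well-understood strata near exceptional bundles through deformations, and normality would then follow once the strictly semistable and singular loci are shown to have codimension at least two. I expect factoriality to be the main obstacle: it is genuinely Dr\'ezet's theorem and, unlike the formal dimension and projectivity statements, it requires an \'etale-local analysis of the GIT quotient near strictly semistable points via Luna slices and the representation theory of the stabilizer groups, in order to show that the local rings are unique factorization domains. Controlling the local singularity structure finely enough to conclude factoriality, alongside the exact constructive direction of the existence criterion, is where the real work lies.
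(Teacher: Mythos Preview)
The paper does not give its own proof of this theorem: it is stated as a background result and attributed to \cite{DLP} and \cite{LePotierLectures}, with no argument supplied beyond the surrounding exposition of the $\delta$-curve and exceptional bundles. So there is no in-paper proof to compare your proposal against.

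That said, your outline is a reasonable sketch of how the cited references actually proceed. The dimension computation via $\Ext^2(E,E)=0$ and Riemann--Roch is exactly the standard one. Your ``only if'' direction for the existence threshold is essentially the correct argument, though the phrasing ``forced to be exceptional'' is slightly off: what the $\chi$-positivity versus $\hom=\ext^2=0$ contradiction really shows is that no stable sheaf with $\Delta<\delta(\mu)$ exists \emph{other than} the exceptional bundle $E_\alpha$ itself, and its moduli space is a point. You correctly flag the constructive ``if'' direction and Dr\'ezet's factoriality theorem as the substantial parts. One point to tighten: your route to normality (``once the strictly semistable and singular loci are shown to have codimension at least two'') is not quite enough on its own---you need Serre's criterion, so in addition to regularity in codimension one you need the $S_2$ condition, which in practice comes from a local complete intersection or Cohen--Macaulay argument for the GIT quotient. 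Similarly, irreducibility in \cite{DLP} is obtained by a more concrete stratification argument rather than the smoothness-plus-connectedness heuristic you sketch.
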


\subsection{Singular sheaves on $\P^2$} For studying one extremal edge of the ample cone, we need to understand the locus of singular sheaves in $M(\xi)$. The following theorem, which is likely well-known to experts, characterizes the Chern characters where the locus of singular sheaves in $M(\xi)$ is nonempty. We include a proof for lack of a convenient reference.

\begin{theorem}\label{thm-singular}
Let $\xi= (r, \mu, \Delta)$ be an integral Chern character with $r>0$ and $\Delta \geq \delta(\mu)$. The locus of singular sheaves in $M(\xi)$ is empty if and only if $\Delta - \delta(\mu) < \frac{1}{r}$ and $\mu$ is not an exceptional slope.  
\end{theorem}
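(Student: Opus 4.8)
The plan is to relate a torsion-free sheaf to its reflexive hull and thereby translate the problem into the existence of semistable \emph{bundles} of prescribed discriminant. For a torsion-free $E$ of character $\xi=(r,\mu,\Delta)$ the double dual $E^{**}$ is locally free, and there is an exact sequence $0\to E\to E^{**}\to T\to 0$ with $T$ of finite length $\ell\geq 0$; the sheaf $E$ is singular precisely when $\ell\geq 1$. Since $T$ is supported in dimension zero, $E^{**}$ has the same rank and slope as $E$ and discriminant $\Delta(E^{**})=\Delta-\ell/r$. I will also use the standard facts (see \cite{HuybrechtsLehn}) that $E^{**}$ is Gieseker semistable whenever $E$ is, and, conversely, that a general length-$\ell$ elementary modification $E=\ker(V\twoheadrightarrow T)$ of a Gieseker semistable \emph{bundle} $V$ is again Gieseker semistable, with discriminant raised by $\ell/r$. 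Thus singular sheaves in $M(\xi)$ exist if and only if there is a semistable bundle of character $(r,\mu,\Delta-\ell/r)$ for some integer $\ell\geq 1$ that admits such a modification.

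The key numerical input is the minimal discriminant of a semistable bundle of rank $r$ and slope $\mu$. For any semistable bundle $V$ of slope $\mu$ the Jordan--H\"older factors $V_i$ are stable of slope $\mu$, and the additivity of $\ch$ gives the convex combination $\Delta(V)=\frac{1}{r}\sum_i r_i\,\Delta(V_i)$. If $\mu$ is not exceptional, none of the $V_i$ is exceptional, so by the Dr\'ezet--Le Potier theorem \cite{DLP} each $\Delta(V_i)\geq \delta(\mu)$, whence $\Delta(V)\geq\delta(\mu)$; conversely every integral character with $\Delta\geq\delta(\mu)$ is realized by a stable bundle. If $\mu=\alpha$ is exceptional, then $r_\alpha\mid r$ (since $r\mu\in\Z$), the polystable bundle $E_\alpha^{\oplus r/r_\alpha}$ attains the value $\Delta_\alpha$, and the same weighted-average argument shows there is no semistable bundle with discriminant strictly between $\Delta_\alpha$ and $\delta(\alpha)$.

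With these facts the ``empty'' direction is immediate: if $\mu$ is not exceptional and $E\in M(\xi)$ is singular, then $E^{**}$ is semistable with $\Delta-\ell/r=\Delta(E^{**})\geq\delta(\mu)$, so $1/r\leq \ell/r\leq\Delta-\delta(\mu)$, contradicting $\Delta-\delta(\mu)<1/r$. For the converse I split into two cases. When $\Delta-\delta(\mu)\geq 1/r$ (for any $\mu$), the integral character $(r,\mu,\Delta-1/r)$ satisfies $\Delta-1/r\geq\delta(\mu)$, hence is realized by a stable bundle $V$, and a general one-point elementary modification of $V$ yields a stable singular sheaf of character $\xi$. When $\mu=\alpha$ is exceptional and $\delta(\alpha)\leq\Delta<\delta(\alpha)+1/r$, no stable bundle of the needed discriminant exists, so I instead start from $E_\alpha^{\oplus r/r_\alpha}$: since this bundle and $\xi$ have equal rank and first Chern class, the number $\ell:=r(\Delta-\Delta_\alpha)$ is an integer, and it is positive because $\Delta\geq\delta(\alpha)>\Delta_\alpha$. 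I then realize the singular sheaf as a general length-$\ell$ modification of $E_\alpha^{\oplus r/r_\alpha}$.

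The main obstacle is exactly this last construction: one must show that a general length-$\ell$ elementary modification of the strictly semistable (polystable) bundle $E_\alpha^{\oplus r/r_\alpha}$ is Gieseker semistable, so that it defines a point of $M(\xi)$. Unlike the stable case, where stability is open and transparently preserved, here the modification must be chosen so as not to respect the direct-sum decomposition, and verifying that a general quotient destroys every potentially destabilizing subsheaf is the delicate point where a genuine analysis of elementary modifications is required.
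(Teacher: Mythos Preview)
Your outline for the ``empty'' direction and the case $\Delta-\delta(\mu)\geq \frac{1}{r}$ matches the paper closely. One small correction: the statement that $E^{**}$ is \emph{Gieseker} semistable whenever $E$ is does not appear in \cite{HuybrechtsLehn}; what is standard (and what the paper uses) is that $E^{**}$ is $\mu$-semistable. Fortunately your Jordan--H\"older argument only needs $\mu$-semistability, since the $\mu$-JH factors are $\mu$-stable of slope $\mu$, hence Gieseker stable, hence satisfy $\Delta(V_i)\geq \delta(\mu)$ when $\mu$ is not exceptional.

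The genuine gap is exactly where you flag it. Your second ``standard fact''---that a general length-$\ell$ elementary modification of a Gieseker semistable bundle is again Gieseker semistable---is not standard, is not in \cite{HuybrechtsLehn}, and is precisely the content of the theorem in the strictly semistable case. You have identified the obstacle correctly but not removed it: one must rule out every map $E_\alpha^{\oplus j}\hookrightarrow E_\alpha^{\oplus r/r_\alpha}$ (and more generally every semistable subsheaf of smaller discriminant) whose fiber image lands in the kernel of the chosen quotient, and this requires a concrete dimension count that you have not supplied.

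The paper handles this case differently, and the difference is what makes the argument go through. Writing $r=kr_\alpha^2+mr_\alpha$ with $0<m\leq r_\alpha$, one checks that the hypothesis $\delta(\alpha)\leq \Delta<\delta(\alpha)+\frac{1}{r}$ forces $\Delta=\Delta_\alpha+\frac{k+1}{r}$. Rather than performing $k+1$ modifications of $E_\alpha^{\oplus r/r_\alpha}$, the paper performs a \emph{single} modification of $E_\alpha^{\oplus m}\oplus G$, where $G$ is a $\mu$-stable bundle of character $(kr_\alpha^2,\alpha,\delta(\alpha))$ orthogonal to $E_\alpha$. The advantage is twofold: only one point is involved, and the constraint $m\leq r_\alpha$ allows one to control injections $E_\alpha\to E_\alpha^{\oplus m}$ by an explicit incidence calculation in the Segre variety $\P^{r_\alpha-1}\times\P^{m-1}\subset\P((E_\alpha^{\oplus m})_p)$. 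That calculation (the paper's Lemma~\ref{lem-Segre}) shows a general hyperplane in the fiber contains no $\P^{r_\alpha-1}$ fiber precisely when $m\leq r_\alpha$, which is exactly what the decomposition guarantees. Verifying that no other destabilizer occurs then reduces to showing any candidate must factor through $G$ or through $E_\alpha^{\oplus m}$, using $\hom(E_\alpha,G)=\hom(G,E_\alpha)=0$. Your multi-point modification of $E_\alpha^{\oplus r/r_\alpha}$ may well work, but establishing it would require an analogous fiberwise analysis for higher-codimension linear sections that you have not carried out.
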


\begin{proof}
Let $F$ be a singular sheaf in $M(\xi)$. Then $F^{**}$ is a $\mu$-semistable, locally free sheaf \cite[\S 8]{HuybrechtsLehn} with invariants $$\rk(F^{**}) = r, \ \ \mu(F^{**}) = \mu, \ \ \mbox{and} \ \ \Delta(F^{**}) \leq \Delta(F) - \frac{1}{r}.$$

Since the set $\R - \cup_{\alpha \in \F{E}} I_{\alpha}$ does not contain any rational numbers \cite{DLP}, \cite[Theorem 4.1]{CoskunHuizengaWoolf},  $\mu \in I_{\alpha}$ for some exceptional slope $\alpha$. Let $E_{\alpha}$ with invariants $(r_{\alpha}, \alpha, \Delta_{\alpha})$ be the corresponding exceptional bundle.

If $\Delta - \delta(\mu)< \frac{1}{r}$ and $F$ is a singular sheaf in $M(\xi)$, then $\Delta(F^{**})< \delta(\mu)$. If $\alpha > \mu$, then $\hom(E_{\alpha}, F^{**})>0$. If $\alpha< \mu$, then $\hom(F^{**}, E_{\alpha})>0$. In either case, these homomorphisms violate the  $\mu$-semistability of $F^{**}$, leading to a contradiction. Therefore, if $\Delta - \delta(\mu) < \frac{1}{r}$ and $\mu$ is not an exceptional slope, then the locus of singular sheaves in $M(\xi)$ is empty.

To prove the converse, we construct singular sheaves using elementary modifications. If $\Delta - \delta(\mu) \geq \frac{1}{r}$, then $\zeta = (r, \mu, \Delta - \frac{1}{r})$ is a stable Chern character. Let $G$ be a $\mu$-stable bundle in $M(\zeta)$, which exists by \cite[Corollary 4.12]{DLP}. Choose a point $p\in \PP^2$ and let  $G \rightarrow \OO_p$ be a general surjection. Then the kernel sheaf $F$ defined by 
$$0 \rightarrow F \rightarrow  G \rightarrow \OO_p \rightarrow 0$$ is a $\mu$-stable, singular sheaf with Chern character $\xi$ (see \S \ref{sec-elementaryMod} for more details on elementary modifications).  

We are reduced to showing that if $\mu = \alpha$ and $\Delta - \delta(\alpha) < \frac{1}{r}$, then the locus of singular sheaves in $M(\xi)$ is nonempty. Since  $c_1(E_{\alpha})$ and  $r_{\alpha}$ are coprime, the rank of any bundle with slope $\alpha$ is a multiple of $r_{\alpha}$. Write $$r= k r_{\alpha}^2 + m r_{\alpha}, \ \ 0 \leq k, \ 0 < m \leq r_{\alpha}.$$ By integrality, there exists an integer $N$ such that $\Delta - \frac{N}{r} = \Delta_{\alpha}$. Our choice of $k$ implies $$ \Delta=  \Delta_{\alpha} + \frac{k+1}{r}.$$

First, assume $k=0$. If $r'<r$, then $\Delta_{\alpha} + \frac{1}{r'} > \Delta_{\alpha} + \frac{1}{r}$. Consequently, the only Gieseker semistable sheaves of character $(r',\mu,\Delta')$ with $r'<r$ and $\Delta'<\Delta$ are semi-exceptional sheaves $E_{\alpha}^{\oplus \ell}$ with $\ell < m$. Let $F$ be a general elementary modification of the form $$0 \rightarrow F \rightarrow E_{\alpha}^{\oplus m} \rightarrow \OO_p \rightarrow 0.$$ Then $F$ is a $\mu$-semistable singular sheaf with Chern character $\xi$. If $F$ were not Gieseker semistable, then it would admit an injective map $\phi: E_{\alpha} \rightarrow F$. By Lemma \ref{lem-Segre} below, for a general surjection $\psi: E_{\alpha}^{\oplus m} \rightarrow \OO_p$, there does not exist an injection $E_{\alpha} \rightarrow E_{\alpha}^{\oplus m}$ which maps to $0$ under $\psi$. Composing $\phi$ with the maps in the exact sequence defining $F$, we get a contradiction. We conclude that $F$ is Gieseker semistable. This constructs singular sheaves when $k=0$.

Next assume $k>0$. If $m= r_{\alpha}$, then we can construct a singular sheaf in $M(\xi)$ as a $(k+1)$-fold direct sum of a semistable singular sheaf constructed in the case $k=0$, $m=r_\alpha$.  Hence, we may assume that $m< r_{\alpha}$. Let $G$ be a $\mu$-stable vector bundle with Chern character $$\zeta= \left(kr_{\alpha}^2, \alpha, \delta(\alpha)=\Delta_{\alpha} +  \frac{1}{r_{\alpha}^2}\right).$$ Note that $(\mu(\zeta),\Delta(\zeta))$ lies on the curve $\delta$, hence $\chi(E_{\alpha}, G) = \chi(G, E_{\alpha}) =0$. Every locally free sheaf in $M(\zeta)$ has a two-step resolution in terms of exceptional bundles orthogonal to $E_{\alpha}$ \cite{DLP}. Consequently,  $\hom(G, E_{\alpha})=0$. We also have $\hom(E_\alpha, G)=0$ by stability. 

Let $ \phi: E_{\alpha}^{\oplus m} \oplus G \rightarrow \OO_p$ be a general surjection and let $F$ be defined as the corresponding elementary modification 
$$0 \rightarrow F \rightarrow  E_{\alpha}^{\oplus m} \oplus G \rightarrow \OO_p \rightarrow 0.$$  We first check that the Chern character of  $F$ is $\xi$. Clearly, $\rk(F)=r$ and $\mu(F)= \alpha$. The discriminant equals $$ \Delta(F) = \frac{1}{r} \left( m r_{\alpha} \Delta_{\alpha} +  k r_{\alpha}^2 \left(\Delta_{\alpha} + \frac{1}{r_{\alpha}^2}\right)\right) + \frac{1}{r} = \Delta_{\alpha} + \frac{k+1}{r} = \Delta.$$ Hence, $F$ is a singular sheaf with the correct invariants. It remains to check that it is Gieseker semistable. Note that $F$ is at least $\mu$-semistable.

Suppose $\psi: U \rightarrow F$ is an injection from a Gieseker stable sheaf $U$ that destabilizes $F$. Since $F$ is $\mu$-semistable, $\mu(U) = \alpha$ and $\Delta(U) < \Delta$. Then we claim that either $U = E_{\alpha}$ or $\rk(U) > m r_{\alpha}$. Suppose $U \not= E_{\alpha}$ and  $\rk(U) = s r_{\alpha}$. Then $$\Delta = \Delta_{\alpha} + \frac{k+1}{r}> \Delta(U) \geq \Delta_{\alpha} + \frac{1}{s r_{\alpha}}.$$ Hence, $$ s > \frac{k r_{\alpha} + m}{k+1} > \frac{km + m} {k+1} = m.$$ 

If $U \neq E_{\alpha}$, composing $\psi$ with the inclusion to $E_{\alpha}^{\oplus m} \oplus G$ gives an injection $\psi' : U \rightarrow E_{\alpha}^{\oplus m} \oplus G$. Since $\rk (U) > m r_{\alpha}$, the projection to $G$ cannot be zero. Hence, we get a nonzero map $\vartheta : U \rightarrow G$.  Let $V = \im \vartheta$. We have $\rk V = \rk G$ by the $\mu$-stability of $G$.  We claim that $\vartheta$ is in fact surjective.  The quotient $G/V$ is $0$-dimensional by stability, and, if it is nonzero, then $$\Delta(U) < \Delta < \delta(\alpha) + \frac{1}{r}< \delta(\alpha) + \frac{1}{kr_\alpha^2} \leq \Delta(V).$$ This violates the stability of $U$, so $V = G$ and $\vartheta$ is surjective.  If $\rk(U) = \rk(G)$, then $U\cong G$ and $\psi'$ maps $U$ isomorphically onto $G \subset E_{\alpha}^{\oplus m} \oplus G$.  A general hyperplane in the fiber $(E_\alpha^{\oplus m} \oplus G)_p$ is transverse to $G_p$, so this contradicts the fact that $\phi \circ \psi' = 0$ and $\phi$ is general.  Suppose $\rk(U)>\rk(G)$, and write $$\rk(U) = kr_{\alpha}^2+nr_\alpha$$ with $0< n<m$.  Then we find $$\Delta_\alpha+\frac{k+1}{r} =\Delta>\Delta(U) \geq \Delta_\alpha + \frac{k+1}{\rk(U)},$$ contradicting $\rk(U) < r$. We conclude that if $U\neq E_\alpha$, then $U$ cannot destabilize $F$.

On the other hand, if $U = E_{\alpha}$, then by the semistability of $G$ the composition of $\psi'$ with the projection to $G$ must be $0$. A general hyperplane in the fiber $(E_\alpha^{\oplus m} \oplus G)_p$ intersects $(E_{\alpha}^{\oplus m})_p$ in a hyperplane $H\subset (E_\alpha^{\oplus m})_p$.  Since $m\leq r_\alpha$, Lemma \ref{lem-Segre} shows the composition of $\psi'$ with $\phi$ is nonzero, a contradiction.  We conclude that $F$ is Gieseker semistable.  
\end{proof}

\begin{lemma}\label{lem-Segre}
Let $E_{\alpha}$ be an exceptional bundle of rank $r_{\alpha}$. Let $H$ be a general codimension $c$ subspace of the fiber of $E_{\alpha}^{\oplus m}$ over a point $p$. Then there exists an injection $\phi: E_{\alpha} \rightarrow E_\alpha^{\oplus m}$ such that $\phi_p(E_{\alpha}) \subset H$ if and only if $c r_{\alpha} \leq m-1$.
\end{lemma}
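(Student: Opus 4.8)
The plan is to strip away the sheaf theory and reduce the statement to a transparent piece of fiberwise linear algebra, using exceptionality of $E_\alpha$ to pin down all homomorphisms $E_\alpha \to E_\alpha^{\oplus m}$. Since $E_\alpha$ is stable it is simple, so $\Hom(E_\alpha,E_\alpha)=\C\cdot\id$ and hence
$$\Hom(E_\alpha, E_\alpha^{\oplus m}) = \Hom(E_\alpha,E_\alpha)^{\oplus m} = \C^m,$$
where a tuple $a=(a_1,\dots,a_m)$ corresponds to $\phi_a=(a_1\id,\dots,a_m\id)$. Writing $E_\alpha^{\oplus m}=\C^m\otimes E_\alpha$, the map $\phi_a$ is $v\mapsto a\otimes v$, which is a fiberwise injective subbundle inclusion exactly when $a\neq 0$. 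Setting $V:=(E_\alpha)_p\cong\C^{r_\alpha}$ and identifying $(E_\alpha^{\oplus m})_p=\C^m\otimes V$, the image of the fiber map is $(\phi_a)_p(V)=\langle a\rangle\otimes V$. Thus an injection with $\phi_p(E_\alpha)\subset H$ exists if and only if there is a nonzero $a\in\C^m$ with $\langle a\rangle\otimes V\subset H$. (These $r_\alpha$-planes $\langle a\rangle\otimes V$, as $[a]$ ranges over $\P^{m-1}$, sweep out the cone over the Segre-type locus that gives the lemma its name.)

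Next I would rephrase the containment $\langle a\rangle\otimes V\subset H$ in terms of functionals. Write $H=\bigcap_{j=1}^{c}\ker f_j$ for a $c$-dimensional annihilator spanned by $f_1,\dots,f_c\in(\C^m\otimes V)^*$, and use the canonical isomorphism $(\C^m\otimes V)^*\cong\Hom(\C^m,V^*)$ to produce maps $F_j:\C^m\to V^*$. A one-line computation gives $f_j(a\otimes v)=(F_j(a))(v)$, so $f_j$ vanishes on $\langle a\rangle\otimes V$ precisely when $F_j(a)=0$ in $V^*$. Assembling these into
$$F=(F_1,\dots,F_c):\C^m\longrightarrow (V^*)^{\oplus c}\cong\C^{c r_\alpha},$$
the existence of the desired injection becomes the single condition $\ker F\neq 0$.

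Finally I would run the rank count. A general map $\C^m\to\C^{c r_\alpha}$ is injective exactly when $m\leq c r_\alpha$, so $\ker F\neq 0$ if and only if $m>c r_\alpha$, i.e. $c r_\alpha\leq m-1$. This yields both implications at once. When $c r_\alpha\leq m-1$ we have $\rk F\leq c r_\alpha<m$, forcing a kernel for \emph{every} such $H$, so a nonzero $a$ and the injection $\phi_a$ always exist. When $c r_\alpha\geq m$ the non-injective maps form the proper determinantal locus $\{\rk F<m\}$, so the general $H$ gives an injective $F$ with no admissible $a$, and no injection $\phi$ with $\phi_p(E_\alpha)\subset H$.

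The only genuinely delicate point is the passage from ``general codimension $c$ subspace $H$'' to ``general map $F$'': I would verify that $f\mapsto F_f$ is a linear isomorphism $(\C^m\otimes V)^*\xrightarrow{\sim}\Hom(\C^m,V^*)$, so that a general annihilator $\Lambda=\langle f_1,\dots,f_c\rangle$ produces independent general maps $F_1,\dots,F_c$ and hence a general $F$, and that $\ker F=\bigcap_j\ker F_j$ depends only on $\Lambda$, i.e. only on $H$. Everything else is a routine rank computation, requiring no input beyond the simplicity of $E_\alpha$.
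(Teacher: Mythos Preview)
Your proof is correct and takes a genuinely different route from the paper.  Both arguments begin identically: using simplicity of $E_\alpha$ to identify $\Hom(E_\alpha,E_\alpha^{\oplus m})$ with $\C^m$ and the fiber image of $\phi_a$ with $\langle a\rangle\otimes V$.  From there the paper proceeds geometrically, interpreting the projectivized images as the $\P^{r_\alpha-1}$ fibers of the Segre embedding $\P^{r_\alpha-1}\times\P^{m-1}\hookrightarrow\P^{mr_\alpha-1}$.  For the direction $cr_\alpha>m-1$ it sets up an incidence correspondence between fibers and codimension-$c$ linear subspaces and compares dimensions; for the direction $cr_\alpha\leq m-1$ it exhibits an explicit hyperplane containing a codimension-$r_\alpha$ family of fibers and iterates.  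You instead dualize: the isomorphism $(\C^m\otimes V)^*\cong\Hom(\C^m,V^*)$ converts the annihilator of $H$ into a linear map $F:\C^m\to\C^{cr_\alpha}$, and the question becomes whether a general such $F$ has nontrivial kernel, which is a one-line rank comparison.  Your approach is shorter and more elementary, handling both directions uniformly and in fact showing (as the paper also observes at the end) that when $cr_\alpha\leq m-1$ \emph{every} $H$ works, not just a general one.  The paper's approach has the virtue of making the Segre geometry explicit, which motivates the lemma's name, but is otherwise more labor-intensive.  Your treatment of the ``delicate point'' is adequate: the key observation is that $\ker F$ depends only on the span $\Lambda$ of the $f_j$, and the frame bundle over the Grassmannian of such $\Lambda$ surjects onto a dense open in $\Hom(\C^m,(V^*)^c)$, so genericity transfers.
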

\begin{proof}
For simplicity set $E=E_\alpha $ and $r =r_\alpha$. Let $S$ denote the Segre embedding of $\PP^{r-1} \times \PP^{m-1}$ in $\PP^{rm-1}$. Let $q_1, q_2$ denote the two projections from $S$ to $\PP^{r-1}$ and $\PP^{m-1}$, respectively. We will call a linear $\PP^{r-1}$ in $S$ contracted by $q_2$ a {\em  $\PP^{r-1}$ fiber}. 

Let $\phi: E \rightarrow E^{\oplus m}$ be an injection. Composing $\phi$ with the $m$ projections, we get $m$ morphisms $E\rightarrow E$. Since $E$ is simple, the resulting maps are all homotheties. Let $M = (\lambda_1 I \ \ \lambda_2 I \ \ \dots \ \ \lambda_m I)$ be the $r \times rm$ matrix, where $I$ is the $r\times r$ identity matrix and $\lambda_i$ are scalars. Let $\vec{x} = (x_1, \dots, x_r)^T$. Hence, $\phi_p(E)$ has the form $$M \vec{x} = (\lambda_1 x_1, \lambda_1 x_2, \dots, \lambda_1 x_r, \dots, \lambda_m x_1, \dots, \lambda_m x_r)^T.$$ If we projectivize, we see that the fibers $\PP (\phi_p(E))$ are  $\PP^{r -1}$ fibers contained in the Segre embedding of $\PP^{r -1} \times \PP^{m-1}$ in $\PP((E^{\oplus m})_p)$. Conversely, every  $\PP^{r-1}$ fiber in $S$ is obtained by fixing a point $(\lambda_1, \dots, \lambda_m) \in \PP^{m-1}$  and, hence, is the fiber of an injection $E \rightarrow E^{\oplus m}$.  The lemma thus reduces to the statement that a general codimension $c$ linear subspace of $\PP^{m r -1}$ contains a   $\PP^{r -1}$ fiber in $S$ if and only if $c r \leq m-1$.

Consider the incidence correspondence $$J = \{ (A, H) : H \cong \PP^{m r - 1 -c}, A \subset H \cap S \ \  \mbox{is a } \ \PP^{r-1} \  \mbox{fiber}\}.$$ Then the first projection $\pi_1$ maps $J$ onto $\PP^{m-1}$. The fiber of $\pi_1$ over a linear space $A$ is the set of codimension $c$ linear spaces that contain $A$, hence it is  isomorphic to the Grassmannian $G((m-1)r - c, (m-1)r)$. By the theorem on the dimension of fibers, $J$ is irreducible of dimension $(cr+1)(m-1) - c^2$. The second projection cannot dominate $G(mr-c, mr)$ if $\dim(J) < \dim(G(mr-c, mr)= c(mr-c)$. Comparing the two inequalities, we conclude that if $c r > m-1$, the second projection is not dominant. Hence, the general codimension $c$ linear space does not contain  a  $\PP^{r-1}$ fiber in $S$.  

To see the converse, we check that if  $r \leq m-1$, then a general hyperplane contains a codimension $r$ locus of linear $\PP^{r-1}$ fibers of $S$.  Consider the hyperplane $ H$ defined by $\sum_{i=1}^{r} Z_{(i-1)r + i} =0.$ Substituting the equations of the  Segre embedding, we see that $\sum_{i=1}^{r} \alpha_i x_i = 0$. Since this equation must hold for every choice of $x_i$, we conclude that $\alpha_1 = \cdots = \alpha_r =0$. Hence, the locus of  $\PP^{r-1}$ fibers of $S$ contained in $H$ is a codimension $r$ linear space in $\PP^{m-1}$.  A codimension $c$ linear space is the intersection of $c$ hyperplanes. Moreover,  the intersection of $c$ codimension $r$ subvarieties of $\PP^{m-1}$ is nonempty if $c r \leq m-1$. Hence, if $c r \leq m-1$, every codimension $c$ linear space contains a   $\PP^{r-1}$ fiber of $S$. This suffices to prove the converse. 
 \end{proof}

\subsection{The Picard group and Donaldson-Uhlenbeck-Yau compactification} Stable vector bundles with $\Delta = \delta(\mu)$  are called {\em height zero} bundles. Their moduli spaces have Picard rank one. The ample generator spans the ample cone and there is nothing further to discuss. 

For the rest of the subsection, suppose $\xi=(r,\mu,\Delta)$ is a \emph{positive height} Chern character, meaning $\Delta> \delta(\mu)$.  There is a  pairing on $K(\PP^2)$ given by $(\xi, \zeta) = \chi(\xi^*, \zeta)$. When $\Delta > \delta(\mu)$, Dr\'{e}zet proves that the Picard group of $M(\xi)$ is a free abelian group on two generators naturally identified with $\xi^{\perp}$ in $K(\PP^2)$ \cite{LePotierLectures}.  In $M(\xi)$, linear equivalence and numerical equivalence coincide and the N\'{e}ron-Severi space $\NS(M(\xi)) = \Pic(M(\xi)) \otimes \R$ is a two-dimensional vector space. In order to specify the ample cone, it suffices to determine its two extremal rays. 

In $\xi^\perp \cong \Pic(M(\xi))$ there is a unique character $u_1$ with $\rk(u_1) = 0$ and $c_1(u_1) = -r$.  The corresponding line bundle $\mathcal L_1$ is base-point-free and defines the Jun Li morphism $j: M(\xi) \to M^{DUY}(\xi)$ to the Donaldson-Uhlenbeck-Yau compactification \cite[\S 8]{HuybrechtsLehn}.

\begin{proposition}\label{prop-DUY}
Let $\xi= (r,\mu,\Delta)$ be a positive height character, and suppose that there are singular sheaves in $M(\xi)$. Then $u_1$ spans an extremal edge of $\Amp(M(\xi))$.
\end{proposition}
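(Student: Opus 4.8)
The plan is to prove that the class $u_1$ is nef but not ample. Since $\NS(M(\xi))$ is two-dimensional, its nef cone $\overline{\Amp(M(\xi))}$ is bounded by exactly two rays, and any nonzero class that is nef but not ample must lie on one of them; thus establishing these two properties for $u_1$ shows that it spans an extremal edge of $\Amp(M(\xi))$. The nefness is immediate from the fact, recalled above, that $\mathcal{L}_1$ is base-point-free, so $u_1 \in \Nef(M(\xi))$. The entire content of the argument is therefore to show that $\mathcal{L}_1$ fails to be ample, which I will do by exhibiting a curve $C \subset M(\xi)$ with $\mathcal{L}_1 \cdot C = 0$.

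Because $\mathcal{L}_1 = j^*A$ for an ample class $A$ on $M^{DUY}(\xi)$, the projection formula gives $\mathcal{L}_1 \cdot C = A \cdot j_*[C]$, so it suffices to produce a curve $C$ that $j$ contracts to a point. Here I would use the hypothesis that $M(\xi)$ contains singular sheaves. Recall that $j$ sends the $S$-equivalence class of a semistable sheaf $F$ to the class of its double dual $F^{**}$ together with the zero-cycle of the quotient $F^{**}/F$, so that any two semistable sheaves with isomorphic double duals and equal zero-cycles have the same image \cite[\S 8]{HuybrechtsLehn}. I therefore fix a singular sheaf $F_0 \in M(\xi)$, set $G = F_0^{**}$, and build a positive-dimensional family of stable sheaves sharing the double dual $G$ and a common zero-cycle by the elementary modification technique used in the proof of Theorem \ref{thm-singular}. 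Concretely, when $r \geq 2$ the surjections $G \twoheadrightarrow \OO_p$ form a projective space $\PP(\Hom(G,\OO_p)) \cong \PP^{r-1}$, and for general such surjections the kernels are Gieseker stable (by the Segre-variety argument of Lemma \ref{lem-Segre}) and pairwise non-isomorphic; any curve $C$ inside this $\PP^{r-1}$ then has all of its members in a single fiber of $j$. When $r = 1$ the moduli space is a Hilbert scheme of points, positive height forces the relevant zero-cycle to have length at least two, and the punctual locus of the Hilbert--Chow morphism supplies a contracted curve.

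For any such curve one has $j_*[C]=0$ and hence $\mathcal{L}_1 \cdot C = 0$, while $\mathcal{L}_1$ remains nef; consequently $\mathcal{L}_1$ is not ample and $u_1$ spans an extremal edge of $\Amp(M(\xi))$, as desired.

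I expect the crux to be the middle paragraph: one must check that the family of elementary modifications really sweeps out a positive-dimensional locus of distinct, Gieseker-stable points of $M(\xi)$, rather than collapsing. Stability of a general modification follows from the same incidence/Segre computation as in Lemma \ref{lem-Segre} and Theorem \ref{thm-singular}, and the non-isomorphism of the kernels attached to general distinct surjections is what forces positive dimension. One also needs the explicit description of the Donaldson--Uhlenbeck--Yau compactification to confirm that all members of the family, having a common double dual and zero-cycle, lie in one fiber of $j$; this is the step where the structure of $M^{DUY}(\xi)$ from \cite[\S 8]{HuybrechtsLehn} is essential.
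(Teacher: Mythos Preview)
Your approach is essentially the paper's: show $j$ contracts a curve by varying the surjection in an elementary modification, then conclude $\mathcal L_1$ is nef but not ample. There is one concrete gap. You fix an \emph{arbitrary} singular sheaf $F_0\in M(\xi)$, set $G=F_0^{**}$, and then consider kernels of surjections $G\twoheadrightarrow \OO_p$. But the quotient $G/F_0$ may have length $\ell>1$, in which case the kernel of a single $G\to\OO_p$ has discriminant $\Delta(\xi)-\tfrac{\ell-1}{r}\neq\Delta(\xi)$ and is not a point of $M(\xi)$. The paper sidesteps this by using not an arbitrary singular sheaf but the specific construction from Theorem~\ref{thm-singular}, which produces singular sheaves as kernels of $E_\alpha^{\oplus m}\oplus G\to\OO_p$ with length-one singularity. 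Positive-dimensionality of the resulting family in $M(\xi)$ then comes from the dimension count $\hom(E_\alpha^{\oplus m}\oplus G,\OO_p)=r$ against $\dim\Aut(E_\alpha^{\oplus m}\oplus G)\leq m^2+1$, rather than from a direct non-isomorphism claim (note also that Theorem~\ref{thm-singular} only yields Gieseker \emph{semistable} kernels, not stable ones, so ``pairwise non-isomorphic'' alone would not immediately give distinct points of $M(\xi)$). Once you replace your arbitrary $F_0$ by this explicit length-one construction and use the automorphism count, your argument becomes the paper's.
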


\begin{proof}
We show that $j$ contracts a curve in $M(\xi)$.  Two stable sheaves $E,E'\in M(\xi)$ are identified by $j$ if $E^{**} \cong (E')^{**}$ and the sets of singularities of $E$ and $E'$ are the same (counting multiplicity). The proof of Theorem \ref{thm-singular} constructs singular sheaves via an elementary modification that arises from a surjection $E= E_{\alpha}^{\oplus m} \oplus G \rightarrow \OO_p$. Here $m=0$ if $\Delta - \delta(\mu) \geq \frac{1}{r}$ or $\mu$ is not exceptional. Otherwise, $1 \leq m < r_{\alpha}$. Note that $\hom(E, \OO_p)= r$ and $\dim (\Aut(E)) = m^2 +1$ if $G\not= 0$ and $\dim (\Aut(E)) = m^2$ if $G=0$.  Hence, if $r>1$,  varying the surjection $E \rightarrow \OO_p$ gives a positive dimensional family of nonisomorphic  Gieseker semistable sheaves with the same singular support and double dual.  If instead $r = 1$ and $\Delta\geq 2$, then (up to a twist) $j$ is the Hilbert-Chow morphism to the symmetric product, and the result is still true.  
\end{proof}

\begin{corollary}\label{cor-DUY}
Let $\xi = (r,\mu,\Delta)$ be a positive height character.  If $\Delta$ is sufficiently large or if $r\leq 6$ then $u_1$ spans an edge of $\Amp(M(\xi))$.
\end{corollary}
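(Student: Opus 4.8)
The plan is to deduce the corollary from Proposition \ref{prop-DUY} together with the characterization of singular sheaves in Theorem \ref{thm-singular}. By Proposition \ref{prop-DUY}, it suffices to show that $M(\xi)$ contains singular sheaves, and by Theorem \ref{thm-singular} this holds unless $\mu$ is not an exceptional slope and $\Delta - \delta(\mu) < \frac{1}{r}$. Thus I only need to rule out this last possibility. When $\Delta$ is sufficiently large the inequality $\Delta - \delta(\mu) < \frac{1}{r}$ fails outright, so that case is immediate. It remains to treat $r \le 6$. If $\mu$ is exceptional, Theorem \ref{thm-singular} already guarantees singular sheaves, so I may assume $\mu$ is not exceptional and aim to prove $\Delta - \delta(\mu) \ge \frac{1}{r}$ for every positive height character.

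The first step for $r \le 6$ is to locate $\mu$. Since $r\mu \in \Z$, the slope $\mu$ has denominator at most $6$. I claim that any non-exceptional $\mu$ of denominator at most $6$ lies in an interval $I_n$ centered at an \emph{integer} $n$. Indeed $I_n = (n - x_0, n + x_0)$ with $x_0 = \frac{3-\sqrt5}{2}$, and a direct enumeration shows that the only rationals of denominator at most $6$ lying in the complementary gap $(n + x_0,\, n + 1 - x_0)$ are $n + \frac{2}{5}$, $n + \frac{1}{2}$, and $n + \frac{3}{5}$; each of these is an exceptional slope, of rank $5$, $2$, and $5$ respectively. Consequently every non-exceptional slope of denominator at most $6$ is within $x_0$ of an integer, so $\mu \in I_n$ and $\delta(\mu) = P(-|\mu - n|)$ with $\Delta_\alpha = 0$.

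The second step is an integrality computation showing that $\delta(\mu)$ already lies in the set of admissible discriminants for $(r,\mu)$. Writing $m = |c_1 - nr|$ so that $|\mu - n| = \frac{m}{r}$, one expands $r\delta(\mu) + \frac{c_1^2(r-1)}{2r}$, the quantity that must be integral in order for a discriminant to be that of an integral character of rank $r$ and first Chern class $c_1$. A short manipulation reduces its integrality to the congruence $c_1(c_1 - 1) + nr(n-1) \equiv 0 \pmod 2$, which holds automatically since both summands contain a product of consecutive integers. Hence $\delta(\mu)$ is itself an admissible discriminant; because the admissible discriminants for fixed $(r,\mu)$ form a coset of $\frac{1}{r}\Z$, the smallest admissible discriminant strictly larger than $\delta(\mu)$ is $\delta(\mu) + \frac{1}{r}$. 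Since $\xi$ has positive height, $\Delta > \delta(\mu)$, and therefore $\Delta \ge \delta(\mu) + \frac{1}{r}$. This is precisely the inequality needed to invoke Theorem \ref{thm-singular}, so $M(\xi)$ contains singular sheaves and the corollary follows.

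I expect the main obstacle to be the case analysis in the first step: one must be certain that the narrow intervals attached to exceptional slopes of rank $\ge 2$ cannot swallow a low-denominator non-exceptional slope, which is exactly what forces $\mu$ into an integer interval. This relies on the classification of small-rank exceptional bundles, namely that there are none of rank $3$, $4$, or $6$ and that the rank-$5$ slopes are precisely $\pm\frac{2}{5} \bmod 1$, and it is this structure that degrades as $r$ grows, explaining the restriction $r \le 6$.
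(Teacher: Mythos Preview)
Your proposal is correct and follows the same approach as the paper: reduce to Proposition~\ref{prop-DUY} and then invoke Theorem~\ref{thm-singular} to guarantee singular sheaves. The paper's own proof is a single sentence citing these two results and leaves the verification for $r\leq 6$ to the reader; you have carried out that verification explicitly (the location of non-exceptional small-denominator slopes in integer intervals $I_n$, and the integrality argument showing $\delta(\mu)$ is already an admissible discriminant so that positive height forces $\Delta \geq \delta(\mu)+\tfrac{1}{r}$), and your computations check out.
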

\begin{proof}
In either case, this follows from Theorem \ref{thm-singular} and Proposition \ref{prop-DUY}.
\end{proof}

\subsection{Bridgeland stability conditions on $\P^2$}

We now recall basic facts concerning Bridgeland stability conditions on $\PP^2$ developed in \cite{ABCH}, \cite{CoskunHuizenga} and \cite{HuizengaPaper2}.

A {\em Bridgeland stability condition} $\sigma$ on the bounded derived category $\cD^b(X)$ of coherent sheaves on a smooth projective variety $X$ is a pair $\sigma = (\cA, Z)$, where $\cA$ is the heart of a bounded $t$-structure and $Z$ is a group homomorphism $$Z: K(\cD^b(\PP^2)) \rightarrow \C$$ satisfying the following two properties.

\begin{enumerate}
\item (Positivity) For every object $0 \not= E \in \cA$, $Z(E) \in \{r e^{i \pi \theta} | r> 0, 0 < \theta \leq 1\}$. 
Positivity allows one to define the slope of a non-zero object in $\cA$ by setting $$\mu_Z(E) =  - \frac{\Re(Z(E))}{\Im(Z(E))}.$$ An object $E$  of $\cA$ is called {\em (semi)stable} if  for every proper subobject $F\subset E$ in $\cA$ we have $\mu_Z(F) < (\leq) \mu_Z (E)$. 

\item (Harder-Narasimhan Property) Every object of $\cA$ has a finite Harder-Narasimhan filtration. 
\end{enumerate}

Bridgeland \cite{Bridgeland} and Arcara and Bertram \cite{ArcaraBertram} have constructed Bridgeland stability conditions on projective surfaces. In the case of $\PP^2$, the relevant Bridgeland stability conditions have the following form. Any torsion-free coherent sheaf $E$ on $\PP^2$ has a Harder-Narasimhan filtration $$0= E_0 \subset E_1 \subset \cdots \subset E_n = E$$ with respect to the Mumford slope with semistable factors $\gr_i = E_i / E_{i-1}$ such that $$\mu_{\max}(E) = \mu(\gr_1) > \cdots > \mu(\gr_n) = \mu_{\min}(E) .$$ Given $s\in \R$, let $\cQ_s$ be the full subcategory of $\coh(\PP^2)$ consisting of sheaves such that their quotient by their torsion subsheaf have $\mu_{\min}(Q)> s$.  Similarly, let $\cF_s$  be the full subcategory of $\coh(\PP^2)$ consisting of  torsion free sheaves $F$ with $\mu_{\max}(F) \leq s$.  Then the abelian category  $$\cA_s := \{ E \in \cD^b(\PP^2) : \rH^{-1}(E) \in \cF_s, \rH^0(E) \in \cQ_s, H^i(E) = 0 \ \mbox{for} \ i \not= -1, 0 \}$$ obtained by tilting the category of coherent sheaves with respect to the torsion pair $(\cF_s, \cQ_s)$ is the heart of a bounded $t$-structure. Let $$Z_{s,t} (E) = - \int_{\PP^2} e^{-(s+it)H} \ch(E),$$ where $H$ is the hyperplane class on $\PP^2$. The pair $(\cA_s, Z_{s,t})$ is a Bridgeland stability condition for every $s > 0$ and $t \in \R$. We thus obtain a half plane of Bridgeland stability conditions on $\PP^2$ parameterized by $(s, t)$, $t>0$.

\subsection{Bridgeland walls}
If we fix a Chern character $\xi\in K(\P^2)$, the $(s,t)$-plane of stability conditions for $\P^2$ admits a finite wall and chamber structure where the objects in $\cA_s$ with Chern character $\xi$ that are stable with respect to the stability condition $(\cA_s, Z_{s,t})$ remain unchanged within the interior of a chamber (\cite{ABCH}, \cite{Bridgeland}, \cite{BayerMacri}, \cite{BayerMacri2}).  An object $E$ is destabilized along a wall  $W(E, F)$ by $F$  if $E$ is semistable on one side of the wall but $F \subset E$ in the category $\cA_s$ with $\mu_{s,t} (F) > \mu_{s,t}(E)$ on the other side of the wall.  We call these walls {\em Bridgeland walls}. The equations of the wall $W(E,F)$ can be computed using the relation  $\mu_{s,t} (F) = \mu_{s,t}(E)$ along the wall. 

Suppose $\xi,\zeta\in K(\P^2)\te \R$ are two linearly independent real Chern characters.  A \emph{potential Bridgeland wall} is a set in the $(s,t)$-half-plane of the form $$W(\xi,\zeta) = \{(s,t):\mu_{s,t}(\xi) = \mu_{s,t}(\zeta)\},$$ where $\mu_{s,t}$ is the slope associated to $\cZ_{s,t}$.  Bridgeland walls are always potential Bridgeland walls.  The \emph{potential Bridgeland walls for $\xi$} are all the potential walls $W(\xi,\zeta)$ as $\zeta$ varies in $K(\P^2)\te \R$.  If $E,F\in D^b(\P^2)$, we also write $W(E,F)$ as a shorthand for $W(\ch(E),\ch(F))$.

The potential walls $W(\xi,\zeta)$ can be easily computed in terms of the Chern characters $\xi$ and $\zeta $.  
\begin{enumerate} \item If $\mu(\xi) = \mu(\zeta)$ (where the Mumford slope is interpreted as $\infty$ if the rank is $0$), then the wall $W(\xi,\zeta)$ is the vertical line $s= \mu(\xi)$ (interpreted as the empty set when the slope is infinite).  
\item Otherwise, without loss of generality assume $\mu(\xi)$ is finite, so that $r\neq 0$.  The walls $W(\xi,\zeta)$ and $W(\xi,\xi+\zeta)$ are equal, so we may further reduce to the case where both $\xi$ and $\zeta$ have nonzero rank. Then we may encode $\xi = (r_1,\mu_1,\Delta_1)$ and $\zeta = (r_2,\mu_2,\Delta_2)$ in terms of slope and discriminant instead of $\ch_1$ and $\ch_2$.  The wall $W(\xi,\zeta)$ is the semicircle centered at the point $(s,0)$ with $$s = \frac{1}{2}(\mu_1+\mu_2)-\frac{\Delta_1-\Delta_2}{\mu_1-\mu_2}$$ and having radius $\rho$ given by $$\rho^2 = (s-\mu_1)^2-2\Delta_1.$$ 
\end{enumerate}

In the principal case of interest, the Chern character $\xi = (r,\mu,\Delta)$ has nonzero rank $r\neq 0$ and nonnegative discriminant $\Delta\geq 0$.  In this case, the potential walls for $\xi$ consist of a vertical wall $s=\mu$ together with two disjoint nested families of semicircles on either side of this line \cite{ABCH}.  Specifically, for any $s$ with $|s-\mu| > \sqrt{2\Delta}$, there is a unique semicircular potential wall with center $(s,0)$ and radius $\rho$ satisfying $$\rho^2 = (s-\mu)^2 - 2\Delta.$$ The semicircles are centered along the $s$-axis, with smaller semicircles having centers closer to the vertical wall.  Every point in the $(s,t)$-half-plane lies on a unique potential wall for $\xi$.  When $r>0$, only the family of semicircles left of the vertical wall is interesting, since an object $E$ with Chern character $\xi$ can only be in categories $\cA_s$ with $s<\mu$.

Since  the number of Bridgeland walls is finite, there exists a largest semicircular Bridgeland wall $W_{\max}$ to the left of the vertical line $s = \mu$ that contains all other semicircular walls. Furthermore, for every $(s,t)$ with $s< \mu$ and contained outside $W_{\max}$, the moduli space of Bridgeland stable objects in $\cA_s$ with respect to $Z_{s,t}$ and Chern character $\xi$ is isomorphic to the moduli space $M(\xi)$ \cite{ABCH}.  We call $W_{\max}$ the \emph{Gieseker wall}.

\subsection{A nef divisor on $M(\xi)$}\label{ssec-BayerMacriPlan} Let $(\cA,Z) = \sigma_0  \in W_{\max}$ be a stability condition on the Gieseker wall.  Bayer and Macr\`{i} \cite{BayerMacri2} construct a nef divisor $\ell_{\sigma_0}$ on $M(\xi)$ corresponding to $\sigma_0$.  They also compute its class and describe geometrically the curves $C\subset M(\xi)$ with $C \cdot \ell_{\sigma_0} = 0$.

To describe the class of $\ell_{\sigma_0}$ in $\xi^\perp \cong \Pic M(\xi)$, consider the functional \begin{align*}N^1(M(\xi)) & \to \R\\
\xi' &\mapsto \Im\left( -\frac{Z(\xi')}{Z(\xi)}\right).\end{align*} Since the pairing $(\xi,\zeta) = \chi(\xi\te \zeta)$ is nondegenerate, we can write this functional as $(\zeta,-)$ for some unique $\zeta\in \xi^\perp$.  In terms of the isomorphism $\xi^\perp \cong \Pic M(\xi)$, we have $\zeta = [\ell_{\sigma_0}].$  Considering $(\zeta,\ch \OO_p)$ shows that $\zeta$ has negative rank.  Furthermore, if $W_{\max} = W(\xi',\xi)$ (so that $Z(\xi')$ and $Z(\xi)$ are real multiples of one another), then $\zeta$ is a negative rank character in $(\xi')^\perp$.  The ray in $N^1(M(\xi))$ determined by $\sigma_0$ depends only on $W_{\max}$, and not the particular choice of $\sigma_0$.

A curve $C\subset M(\xi)$ is orthogonal to $\ell_{\sigma_0}$ if and only if two general sheaves parameterized by $C$ are $S$-equivalent with respect to $\sigma_0$.  This gives an effective criterion for determining when the Bayer-Macr\`i divisor $\ell_{\sigma_0}$ is an extremal nef divisor.  In every case where we compute the ample cone of $M(\xi)$, the divisor $\ell_{\sigma_0}$ is in fact extremal.

\section{Admissible decompositions}\label{sec-hp}

In this section, we introduce the notion of an admissible decomposition of a Chern character of positive rank. Each such decomposition corresponds to a potential Bridgeland wall. In the cases when we can compute the ample cone, the Gieseker wall will correspond to a certain admissible decomposition.

\begin{definition}\label{def-admissible} Let $\xi$ be a stable Chern character of positive rank.  A \emph{decomposition} of $\xi$ is a triple $\Xi = (\xi',\xi,\xi'')$ such that $\xi = \xi'+\xi''$.  We say $\Xi$ is an \emph{admissible decomposition} if furthermore \begin{enumerate}[label=(D\arabic*)]
\item \label{cond-Fstable} $\xi'$ is semistable,
\item \label{cond-Qstable} $\xi''$ is stable,
\item\label{cond-rank} $0 < \rk(\xi') \leq \rk(\xi)$, 
\item \label{cond-Fslope} $\mu(\xi') < \mu(\xi)$,  and
\item \label{cond-slopeDiff} if $\rk(\xi'')>0$, then $\mu(\xi'')-\mu(\xi')<3$.
\end{enumerate}
\end{definition}

\begin{remark} The Chern characters in an admissible decomposition $\Xi$ span a $2$-plane in $K(\P^2)$.  We write $W(\Xi)$ for the potential Bridgeland wall where characters in this plane have the same slope.

Condition \ref{cond-Fstable} means that $\xi'$ is either semiexceptional or stable.  We require $\xi''$ to be stable since this holds in all our examples and makes admissibility work better with respect to elementary modifications; see \S\ref{sec-elementaryMod}.
\end{remark}

There are a couple numerical properties of decompositions which will frequently arise.

\begin{definition}\label{def-numericalProps}
Let $\Xi = (\xi',\xi,\xi'')$ be a decomposition.  
\begin{enumerate}
\item $\Xi$ is \emph{coprime} if $\rk(\xi)$ and $c_1(\xi)$ are coprime.

\item $\Xi$ is \emph{torsion} if $\rk(\xi'')=0$, and \emph{torsion-free} otherwise.
\end{enumerate}
\end{definition}

The conditions in the definition of an admissible decomposition ensure that there is a well-behaved space of extensions of the form $$0\to F\to E\to Q\to 0$$ with $F\in M(\xi')$ and $Q\in M(\xi'')$.

\begin{lemma}\label{existenceOfExtensions}
Let $\Xi = (\xi',\xi,\xi'')$ be an admissible torsion-free decomposition.  We have $\chi(\xi'',\xi')<0$.  In particular, for any $F\in M(\xi')$ and $Q\in M(\xi'')$ there are non-split extensions $$0\to F\to E \to Q \to 0.$$ Furthermore, $\Ext^1(Q,F)$ has the expected dimension $-\chi(\xi'',\xi')$ for any $F\in M(\xi')$ and $Q\in M(\xi'')$.
\end{lemma}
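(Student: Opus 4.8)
The plan is to treat the numerical assertion $\chi(\xi'',\xi')<0$ separately from the homological statements, proving the former first and then deducing the existence of non-split extensions of the expected dimension from it.

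First I would translate $\chi(\xi'',\xi')$ into slope--discriminant data via the Riemann--Roch formula recorded in the preliminaries. Writing $\xi'=(r',\mu',\Delta')$ and $\xi''=(r'',\mu'',\Delta'')$, this gives
$$\chi(\xi'',\xi')=r'r''\bigl(P(\mu'-\mu'')-\Delta'-\Delta''\bigr),$$
so the sign of $\chi(\xi'',\xi')$ is governed by the comparison of $P(\mu'-\mu'')$ with $\Delta'+\Delta''$. Since $\mu$ is the rank-weighted average of $\mu'$ and $\mu''$ and $\rk(\xi'')>0$, condition \ref{cond-Fslope} forces $\mu'<\mu<\mu''$; together with \ref{cond-slopeDiff} this yields $u:=\mu''-\mu'\in(0,3)$. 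Using $P(-u)=\tfrac12(u-1)(u-2)$ one checks that $P(-u)<1$ throughout $(0,3)$ and that $P(-u)\le 0$ on $[1,2]$. Thus it suffices to prove the strict inequality $\Delta'+\Delta''>P(-u)$.

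The clean case is when neither factor is (semi)exceptional: since a stable character of discriminant less than $\tfrac12$ is exceptional, both $\Delta'$ and $\Delta''$ are then at least $\tfrac12$, so $\Delta'+\Delta''\ge 1>P(-u)$. The remaining cases, in which at least one of $\xi',\xi''$ is exceptional or semiexceptional and hence has discriminant below $\tfrac12$ and slope at an exceptional value, are the delicate ones, and I expect them to be the main obstacle. Here I would combine the Bogomolov inequality for the stable character $\xi$, in the form of the discriminant-addition identity
$$r\,\Delta(\xi)=r'\Delta'+r''\Delta''-\frac{r'r''}{2r}\,u^2\ \ge\ 0,$$
with the explicit discriminants $\Delta_\alpha=\tfrac12(1-r_\alpha^{-2})$ of the exceptional bundles and the numerical relations among exceptional slopes. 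The point is that equality $\Delta'+\Delta''=P(-u)$ would make $\chi(\xi'',\xi')=0$; in the borderline configurations (for instance both factors direct sums of line bundles, forcing $u\in\{1,2\}$ and $\Delta'=\Delta''=0$) the identity above then yields $\Delta(\xi)<0$, contradicting the stability of $\xi$. Pushing this contradiction through the small-gap regime, where $P(-u)$ approaches $1$ while the factors have nearly equal slopes, is exactly where the fine structure of the curve $\delta$ near exceptional slopes must be used.

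Finally, for the homological conclusions I would prove the two vanishings $\Hom(Q,F)=0$ and $\Ext^2(Q,F)=0$ for every $F\in M(\xi')$ and $Q\in M(\xi'')$. The first is a slope comparison: the image of a nonzero map $Q\to F$ would be a quotient of the stable sheaf $Q$, hence of slope at least $\mu''$, and a subsheaf of the semistable sheaf $F$, hence of slope at most $\mu'$, contradicting $\mu'<\mu''$. For the second, Serre duality on $\P^2$ gives $\Ext^2(Q,F)\cong\Hom(F,Q(-3))^*$, and since $\mu(Q(-3))=\mu''-3<\mu'$ by \ref{cond-slopeDiff}, the same comparison forces $\Hom(F,Q(-3))=0$. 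With these vanishings, $\ext^1(Q,F)=-\chi(\xi'',\xi')$ for all such $F$ and $Q$; combined with $\chi(\xi'',\xi')<0$ this shows at once that non-split extensions exist and that $\Ext^1(Q,F)$ has the expected dimension $-\chi(\xi'',\xi')$.
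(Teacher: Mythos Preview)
Your overall structure matches the paper: first establish the vanishings $\Hom(Q,F)=0$ and $\Ext^2(Q,F)=0$ by slope comparison and Serre duality (using \ref{cond-slopeDiff}), then argue about the sign of $\chi(\xi'',\xi')$ via Riemann--Roch. Your treatment of the ``clean'' case where neither $\xi'$ nor $\xi''$ is semiexceptional is also the paper's argument verbatim.

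The divergence is in the (semi)exceptional case, and here your plan is both incomplete and unnecessarily hard. You propose to push through a Bogomolov-type inequality together with the fine structure of $\delta$ near exceptional slopes, but you do not actually carry this out, and the ``small-gap regime'' you flag is genuinely delicate if attacked head-on. The paper sidesteps all of this with a one-line trick: if $\xi'$ is semiexceptional, write
\[
\chi(\xi'',\xi')=\chi(\xi,\xi')-\chi(\xi',\xi').
\]
Since $\xi'$ is semiexceptional, $\chi(\xi',\xi')>0$. And $\chi(\xi,\xi')\le 0$ by \emph{exactly} the same vanishing argument you already gave, applied to the pair $(\xi,\xi')$ instead of $(\xi'',\xi')$: one has $\mu'<\mu$ and $\mu-\mu'<\mu''-\mu'<3$, so $\Hom(E,F)=\Ext^2(E,F)=0$ for $E\in M(\xi)$, $F\in M(\xi')$. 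Thus $\chi(\xi'',\xi')<0$. The case where $\xi''$ is exceptional is symmetric. This replaces the entire appeal to $\delta$ and the discriminant-addition identity with bilinearity of $\chi$ and the positivity of $\chi$ on a semiexceptional character paired with itself.
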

\begin{proof}
From \ref{cond-Fslope} and the torsion-free hypothesis, we have $\mu(\xi)<\mu(\xi'')$.  Let $F\in M(\xi')$ and $Q\in M(\xi'')$.  By stability, $\Hom(Q,F) = 0$.  Using Serre duality with condition \ref{cond-slopeDiff}, we have $\Ext^2(Q,F)=0$.  Therefore $\ext^1(Q,F) = -\chi(\xi'',\xi')$ and $\chi(\xi'',\xi')\leq 0$.

To prove $\chi(\xi'',\xi')<0$, first suppose $\xi'$ is semiexceptional.  Then $$\chi(\xi'',\xi')=\chi(\xi,\xi')-\chi(\xi',\xi')<\chi(\xi,\xi').$$ As in the previous paragraph, $\chi(\xi,\xi')\leq 0$, hence $\chi(\xi'',\xi')<0$.
 A similar argument works if $\xi''$ is semiexceptional.

Assume neither $\xi'$ or $\xi''$ is semiexceptional.  Then $-3<\mu(\xi')-\mu(\xi'')<0$  and $\Delta(\xi')+\Delta(\xi'')>1$.  Since $P(x)<1$ for $-3< x < 0 $, we conclude $\chi(\xi'',\xi')<0$ by the Riemann-Roch formula.
\end{proof}

We now introduce a notion of stability for an admissible decomposition $\Xi$.  Let $F_{s'}/S'$ (resp. $Q_{s''}/S''$) be a complete flat family of semistable sheaves with Chern character $\xi'$ (resp. $\xi''$), parameterized by a smooth and irreducible base variety.  Since $\ext^1(Q_{s''},F_{s'})$ does not depend on $(s',s'')\in S'\times S''$, there is a projective bundle $S$ over $S'\times S''$ such that the fiber over a point $(s',s'')$ is $\P\Ext^1(Q_{s''},F_{s'})$.  Then $S$ is smooth, irreducible, and it carries a universal extension sheaf $E_s/S$.

We wish to examine the stability properties of the general extension $E_s/S$.  If $E_s$ is \mbox{$(\mu$-)(semi)stable} for some $s\in S$, then the general $E_s$ has the same stability property.  Since the moduli spaces $M(\xi')$ and $M(\xi'')$ are irreducible, the general $E_s$ will be $(\mu$-)(semi)stable if and only if there exists some extension $$0\to F \to E\to Q\to 0$$ where $F\in M(\xi')$, $Q\in M(\xi'')$, and $E$ is $(\mu$-)(semi)stable. Since $S$ is complete,  we do not need to know that $E$ is parameterized by a point of $S$.

\begin{definition}\label{def-stableTriple}
Let $\Xi$ be an admissible decomposition.  We say that $\Xi$ is \emph{generically} \mbox{$(\mu$-)}(semi)stable if there is some extension $$0\to F\to E\to Q\to 0$$ where $F\in M(\xi')$, $Q\in M(\xi'')$, and $E$ is $(\mu$-)(semi)stable.
\end{definition}

\section{Extremal triples}\label{sec-extremal}  We now introduce the decomposition of a Chern character $\xi$ which frequently corresponds to the primary edge of the ample cone of $M(\xi)$.

\begin{definition}\label{def-extremal}
We call a triple $\Xi=(\xi',\xi,\xi'')$ of Chern characters \emph{extremal} if it is an admissible decomposition of $\xi$ with the following additional properties:
\begin{enumerate}[label=(E\arabic*)]
\item \label{cond-slopeClose} $\xi'$ and $\xi$ are \emph{slope-close}: we have $\mu(\xi') < \mu(\xi)$, and every rational number in the interval $(\mu(\xi'),\mu(\xi))$ has denominator larger than $\rk(\xi)$.
\item \label{cond-discMinimal} $\xi'$ is \emph{discriminant-minimal}: if $\theta'$ is a stable Chern character with $0<\rk(\theta')\leq \rk(\xi)$ and $\mu(\theta') = \mu(\xi')$, then $\Delta(\theta')\geq \Delta(\xi')$.
\item \label{cond-rankMinimal} $\xi'$ is \emph{rank-minimal}: if $\theta'$ is a stable Chern character with $\mu(\theta')=\mu(\xi')$ and $\Delta(\theta') = \Delta(\xi')$, then $\rk(\theta')\geq \rk(\xi')$.
\end{enumerate}
\end{definition}

\begin{remark}\label{rem-extremalRemark}
If $\Xi$ is an extremal triple, then it is uniquely determined by $\xi$.   The wall $W(\Xi)$ thus also only depends on $\xi$.  Not every stable character $\xi$ can be decomposed into an extremal triple $\Xi = (\xi',\xi,\xi'')$, but the vast majority can; see Lemma \ref{lem-extremalExist}.

Condition \ref{cond-discMinimal} in Definition \ref{def-admissible} is motivated by the formula for the center $(s,0)$ of $W(\Xi)$: $$s = \frac{\mu(\xi')+\mu(\xi)}{2}-\frac{\Delta(\xi')-\Delta(\xi)}{\mu(\xi')-\mu(\xi)}.$$ If $\Delta(\xi')$ decreases while the other invariants are held fixed, then the center of $W(\Xi)$ moves left.  Correspondingly, the wall becomes larger.  As we are searching for the largest walls, intuitively we should restrict our attention to triples with minimal $\Delta(\xi')$.  

Similarly, condition \ref{cond-slopeClose} typically helps make the wall $W(\Xi)$ large.  In the formula for $s$, the term $$-\frac{\Delta(\xi')-\Delta(\xi)}{\mu(\xi')-\mu(\xi)}$$ will dominate the expression if $\Delta(\xi)$ is sufficiently large and $\mu(\xi')$ is sufficiently close to $\mu(\xi)$.

Condition \ref{cond-rankMinimal} forces $\xi'$ to be stable, since semiexceptional characters are multiples of exceptional characters.
\end{remark}

The next lemma shows the definition of an extremal triple is not vacuous. 
\begin{lemma}\label{lem-extremalExist}
Let $\xi = (r,\mu,\Delta)$ be a stable Chern character, and suppose either
\begin{enumerate}
\item $\Delta$ is sufficiently large (depending on $r$ and $\mu$) or
\item $r\leq 6$.
\end{enumerate}
Then there is a unique extremal triple $\Xi = (\xi',\xi,\xi'')$.
\end{lemma}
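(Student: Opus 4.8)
The plan is to build $\xi' = (r',\mu',\Delta')$ by a three-stage minimization dictated by conditions \ref{cond-slopeClose}--\ref{cond-rankMinimal}, set $\xi'' := \xi - \xi'$, and then check that the resulting triple is admissible; uniqueness is already recorded in Remark \ref{rem-extremalRemark}, so only existence is at stake. First I would fix the slope: take $\mu'$ to be the largest rational of denominator at most $r = \rk(\xi)$ that is strictly less than $\mu = \mu(\xi)$. The rationals of denominator at most $r$ form a discrete, locally finite, translation-invariant subset of $\R$ which is nonempty and bounded above by $\mu$, so this largest element exists and is unique, and \ref{cond-slopeClose} holds by construction. Next I would minimize the discriminant: among stable characters of slope $\mu'$ and rank at most $r$, let $\Delta'$ be the least discriminant. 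This set is nonempty by the Dr\'ezet--Le Potier classification (there are stable characters of rank equal to the denominator of $\mu'$ and large discriminant), and its discriminants are bounded below by $\delta(\mu')$ and lie in a discrete set, so the minimum is attained, giving \ref{cond-discMinimal}. Finally I minimize the rank among stable characters of slope $\mu'$ and discriminant $\Delta'$ to obtain $r'$ and hence \ref{cond-rankMinimal}. The character $\xi'$ so produced is stable, since a semiexceptional character would have a proper exceptional summand of strictly smaller rank with the same slope and discriminant, contradicting rank-minimality; this yields \ref{cond-Fstable}, while \ref{cond-rank} and \ref{cond-Fslope} are immediate from $1 \le r' \le r$ and $\mu' < \mu$.

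I would then dispose of condition \ref{cond-slopeDiff} by a direct computation, writing $r'' = r - r'$. When $r'' = 0$ the decomposition is torsion and \ref{cond-slopeDiff} is vacuous. When $r'' > 0$, additivity of $c_1$ gives
\[
\mu(\xi'') - \mu' = \frac{r(\mu - \mu')}{r''}.
\]
Since $\lfloor r\mu\rfloor / r$ is a rational of denominator at most $r$ lying in $[\mu - 1/r, \mu)$, the choice of $\mu'$ forces $\mu - \mu' \le 1/r$, whence $\mu(\xi'') - \mu' \le 1/r'' \le 1 < 3$. Thus \ref{cond-slopeDiff} holds automatically, with considerable room to spare, and plays no serious role.

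The real content, and the step I expect to be the main obstacle, is condition \ref{cond-Qstable}: that $\xi''$ is a stable character. If $r'' = 0$ then $\xi''$ has rank $0$ and $c_1(\xi'') = r(\mu - \mu') > 0$, and such a character is stable, so I may assume $r'' > 0$. By the Dr\'ezet--Le Potier theorem it then suffices to verify $\Delta(\xi'') \ge \delta(\mu(\xi''))$. Using additivity of $\ch_2$ I would record the identity
\[
\Delta(\xi'') = \frac{r\Delta - r'\Delta'}{r''} + \frac{r'\, r\,(\mu - \mu')^2}{2(r'')^2},
\]
which is increasing and unbounded in $\Delta$ once the remaining data are fixed. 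In case (1) the triple $(\mu',\Delta',r')$ depends only on $r$ and $\mu$, so the right-hand side tends to $+\infty$ as $\Delta \to \infty$; hence the inequality holds once $\Delta$ exceeds an explicit threshold depending only on $r$ and $\mu$, which is exactly the promised lower bound.

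The hard part will be case (2). Here $r \le 6$ forces $r'' \le 5$, and by the integer-translation invariance of $\delta$ and of the whole construction there are only finitely many configurations of $(\mu', r')$ relative to $\mu$ to consider; but within each the height $\Delta - \delta(\mu)$ still ranges over a full interval, so I cannot appeal to an asymptotic argument. Instead I would use the displayed identity to bound $\Delta(\xi'')$ from below and compare it against $\delta(\mu(\xi''))$ using the explicit piecewise formula for $\delta$ and the short list of exceptional slopes of small denominator, checking each low-rank case by hand. The delicate cases are those of small height, where $\Delta = \delta(\mu)$ and $\Delta(\xi'')$ sits closest to the Dr\'ezet--Le Potier curve; confirming that the inequality $\Delta(\xi'') \ge \delta(\mu(\xi''))$ nonetheless persists in every such configuration is the crux of the argument. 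Once conditions \ref{cond-Fstable}--\ref{cond-slopeDiff} and \ref{cond-slopeClose}--\ref{cond-rankMinimal} are all verified, $\Xi = (\xi',\xi,\xi'')$ is an extremal triple, and its uniqueness is given by Remark \ref{rem-extremalRemark}.
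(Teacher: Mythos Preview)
Your approach is essentially the paper's own: determine $\xi'$ from $(r,\mu)$ alone via the three minimizations, verify \ref{cond-slopeDiff} by the slope estimate (your exact identity $\mu''-\mu' = r(\mu-\mu')/r''$ is slightly sharper than the paper's bound $1/r''+1/r$), handle case~(1) by letting $\Delta''$ grow with $\Delta$, and reduce case~(2) to a finite check. The paper likewise defers the $r\le 6$ stability of $\xi''$ to the explicit list of minimal triples in Table~\ref{table-slopeClose}.

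There is one concrete slip in your case~(2) plan. You write that it suffices to verify $\Delta(\xi'')\ge\delta(\mu(\xi''))$, but this inequality \emph{fails} in several of the minimal small-rank triples: for instance with $\xi=(5,\tfrac{2}{5},\tfrac{12}{25})$ or $\xi=(6,\tfrac{1}{3},\tfrac{5}{9})$ one gets $\xi''=(2,\tfrac{1}{2},\tfrac{3}{8})=\ch T_{\P^2}(-1)$, which has $\Delta''=\tfrac{3}{8}<\tfrac{5}{8}=\delta(\tfrac{1}{2})$. These $\xi''$ are nonetheless stable because they are \emph{exceptional}. So your criterion must be ``$\Delta''\ge\delta(\mu'')$ or $\xi''$ is an exceptional character,'' and the finite check has to allow for this second possibility. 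The paper sidesteps the issue by simply tabulating the minimal triples and observing that each $\xi''$ is stable, without naming a single numerical test.
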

\begin{proof}
Let $(r^\bullet, \mu^\bullet, \Delta^\bullet)$ denote the rank, slope and discriminant of $\xi^\bullet$.  The Chern character $\xi'$ is uniquely determined by conditions \ref{cond-Fstable}, \ref{cond-rank}, and \ref{cond-slopeClose}-\ref{cond-rankMinimal};  it depends only on $r$ and $\mu$, and not $\Delta$.  Set $\xi''=\xi-\xi'$, and observe that $r''$ and $\mu''$ depend only on $r$ and $\mu$.  We must check that $\xi''$ is stable and $\mu''-\mu'<3$ if $r''>0$. If $r''=0$, then $c_1(\xi'')>0$, so stability is automatic.

  Suppose $r''>0$.  Let us show $\mu''-\mu'<3$.  By \ref{cond-slopeClose} we have $\mu'\geq\mu-\frac{1}{r}$, so $$r''\mu''=r\mu - r'\mu'\leq (r-r')\mu+\frac{r'}{r}<r''\mu+1$$ and $$\mu''-\mu' < \mu+\frac{1}{r''}-\mu+\frac{1}{r} = \frac{1}{r''}+\frac{1}{r}\leq \frac{3}{2}.$$

If $r\leq 6$, we will see that $\xi''$ is stable in \S\ref{sec-smallRank}.  Suppose $\Delta$ is sufficiently large.  We have a relation $$r\Delta = r'\Delta'+r''\Delta''-\frac{r'r''}{r}(\mu'-\mu'')^2.$$  The invariants  $r',\mu',\Delta',r'',\mu''$ depend only on $r$ and $\mu$.   By making $\Delta$ large, we can make $\Delta''$ as large as we want, and thus we can make $\xi''$ stable.
\end{proof}

It is easy to prove a weak stability result for extremal triples.

\begin{proposition}\label{prop-slopeSemistable}
Let $\Xi=(\xi',\xi,\xi'')$ be an extremal torsion-free triple.  Then $\Xi$ is generically $\mu$-semistable.
\end{proposition}
\begin{proof}
By Lemma \ref{existenceOfExtensions}, there is a non-split extension $$0\to F \to E \to Q\to 0$$ with $F\in M^s(\xi')$ stable and $Q\in M(\xi)$.  We will show $E$ is $\mu$-semistable.  Since $F$ and $Q$ are torsion-free, $E$ is torsion-free as well.

Suppose $E$ is not $\mu$-semistable.  Then there is some surjection $E\to C$ with $\mu(C)<\mu(E)$ and $\rk(C)<\rk(E)$.  By passing to a suitable quotient of $C$, we may assume $C$ is stable.  Using slope-closeness \ref{cond-slopeClose}, we find $\mu(C)\leq \mu(F)$.  

First assume $\mu(C)<\mu(F)$.  By stability, the composition $F\to E\to C$ is zero, and thus $E\to C$ induces a map $Q\to C$.  This map is zero by stability, from which we conclude $E\to C$ is zero, a contradiction.

Next assume $\mu(C)= \mu(F)$.  If $\Delta(C)>\Delta(F)$, then we have an inequality $p_C<p_F$ of reduced Hilbert polynomials, so $F\to C$ is zero by stability and we conclude as in the previous paragraph.  On the other hand, $\Delta(C)<\Delta(F)$ cannot occur by the minimality condition \ref{cond-discMinimal}.

Finally, suppose $\mu(C) = \mu(F)$ and $\Delta(C) = \Delta(F)$.  Since $C$ and $F$ are both stable, any nonzero map $F\to C$ is an isomorphism.  Then the composition $E\to C\to F$ with the inverse isomorphism splits the sequence.
\end{proof}

The following corollary gives the first statement of Theorem \ref{thm-introSt} in the torsion-free case.

\begin{corollary}\label{cor-slopeStable}
If $\Xi$ is a coprime, torsion-free, extremal triple, then it is generically $\mu$-stable.
\end{corollary}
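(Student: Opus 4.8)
The plan is to upgrade the $\mu$-semistability from Proposition~\ref{prop-slopeSemistable} to $\mu$-stability, using the coprimality hypothesis to rule out strictly semistable behavior. The key point is that a torsion-free $\mu$-semistable sheaf $E$ with $\rk(E)$ and $c_1(E)$ coprime is automatically $\mu$-stable: any destabilizing subsheaf $C\subset E$ with $\mu(C)=\mu(E)$ would have $\mu(C) = c_1(C)/\rk(C) = c_1(E)/\rk(E)$, forcing $\rk(E)\mid \rk(C)c_1(E)$ and, by coprimality, $\rk(E)\mid \rk(C)$. Since $0<\rk(C)<\rk(E)$ for a proper destabilizing subsheaf, this is impossible. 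Thus coprimality forbids proper subsheaves of the same slope, and $\mu$-semistable plus coprime gives $\mu$-stable.

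Concretely, I would first invoke Proposition~\ref{prop-slopeSemistable}: since $\Xi$ is an extremal torsion-free triple, it is generically $\mu$-semistable, so there is an extension
\[
0\to F\to E\to Q\to 0
\]
with $F\in M(\xi')$, $Q\in M(\xi'')$, and $E$ torsion-free and $\mu$-semistable of character $\xi$. Then I would observe that the coprimality of $\Xi$ means $\rk(\xi)$ and $c_1(\xi)$ are coprime. Applying the numerical argument above to this particular $E$ shows $E$ has no proper subsheaf of slope equal to $\mu(\xi)$, hence $E$ is $\mu$-stable. By the definition of generic $\mu$-stability (Definition~\ref{def-stableTriple}), the existence of a single $\mu$-stable extension $E$ suffices to conclude that $\Xi$ is generically $\mu$-stable.

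I do not expect any serious obstacle here; the content is essentially the standard fact that coprime $\mu$-semistable torsion-free sheaves are $\mu$-stable, combined directly with the semistability already established. The only point requiring slight care is to confirm that the destabilizing subsheaf in the definition of $\mu$-semistability can be taken with $0<\rk(C)<\rk(E)$ so that the divisibility contradiction applies—this is automatic since a full-rank saturated subsheaf of a torsion-free sheaf with equal slope and nonzero quotient cannot exist for coprime invariants, and a rank-zero subsheaf is torsion and hence trivial. Thus the corollary follows immediately from Proposition~\ref{prop-slopeSemistable} together with coprimality.
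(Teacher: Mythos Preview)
Your argument is correct and is exactly the intended one: the paper states this as an immediate corollary of Proposition~\ref{prop-slopeSemistable} with no further proof, relying on the standard fact that a torsion-free $\mu$-semistable sheaf with coprime rank and $c_1$ is automatically $\mu$-stable. Your write-up simply makes this explicit.
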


\section{Elementary modifications}\label{sec-elementaryMod} Many stability properties of an admissible decomposition $\Xi=(\xi',\xi,\xi'')$ are easier to understand when the discriminant $\Delta(\xi)$ is small.  Elementary modifications allow us to reduce to the small discriminant case.

\begin{definition}
Let $G$ be a coherent sheaf and let $G\to \OO_p$ be a surjective homomorphism.  Then the kernel $$0\to G'\to G\to \OO_p\to 0$$ is called an \emph{elementary modification} of $G$.    
\end{definition}

If $G$ has positive rank, we observe the equalities $$\rk(G') = \rk(G) \qquad \mu(G') = \mu(G) \qquad \Delta(G') = \Delta(G) + \frac{1}{\rk(G)} \qquad \chi(G') =\chi(G)-1.$$ The next lemma is immediate.

\begin{lemma}
If $G$ is $\mu$-(semi)stable, then any elementary modification of $G$ is $\mu$-(semi)stable.
\end{lemma}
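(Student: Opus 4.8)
The plan is to verify the claim directly from the definition of $\mu$-(semi)stability, using the numerical invariants of an elementary modification recorded just above the statement. Let $G$ be $\mu$-(semi)stable and let
\[
0\to G'\to G\to \OO_p\to 0
\]
be an elementary modification. Since $\OO_p$ is a torsion sheaf supported at a point, $G'$ is the kernel of a surjection onto a zero-dimensional sheaf; hence $G'$ is a subsheaf of $G$ of the same rank and the same slope, $\rk(G')=\rk(G)$ and $\mu(G')=\mu(G)$. The key point is that $\mu$-stability depends only on the rank and $\ch_1$ (equivalently the slope) of subsheaves, and these are unchanged by an elementary modification.

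First I would reduce the statement to an inequality of slopes for subsheaves of $G'$. Let $U\subset G'$ be a nonzero proper subsheaf; I want to bound $\mu(U)$ against $\mu(G')=\mu(G)$. Composing the inclusion $U\hookrightarrow G'$ with $G'\hookrightarrow G$ realizes $U$ as a subsheaf of $G$ as well, and this inclusion preserves rank and $\ch_1$, so $\mu(U)$ computed inside $G'$ equals $\mu(U)$ computed inside $G$. Since $U$ is a proper nonzero subsheaf of $G$ (its rank is at most $\rk(G')=\rk(G)$, and if the rank equals $\rk(G)$ then it is a proper subsheaf of full rank), the $\mu$-(semi)stability of $G$ gives $\mu(U)<\mu(G)$ in the stable case and $\mu(U)\leq\mu(G)$ in the semistable case. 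Because $\mu(G')=\mu(G)$, this is exactly the defining inequality $\mu(U)<\mu(G')$ (resp. $\leq$) for $U\subset G'$, which establishes that $G'$ is $\mu$-(semi)stable.

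The only subtlety, and the step deserving the most care, is the comparison of slopes for full-rank subsheaves. If $U\subset G'$ has $\rk(U)=\rk(G)$, then $U$ is a full-rank proper subsheaf of $G$, and $\mu$-semistability of $G$ still applies since full-rank subsheaves have slope at most $\mu(G)$ (the quotient being torsion cannot lower $\ch_1$); this yields the required non-strict inequality. In the stable case one observes that a full-rank proper subsheaf of a $\mu$-stable sheaf has strictly smaller $\ch_1$, hence strictly smaller slope, so the strict inequality survives. I do not expect any genuine obstacle here; the lemma is essentially bookkeeping, and the phrase ``immediate'' in the paper is justified once one notes that $\mu$-stability is insensitive to modifications by zero-dimensional sheaves.
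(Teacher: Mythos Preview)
Your core argument is correct and matches the paper's (omitted, ``immediate'') proof: since $G'\subset G$ with the same rank and slope, any subsheaf $U\subset G'$ of strictly smaller rank is also a subsheaf of $G$ of strictly smaller rank, whence $\mu(U)<(\leq)\,\mu(G)=\mu(G')$.

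Your final paragraph, however, contains a false assertion. You claim that a full-rank proper subsheaf of a $\mu$-stable sheaf $G$ has strictly smaller $\ch_1$. This is not true: the elementary modification $G'\subset G$ is itself a full-rank proper subsheaf with $c_1(G')=c_1(G)$, since $c_1(\OO_p)=0$. Fortunately this paragraph is simply unnecessary: the standard definition of $\mu$-(semi)stability (as in Huybrechts--Lehn or Le Potier, both cited in the paper) only tests subsheaves $U$ with $0<\rk(U)<\rk(G)$, precisely because no strict inequality can hold in general for full-rank subsheaves. Once you restrict to that definition, your first two paragraphs already constitute a complete proof; drop the last one.
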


\begin{warning}
Elementary modifications do not generally preserve Gieseker (semi)stability.  This is our reason for focusing on $\mu$-stability of extensions.
\end{warning}

Given a short exact sequence of sheaves, there is a natural induced sequence involving compatible elementary modifications.

\begin{proposition-definition}\label{def-elementaryModSequence}
Suppose $$0\to F \to E \to Q\to 0$$ is a short exact sequence of sheaves.  Let $Q'$ be the elementary modification of $Q$ corresponding to a homomorphism $Q\to \OO_p$, and let $E'$ be the elementary modification of $E$ corresponding to the composition $E\to Q\to \OO_p$.  Then there is a natural short exact sequence $$0\to F \to E'\to Q'\to 0.$$ This sequence is called an \emph{elementary modification} of the original sequence.
\end{proposition-definition}
\begin{proof}
A straightforward argument shows that there is a natural commuting diagram
$$\xymatrix{
&&0\ar[d]&0\ar[d]&\\
0\ar[r]&F\ar@{=}[d]\ar[r]&E'\ar[d]\ar[r]&Q'\ar[d]\ar[r]&0\\
0\ar[r]&F\ar[r]&E\ar[d]\ar[r]&Q\ar[d]\ar[r]&0\\
&&\OO_p\ar[d]\ar@{=}[r]&\OO_p\ar[d]&\\
&&0&0&
}$$
with exact rows and columns.
\end{proof}

We similarly extend the notion of elementary modifications to decompositions of Chern characters.

\begin{definition}
Let $\Xi = (\xi',\xi,\xi'')$ be a decomposition.  Let $\Theta = (\theta',\theta,\theta'')$ be the decomposition  such that
\begin{enumerate}
\item $\theta'=\xi'$,
\item $\theta$ and $\xi$ have the same rank and slope, and
\item $\Delta(\theta) = \Delta(\xi) + \frac{1}{\rk(\xi)}$.
\end{enumerate}
We call $\Theta$ the \emph{elementary modification} of $\Xi$.  If $\Xi$ is admissible, then $\Theta$ is admissible as well.

If $\Xi$ and $\Theta$ are admissible decompositions, we say $\Theta$ lies \emph{above} $\Xi$, and write $\Xi\preceq \Theta$, if conditions (1)-(3) are satisfied and $\Delta(\xi)\leq \Delta(\theta)$. Finally, $\Xi$ is \emph{minimal} if it is a minimal admissible decomposition with respect to $\preceq$.
\end{definition}

The next result follows from the integrality of the Euler characteristic and the Riemann-Roch formula.

\begin{lemma}
Let $\Xi$ and $\Theta$ be admissible decompositions.  Then $\Xi\preceq \Theta$ if and only if $\Theta$ is an iterated elementary modification of $\Xi$.
\end{lemma}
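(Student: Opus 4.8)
The plan is to reduce the lemma to a single integrality statement. Recall that one elementary modification of a decomposition $(\xi',\xi,\xi'')$ leaves $\xi'$ unchanged, preserves the rank and slope of $\xi$, and raises $\Delta(\xi)$ by exactly $\frac{1}{\rk(\xi)}$ (correspondingly raising $\Delta(\xi'')$ by $\frac{1}{\rk(\xi'')}$ when $\rk(\xi'')>0$). The relation $\Xi\preceq\Theta$ asks for precisely $\theta'=\xi'$, equality of the ranks and slopes of $\theta$ and $\xi$, and $\Delta(\theta)\geq\Delta(\xi)$. Thus the two notions coincide once we know that the discriminant gap $\Delta(\theta)-\Delta(\xi)$ is a nonnegative integer multiple of $\frac{1}{\rk(\xi)}$.

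First I would dispatch the implication that an iterated elementary modification $\Theta$ of $\Xi$ satisfies $\Xi\preceq\Theta$, by induction on the number of modifications. Each modification preserves $\theta'=\xi'$ and the rank and slope of the middle character while strictly increasing its discriminant, so after finitely many steps (including the reflexive case of zero steps) the conditions defining $\preceq$ still hold. Here I would invoke the fact, recorded in the definition, that the elementary modification of an admissible decomposition is again admissible; this guarantees that every intermediate decomposition in the chain is admissible.

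The substance of the lemma lies in the converse, and the integrality computation is the one genuinely nontrivial step. Suppose $\Xi\preceq\Theta$, and write $r=\rk(\xi)=\rk(\theta)$ and $\mu$ for the common slope. Applying the Riemann--Roch formula with $E=\OO_{\PP^2}$ gives $\chi(\xi)=r\bigl(P(\mu)-\Delta(\xi)\bigr)$ and likewise for $\theta$, whence
\[
\chi(\xi)-\chi(\theta)=r\bigl(\Delta(\theta)-\Delta(\xi)\bigr).
\]
Since $\xi$ and $\theta$ are integral Chern characters, both Euler characteristics are integers, so $n:=r\bigl(\Delta(\theta)-\Delta(\xi)\bigr)$ is an integer; moreover $n\geq 0$ because $\Delta(\theta)\geq\Delta(\xi)$ and $r>0$. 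Hence $\Delta(\theta)=\Delta(\xi)+\frac{n}{r}$, and applying $n$ successive elementary modifications to $\Xi$ yields a chain of admissible decompositions whose terminal term has first character $\xi'$, the same rank and slope as $\xi$, and discriminant $\Delta(\xi)+\frac{n}{r}=\Delta(\theta)$. Since a decomposition is determined by its first character together with the rank, slope, and discriminant of its middle character, this terminal decomposition is exactly $\Theta$. The only point requiring care is that this chain stays within the admissible decompositions, which is again supplied by the preservation-of-admissibility fact.
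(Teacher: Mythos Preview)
Your proof is correct and follows exactly the approach the paper indicates: the paper's proof consists of the single sentence ``The next result follows from the integrality of the Euler characteristic and the Riemann-Roch formula,'' and you have simply unpacked this, using Riemann--Roch to express $\chi(\xi)-\chi(\theta)=r(\Delta(\theta)-\Delta(\xi))$ and concluding from integrality of the Euler characteristic that the discriminant gap is a nonnegative integer multiple of $\tfrac{1}{r}$.
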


Extremality is preserved by elementary modifications.

\begin{lemma}
Suppose $\Xi$ and $\Theta$ are admissible decompositions with $\Xi\preceq \Theta$.  If one decomposition is extremal, then the other is as well.
\end{lemma}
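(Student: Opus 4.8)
The plan is to exploit the fact that extremality, beyond plain admissibility, is governed by the three conditions \ref{cond-slopeClose}, \ref{cond-discMinimal}, and \ref{cond-rankMinimal} of Definition \ref{def-extremal}, and to observe that each of these is expressed purely in terms of the invariants $\mu(\xi')$, $\rk(\xi')$, $\Delta(\xi')$, $\mu(\xi)$, and $\rk(\xi)$. Crucially, none of them references $\Delta(\xi)$, which is the single invariant altered by an elementary modification. Since the relation $\Xi\preceq\Theta$ leaves $\xi'$ entirely unchanged (so $\theta'=\xi'$, giving the same $\mu$, $\rk$, and $\Delta$) and preserves both the rank and the slope of the middle term ($\rk(\theta)=\rk(\xi)$ and $\mu(\theta)=\mu(\xi)$), every quantity appearing in \ref{cond-slopeClose}--\ref{cond-rankMinimal} is literally the same for $\Xi$ and for $\Theta$. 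Hence $\Xi$ satisfies the extremality conditions if and only if $\Theta$ does.

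Concretely, I would first note that both $\Xi$ and $\Theta$ are admissible by hypothesis, so it suffices to show the three extremality conditions transfer between them. Then I would run through the conditions one at a time. Condition \ref{cond-slopeClose} depends only on the ordered pair of slopes $\mu(\xi')<\mu(\xi)$ together with the bound $\rk(\xi)$ on denominators; all three data are identical for $\Theta$. Condition \ref{cond-discMinimal} is a statement comparing $\Delta(\xi')$ against the discriminants of stable characters of slope $\mu(\xi')$ and rank at most $\rk(\xi)$, so it involves only $\mu(\xi')$, $\Delta(\xi')$, and $\rk(\xi)$; again these are unchanged. Condition \ref{cond-rankMinimal} compares $\rk(\xi')$ against the ranks of stable characters of slope $\mu(\xi')$ and discriminant $\Delta(\xi')$, involving only the invariants of $\xi'$, which are preserved verbatim. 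In each case the relevant input is carried over unchanged under $\preceq$.

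Since extremality is, by definition, admissibility together with \ref{cond-slopeClose}--\ref{cond-rankMinimal}, and admissibility holds on both sides while the three conditions are equivalent for $\Xi$ and $\Theta$, the extremality of the two decompositions is equivalent. In particular, if either one is extremal, so is the other.

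The only point requiring any attention is verifying that the three extremality conditions genuinely avoid any dependence on $\Delta(\xi)$; once this is noted, the rest is immediate. I therefore do not expect any substantive obstacle. Indeed, this is essentially by design: as already observed in the proof of Lemma \ref{lem-extremalExist}, the character $\xi'$ determined by the extremality conditions depends only on $r$ and $\mu$, and not on $\Delta$, which is precisely the stability of $\xi'$ (and of the bound $\rk(\xi)$) under elementary modification of $\xi$.
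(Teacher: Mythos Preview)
Your argument is correct and is exactly the natural one: the extremality conditions \ref{cond-slopeClose}--\ref{cond-rankMinimal} involve only $\xi'$, $\rk(\xi)$, and $\mu(\xi)$, all of which are preserved under $\preceq$. The paper in fact states this lemma without proof, treating it as immediate from the definitions, so your write-up simply makes explicit what the paper leaves to the reader.
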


Combining our results so far in this subsection, we obtain the following tool for proving results on generic $\mu$-stability of triples.

\begin{proposition}\label{prop-minimalReduction}
Suppose $\Xi$ is a minimal admissible decomposition and that $\Xi$ is generically $\mu$-stable.  Then any $\Theta$ which lies above $\Xi$ is also generically $\mu$-stable.  
\end{proposition}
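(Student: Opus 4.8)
The plan is to prove the proposition by induction, reducing the passage from $\Xi$ to $\Theta$ to a single elementary modification at a time. By the lemma characterizing $\preceq$ in terms of iterated elementary modifications, we may choose a chain $\Xi = \Xi_0 \preceq \Xi_1 \preceq \cdots \preceq \Xi_n = \Theta$ in which each $\Xi_{i+1}$ is the elementary modification of $\Xi_i$. Since $\Xi$ is admissible and the elementary modification of an admissible decomposition is again admissible, every $\Xi_i$ is admissible. Writing $\Xi_i = (\xi', \xi_i, \xi_i'')$, the first factor $\xi'$ is constant along the chain, while $\xi_{i+1} = \xi_i - \ch \OO_p$ and hence $\xi_{i+1}'' = \xi_i'' - \ch \OO_p$; that is, $\xi_{i+1}''$ is the elementary modification of $\xi_i''$. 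Thus it suffices to prove the single inductive step: if $\Xi_i$ is admissible and generically $\mu$-stable and $\Xi_{i+1}$ is its elementary modification, then $\Xi_{i+1}$ is generically $\mu$-stable.

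For the inductive step, I would produce a $\mu$-stable extension for $\Xi_i$ whose quotient term is $\mu$-stable. Because $\xi_i''$ is a stable Chern character, the $\mu$-stable sheaves form a dense open subset of the irreducible variety $M(\xi_i'')$, nonempty by Dr\'ezet and Le Potier \cite[Corollary 4.12]{DLP} (when $\xi_i''$ is exceptional this locus is the single point $E_\beta$). Combined with the hypothesis that $\Xi_i$ is generically $\mu$-stable, this lets us choose $F \in M(\xi')$, a $\mu$-stable sheaf $Q \in M(\xi_i'')$, and an extension class for which the resulting $E$ in $0 \to F \to E \to Q \to 0$ is $\mu$-stable. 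Fixing a point $p$ and a surjection $Q \to \OO_p$, we form the elementary modification of this sequence as in Proposition and Definition \ref{def-elementaryModSequence}, obtaining $0 \to F \to E' \to Q' \to 0$, in which $E'$ is the elementary modification of $E$ and $Q'$ that of $Q$; thus $\ch(E') = \xi_{i+1}$, $\ch(Q') = \xi_{i+1}''$, and $F$ is unchanged.

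It remains to check that this sequence witnesses generic $\mu$-stability of $\Xi_{i+1}$. Since elementary modifications preserve $\mu$-stability (the preceding lemma) and $E$ is $\mu$-stable, the sheaf $E'$ is $\mu$-stable, and $F \in M(\xi') = M(\theta')$. The one delicate point, which I expect to be the main obstacle, is that we must know $Q' \in M(\xi_{i+1}'')$, that is, that $Q'$ is Gieseker semistable: as the Warning above records, elementary modifications do \emph{not} preserve Gieseker stability in general, so this does not come for free. This is precisely why I arranged $Q$ to be $\mu$-stable, for then $Q'$ is again $\mu$-stable, hence Gieseker stable, hence an element of $M(\xi_{i+1}'')$. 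Consequently $E'$ realizes $\Xi_{i+1}$ as generically $\mu$-stable, and iterating along the chain proves the proposition. An analogous argument, with $\mu$-stability of $Q$ replaced by stability of the one-dimensional quotient (and a general elementary modification of a stable one-dimensional sheaf being again stable), covers the torsion case $\rk(\xi'') = 0$. Minimality of $\Xi$ plays no role in this inductive mechanism; it serves only to situate $\Xi$ at the bottom of its tower, so that the hypothesis furnishes the base case from which generic $\mu$-stability propagates upward.
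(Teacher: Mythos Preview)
Your proof is correct and is precisely the argument the paper has in mind when it says the proposition follows by ``combining our results so far in this subsection.'' You have in fact been more careful than the paper in isolating the one subtle point---that the elementary modification $Q'$ must again lie in $M(\xi_{i+1}'')$---and your fix (choosing $Q$ to be $\mu$-stable, which is possible by openness and \cite[Corollary 4.12]{DLP}) is exactly the right way to handle it.
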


\section{Stability of small rank extremal triples}\label{sec-smallRank}

The goal of this subsection is to prove the following theorem.

\begin{theorem}\label{thm-slopeCloseStable}
Let $\Xi = (\xi',\xi,\xi'')$ be an extremal triple with $\rk(\xi)\leq 6$.  Then $\Xi$ is generically $\mu$-stable.
\end{theorem}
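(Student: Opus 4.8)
The plan is to combine the elementary-modification reduction with a finite case analysis made possible by the rank bound. First I would reduce to minimal decompositions: by the lemma that extremality is preserved under elementary modifications, any extremal triple $\Xi$ lies above a minimal admissible triple $\Xi_0$ which is again extremal, and by Proposition \ref{prop-minimalReduction} it then suffices to prove that every minimal extremal triple with $\rk(\xi)\le 6$ is generically $\mu$-stable. This reduction is exactly what makes the problem finite: for a minimal triple the discriminant $\Delta(\xi)$ is forced down to the smallest value compatible with admissibility, so $\Delta(\xi'')$ sits essentially on the Dr\'ezet--Le Potier curve $\delta$ and the sheaves involved lie in small, explicitly understood moduli spaces (often built from exceptional bundles).

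Next I would cut the problem down to finitely many numerical types. Since $\mu$-stability and the entire extremal structure are preserved by twisting $\xi$ by a line bundle $\OO_{\PP^2}(n)$, I may assume $\mu(\xi)$ lies in a fixed fundamental domain; as $\mu(\xi)\in\frac{1}{r}\ZZ$ with $1\le r\le 6$, and as $\xi'$ is determined by $r$ and $\mu$ alone (Lemma \ref{lem-extremalExist}), there are only finitely many minimal extremal triples to examine. Enumerating this list also discharges the claim deferred in the proof of Lemma \ref{lem-extremalExist} that $\xi''$ is stable when $r\le 6$, since verifying admissibility of each candidate is part of building the list.

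Among these finitely many triples, the coprime torsion-free ones are settled immediately by Corollary \ref{cor-slopeStable}, and more generally Proposition \ref{prop-slopeSemistable} already yields generic $\mu$-semistability in every torsion-free case. The genuine work is therefore to upgrade semistability to stability for the non-coprime torsion-free triples, and to treat the torsion triples ($\rk(\xi'')=0$), to which the earlier propositions do not apply. For a non-coprime case I would start from the $\mu$-semistable extension $E$ produced by Proposition \ref{prop-slopeSemistable} and rule out a proper subsheaf $G\subset E$ of the same slope as $E$: by condition \ref{cond-Fslope} such a $G$ cannot map trivially to $Q$ without landing inside $F$ (contradicting stability of $F$), so it must surject onto a subsheaf of $Q$ of slope $\ge\mu(E)$, which the minimality conditions \ref{cond-discMinimal}--\ref{cond-rankMinimal} together with slope-closeness \ref{cond-slopeClose} forbid for a minimal triple. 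For the torsion cases, where $\mu(\xi'')=\infty$, I would instead argue directly that for general $F\in M(\xi')$ and general $Q\in M(\xi'')$ a non-split extension $0\to F\to E\to Q\to 0$ is $\mu$-stable, once more by excluding equal-slope subsheaves.

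The main obstacle is precisely this last step. The non-coprime and torsion triples are exactly the situations in which $\mu$-semistable does not automatically force $\mu$-stable, so one cannot avoid either an explicit construction or a hands-on subsheaf analysis for each numerical type. Organizing these finitely many exceptional triples, and confirming in each that the minimal discriminant leaves only equal-slope subsheaves excluded by conditions \ref{cond-slopeClose}--\ref{cond-rankMinimal}, is where the effort concentrates; the rank bound $\rk(\xi)\le 6$ is essential only to keep this list short enough to dispatch by hand.
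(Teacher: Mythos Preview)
Your reduction strategy---normalize by a twist, pass to minimal extremal triples via Proposition~\ref{prop-minimalReduction}, enumerate the finite list, and dispatch the coprime torsion-free entries by Corollary~\ref{cor-slopeStable}---is exactly what the paper does. The gap is in your treatment of the non-coprime torsion-free triples, where the argument you sketch does not work.

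You propose to take a $\mu$-semistable extension $E$ and exclude a proper subsheaf $G\subset E$ with $\mu(G)=\mu(E)$ by appealing to \ref{cond-slopeClose}--\ref{cond-rankMinimal}. But those conditions constrain $\xi'$, not subsheaves of $E$ of slope $\mu(\xi)$. Concretely, take the minimal triple $\Xi=((3,\tfrac13,\tfrac59),(4,\tfrac12,\tfrac58),(1,1,1))$: nothing in the extremality axioms rules out a copy of $T_{\PP^2}(-1)$ (with invariants $(2,\tfrac12,\tfrac38)$) sitting inside a particular extension $E$, and the image of such a $G$ in $Q=I_p(1)$ is a perfectly legal subsheaf of $Q$. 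Your sentence ``which the minimality conditions \ldots\ forbid for a minimal triple'' has no content here. The paper therefore abandons subsheaf-chasing entirely for these cases and instead proves \emph{completeness}: it shows via an Euler-characteristic and stability argument that the general $E\in M(\xi)$ itself sits in such an extension, whence generic $\mu$-stability follows from Dr\'ezet--Le~Potier's result that the general point of $M(\xi)$ is $\mu$-stable. This handles three of the four non-coprime entries (and, in \S\ref{ssec-torsion}, the torsion triples as well).

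The remaining triple $\Xi=((5,\tfrac25,\tfrac{12}{25}),(6,\tfrac12,\tfrac{17}{24}),(1,1,2))$ is genuinely harder and cannot be done by either your sketch or by completeness. The paper devotes a separate proposition to it, using a Shatz-stratification and Kodaira--Spencer argument: one computes the codimensions of the strata of non-(semi)stable extensions inside the $14$-dimensional family $S$ of extensions (and then inside $M^s(\xi)$), together with an explicit resolution calculation to kill $\Hom(E_s,T_{\PP^2}(-1))$ for general $s$. This is precisely the ``hands-on'' work your last paragraph alludes to, but it requires deformation-theoretic input (the $\Ext^1_\pm$ machinery) that your outline does not supply.
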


By Proposition \ref{prop-minimalReduction}, we only need to consider cases where $\Xi$ is minimal.  We also assume $\Xi$ is torsion-free and defer to \S\ref{ssec-torsion} for the torsion case.  By twisting, we may assume $0<\mu(\xi)\leq 1$.  After these reductions, there are a relatively small number of triples to consider, which we list in Table \ref{table-slopeClose}.  For each triple, we also indicate the strategy we will use to prove the triple is generically $\mu$-stable.

\begin{center}
\renewcommand*{\arraystretch}{1.3}
\begin{longtable}{cccccccccccc}
\caption[]{The minimal, extremal, torsion-free triples $\Xi = (\xi',\xi,\xi'') = ((r',\mu',\Delta'),(r,\mu,\Delta),(r'',\mu'',\Delta''))$ which must be considered in Theorem \ref{thm-slopeCloseStable}.}\label{table-slopeClose}\\
\toprule 
$\xi'$ & $\xi$ & $\xi''$ && Strategy &$\qquad$ & $\xi'$ & $\xi$ & $\xi''$ && Strategy\\\midrule
\endfirsthead
\multicolumn{12}{l}{{\small \it continued from previous page}}\\
\toprule
$\xi'$ & $\xi$ & $\xi''$ && Strategy &$\qquad$ & $\xi'$ & $\xi$ & $\xi''$ && Strategy\\\midrule \endhead
\bottomrule \multicolumn{12}{r}{{\small \it continued on next page}} \\ \endfoot
\bottomrule
\endlastfoot
$(1,0,0)$ & $(2,\frac{1}{2},\frac{3}{8})$ & $(1,1,1)$ && Coprime &&$(2,\frac{1}{2},\frac{3}{8})$&$(5,\frac{3}{5},\frac{12}{25})$&$(3,\frac{2}{3},\frac{5}{9})$ &&Coprime\\
$(1,0,0)$ & $(3,\frac{1}{3},\frac{5}{9})$ & $(2,\frac{1}{2},\frac{7}{8})$ && Coprime &&$(4,\frac{3}{4},\frac{21}{32})$ & $(5,\frac{4}{5},\frac{18}{25})$ & (1,1,1) && Coprime\\
$(2,\frac{1}{2},\frac{3}{8})$ & $(3,\frac{2}{3},\frac{5}{9})$ & $(1,1,1)$ && Coprime &&$(1,0,0)$ & $(6,\frac{1}{6},\frac{55}{72})$ & $(5,\frac{1}{5},\frac{23}{25})$ && Coprime\\
$(1,0,0)$ & $(4,\frac{1}{4},\frac{21}{32})$ & $(3,\frac{1}{3},\frac{8}{9})$ && Coprime && $(4,\frac{1}{4},\frac{21}{32})$ & $(6,\frac{1}{3},\frac{5}{9})$ & $(2,\frac{1}{2},\frac{3}{8})$ && Complete\\
$(3,\frac{1}{3},\frac{5}{9})$ & $(4,\frac{1}{2},\frac{5}{8})$ & $(1,1,1)$ && Complete && $(5,\frac{2}{5},\frac{12}{25})$ & $(6,\frac{1}{2},\frac{17}{24})$ & $(1,1,2)$ && Prop. \ref{prop-rank6adHoc}\\
$(3,\frac{2}{3},\frac{5}{9})$ & $(4,\frac{3}{4},\frac{21}{32})$ & $(1,1,1)$ && Coprime && $(5,\frac{3}{5},\frac{12}{25})$ & $(6,\frac{2}{3},\frac{5}{9})$ & $(1,1,1)$ && Complete\\
$(1,0,0)$ & $(5,\frac{1}{5},\frac{18}{25})$ & $(4,\frac{1}{4},\frac{29}{32})$ && Coprime && $(5,\frac{4}{5},\frac{18}{25})$ & $(6,\frac{5}{6},\frac{55}{72})$ & $(1,1,1)$ && Coprime\\
$(3,\frac{1}{3},\frac{5}{9})$ &$(5,\frac{2}{5},\frac{12}{25})$ & $(2,\frac{1}{2},\frac{3}{8})$ && Coprime \\
\end{longtable}
\end{center}

Observing that $\xi''$ is always stable in Table \ref{table-slopeClose} completes the proof of Lemma \ref{lem-extremalExist} as promised.  The triples labelled ``Coprime'' are all generically $\mu$-stable by Corollary \ref{cor-slopeStable}.  We turn next to the triples labelled ``Complete.''

\begin{definition}
An admissible decomposition $\Xi$ is called \emph{complete} if the general $E\in M(\xi)$ can be expressed as an extension $$0\to F \to E \to Q\to 0$$ with $F\in M(\xi')$ and $Q\in M(\xi'')$.
\end{definition}

\begin{remark} Suppose $\Xi$ is admissible and generically semistable.  Recall the universal extension sheaf $E_s/S$ discussed preceding Definition \ref{def-stableTriple}.  If $U\subset S$ is the open subset parameterizing semistable sheaves, then $\Xi$ is complete if and only if the moduli map $U\to M(\xi)$ is dominant.  By generic smoothness, $E_s/U$ is a complete family of semistable sheaves over a potentially smaller dense open subset.
\end{remark}

If $\xi$ is stable, then the general sheaf in $M(\xi)$ is $\mu$-stable by a result of Dr\'{e}zet and Le Potier \cite[4.12]{DLP}.  Thus if $\Xi$ is complete, then $\Xi$ is generically $\mu$-stable.

\begin{proposition}\label{prop-completeTriples}
Let $\Xi$ be one of the three triples in Table \ref{table-slopeClose} labelled ``Complete.'' Then $\Xi$ is complete, and in particular generically $\mu$-stable.
\end{proposition}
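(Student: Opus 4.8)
The plan is to verify \emph{completeness} for the three triples by a dimension count: showing that the open subset $U\subset S$ of the universal extension space parameterizing (semi)stable sheaves maps dominantly onto $M(\xi)$. Since the moduli spaces $M(\xi')$, $M(\xi'')$, and $M(\xi)$ are all irreducible of known dimension $r^2(2\Delta-1)+1$, and the relative $\mathbb{P}\Ext^1(Q,F)$-bundle $S$ over $M(\xi')\times M(\xi'')$ has fibers of dimension $-\chi(\xi'',\xi')-1$ (by Lemma \ref{existenceOfExtensions}), I can compute $\dim S$ purely from the numerical data in Table \ref{table-slopeClose}. The strategy is to show that $\dim S \geq \dim M(\xi)$ and that the general fiber of the moduli map $U\to M(\xi)$ has the expected dimension, so that the map is dominant.

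First I would establish that each of these three triples is generically $\mu$-semistable, so that $U$ is nonempty; for the ``Complete'' triples this does not follow from the coprime Corollary \ref{cor-slopeStable}, so I would either exhibit a single semistable extension directly or invoke Proposition \ref{prop-slopeSemistable} to get generic $\mu$-semistability and then argue that the general extension is in fact Gieseker semistable. Next I would compute $\dim S = \dim M(\xi') + \dim M(\xi'') + \ext^1(Q,F) - 1$ using Riemann-Roch, and compare with $\dim M(\xi)$. To prove dominance I need an upper bound on the fiber dimension of the moduli map $U\to M(\xi)$: given a general stable $E$ in the image, the sheaves $F\subset E$ lying in $M(\xi')$ with $E/F\in M(\xi'')$ are controlled by $\Hom$ and the automorphisms, and the fiber over $[E]$ has dimension at most $\hom(Q,E)-\dim\Aut$ contributions. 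Putting these together, if the source and target have equal dimension and the generic fiber is finite (or the source dominates for dimension reasons), completeness follows.

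The main obstacle I anticipate is controlling the fiber of the moduli map $U\to M(\xi)$, equivalently bounding the dimension of the family of distinct subsheaf filtrations $0\to F\to E\to Q\to 0$ of a fixed general $E$. A naive dimension count shows $\dim S$ and $\dim M(\xi)$ agree, but dominance requires ruling out that $U\to M(\xi)$ contracts $S$ onto a proper subvariety, which can happen if a general extension $E$ admits a positive-dimensional family of distinct sub-objects $F$ of the right type. I would handle this by checking, for each of the three explicit triples, that for a general $E\in M(\xi)$ constructed as such an extension, the subsheaf $F$ is essentially unique up to the expected automorphisms --- that is, that $\hom(F,E)$ and $\hom(E,Q)$ take their minimal values forced by stability. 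Since these triples have small rank and explicit Chern characters, this reduces to a finite check using the Riemann-Roch formula and the stability of $F$, $Q$, and the general $E$.

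Because only three triples appear, each with completely explicit invariants, I expect the computation to be a direct case-by-case verification rather than a uniform argument. The cleanest route is likely to show in each case that $\dim S = \dim M(\xi)$ and that the differential of the moduli map $U\to M(\xi)$ is generically an isomorphism at a well-chosen point, for instance by exhibiting one extension $E$ whose tangent space $\Ext^1(E,E)$ receives the tangent space to $S$ isomorphically; generic smoothness then upgrades this to dominance, and irreducibility of $M(\xi)$ finishes the proof that the general member of $M(\xi)$ arises as such an extension.
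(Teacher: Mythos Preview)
Your dimension-count strategy has a concrete numerical error that breaks the argument. You assert that ``a naive dimension count shows $\dim S$ and $\dim M(\xi)$ agree,'' but this is false for all three triples. For instance, take $\Xi=((4,\tfrac14,\tfrac{21}{32}),(6,\tfrac13,\tfrac59),(2,\tfrac12,\tfrac38))$: here $\dim M(\xi')=6$, $\dim M(\xi'')=0$ (the quotient is the exceptional bundle $T_{\P^2}(-1)$), and $-\chi(\xi'',\xi')=3$, so $\dim S=6+0+2=8$, while $\dim M(\xi)=5$. The other two triples give $\dim S=7,6$ against $\dim M(\xi)=5,5$. In particular your proposed endgame---showing the Kodaira--Spencer map $T_sS\to \Ext^1(E_s,E_s)$ is an isomorphism---is impossible, since $\dim T_sS>\dim \Ext^1(E_s,E_s)$ at any stable point. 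Bounding the fibre of $U\to M(\xi)$ from above is exactly the hard part, and it amounts to showing that a general map $E\to Q$ is \emph{surjective}; you cannot get this from dimension alone.

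The paper takes the opposite direction: rather than pushing extensions forward to $M(\xi)$, it starts with a general $\mu$-stable $E\in M(\xi)$ and produces the filtration directly. For the first two triples one computes $\chi(\xi,\xi'')>0$, so for any $Q\in M(\xi'')$ stability forces $\hom(E,Q)>0$; picking a nonzero $f:E\to Q$, slope-closeness \ref{cond-slopeClose} and discriminant minimality \ref{cond-discMinimal} pin down the invariants of $\ker f$ and $\im f$ and force $f$ to be surjective with $\ker f\in M(\xi')$. The third triple $((5,\tfrac35,\tfrac{12}{25}),(6,\tfrac23,\tfrac59),(1,1,1))$ has $\chi(\xi,\xi'')=0$, so this step fails for generic $Q$; here the paper observes that the jumping locus $D_E=\{Q:\hom(E,Q)\neq 0\}\subset M(\xi'')$ is either everything or a nonempty effective divisor, so \emph{some} $Q$ admits a nonzero map from $E$, and the argument proceeds as before. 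Your proposal has no mechanism to handle this $\chi=0$ case.
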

\begin{proof}
First suppose $\Xi=(\xi',\xi,\xi'')$ is one of $$((3,\tfrac{1}{3},\tfrac{5}{9}),(4,\tfrac{1}{2},\tfrac{5}{8}),(1,1,1)) \qquad \textrm{or} \qquad ((4,\tfrac{1}{4},\tfrac{21}{32}),(6,\tfrac{1}{3},\tfrac{5}{9}),(2,\tfrac{1}{2},\tfrac{3}{8})).$$  Let $E$ be a $\mu$-stable sheaf of character $\xi$, and let $Q\in M(\xi'')$ be semistable.  We have $\chi(E,Q)>0$ in either case, which implies $\hom(E,Q)>0$ by stability.  Pick a nonzero homomorphism $f:E\to Q$, and let $R\subset Q$ be the image of $f$.   By stability considerations, $R$ must have the same rank and slope as $Q$, and $\Delta(R)\geq \Delta(Q)$.  Letting $F\subset E$ be the kernel of $f$, we find that $\rk(F) = \rk(\xi')$, $\mu(F)=\mu(\xi')$, and $\Delta(F)\leq \Delta(\xi')$, with equality if and only if $f$ is surjective.  Furthermore, $F$ is $\mu$-semistable.  Indeed, if there is a subsheaf $G\subset F$ with $\mu(G)>\mu(F)$, then $\mu(G)\geq \mu(E)$ by slope-closeness \ref{cond-slopeClose}, so $G\subset E$ violates $\mu$-stability of $E$.  Then discriminant minimality \ref{cond-discMinimal} forces $\ch F = \xi'$. Furthermore, since $\rk(\xi')$ and $c_1(\xi')$ are coprime, $F$ is actually semistable.  Thus $E$ is expressed as an extension $$0\to F\to E\to Q\to 0$$ of semistable sheaves as required.

For the final triple $((5,\frac{3}{5},\frac{12}{25}),(6,\frac{2}{3},\frac{5}{9}),(1,1,1))$ a slight modification to the previous argument is needed.  Fix a $\mu$-stable sheaf $E$ of character $\xi$.  This time $\chi(\xi,\xi'')=0$, so the expectation is that if $Q\in M(\xi'')$ is general, then there is no nonzero map $E\to Q$.  Consider the locus $$D_E = \{Q\in M(\xi''):\hom(E,Q)\neq 0\}.$$ Then either $D_E = M(\xi'')$ or $D_E$ is an effective divisor, in which case we can compute its class to show $D_E$ is nonempty.  Either way, there is some $Q\in M(\xi'')$ which admits a nonzero homomorphism $E\to Q$.  The argument can now proceed as in the previous cases.
\end{proof}

The next proposition treats the last remaining case, completing the proof of Theorem \ref{thm-slopeCloseStable}.

\begin{proposition}\label{prop-rank6adHoc}
The triple $\Xi =(\xi',\xi,\xi'')=((5,\frac{2}{5},\frac{12}{25}),(6,\frac{1}{2},\frac{17}{24}), (1,1,2))$ is generically $\mu$-stable
\end{proposition}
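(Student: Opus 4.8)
The plan is to produce a single $\mu$-stable extension, which by Definition~\ref{def-stableTriple} suffices. Here $\xi'=(5,\frac{2}{5},\frac{12}{25})$ is the rank-$5$ exceptional character, so $F:=E_{2/5}$ is the rigid exceptional bundle and $M(\xi')$ is a single point, while a general $Q\in M(\xi'')=M((1,1,2))$ is a twisted ideal sheaf $I_Z(1)$ of two points; by Lemma~\ref{existenceOfExtensions}, $\ext^1(Q,F)=-\chi(\xi'',\xi')=11$. By Proposition~\ref{prop-slopeSemistable} the general extension $E$ is $\mu$-semistable, so I only need to rule out strict $\mu$-semistability for general $Z$ and general extension class $e\in\Ext^1(Q,F)$. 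Suppose $E$ is not $\mu$-stable. Choosing a saturated slope-$\frac{1}{2}$ subsheaf $G\subset E$ of minimal rank, I may assume $G$ is $\mu$-stable and $E/G$ is $\mu$-semistable of slope $\frac{1}{2}$; since $c_1$ must be integral, $\rk(G)\in\{2,4\}$.

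First I analyze the shape of $G$. Because $\mu(G)=\frac{1}{2}>\frac{2}{5}=\mu(F)$ and $F$ is stable, $\Hom(G,F)=0$, so the composite $G\to E\to Q$ is nonzero; its image $G_Q\subseteq Q$ has rank $1$ and the kernel $G_F=G\cap F$ is saturated in $F$. Comparing first Chern classes forces $c_1(G_Q)=1$, so $G_Q=I_W(1)$ for some $W\supseteq Z$, together with $c_1(G_F)=0$ (if $\rk G=2$) or $c_1(G_F)=1$ (if $\rk G=4$). The crucial constraint is that $E/G$ is $\mu$-semistable of slope $\frac{1}{2}$ and hence has $\Delta(E/G)\geq\frac{3}{8}$, the minimal discriminant of such a sheaf; since discriminants are additive among the slope-$\frac{1}{2}$ factors, this bounds $\Delta(G)$ from above, which in turn bounds the length of $W$ and the discriminant of $G_F$. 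Thus only finitely many numerical types of $(G_F,G_Q)$ occur. In the rank-$2$ case $G_F$ is a saturated rank-$1$ subsheaf of a bundle, hence the line bundle $\OO$, and the bound yields $\ell(W)-\ell(Z)\leq 1$.

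The existence of the embedding $G\hookrightarrow E$ is equivalent to the statement that the image of $e$ under the restriction $\Ext^1(Q,F)\to\Ext^1(G_Q,F)$ lies in the image of the map $\Ext^1(G_Q,G_F)\to\Ext^1(G_Q,F)$ induced by $G_F\hookrightarrow F$. Since $F$ is locally free, $\Ext^1(\OO_p,F)=0$, so each restriction map $\Ext^1(Q,F)\to\Ext^1(G_Q,F)$ is injective; combined with explicit upper bounds for $\ext^1(G_Q,G_F)$ coming from Riemann--Roch, this lets me bound, for each numerical type, the locus of bad classes $e$ by the dimension of an incidence variety, namely the dimension of the family of pairs $(G_F,G_Q)$ plus the dimension of the image subspaces. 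For general $Z$ each such bound is strictly less than $\dim\Ext^1(Q,F)=11$ (the rank-$2$, $\ell(W)=\ell(Z)+1$ subcase being the tightest), so the union over the finitely many types is a proper closed subset of $\PP\Ext^1(Q,F)=\PP^{10}$. A general $e$ therefore yields a $\mu$-stable $E$.

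I expect the main obstacle to be the rank-$4$ case of this count. There $G_F$ is a saturated rank-$3$ subsheaf of $E_{2/5}$ of slope $\frac{1}{3}$, which is \emph{not} an exceptional slope, so pinning down the possible characters of $G_F$ requires the Dr\'ezet--Le Potier classification to bound its discriminant from below (equivalently, to bound $c_2(G_F)$ from below for subsheaves of $E_{2/5}$), and the family of such subsheaves must be estimated via the dimensions $\hom$ of the relevant maps into $E_{2/5}$. Verifying that the resulting count stays below $11$ — using the quotient-semistability bound $\Delta(E/G)\geq\frac{3}{8}$ to keep both $\ell(W)$ and $\Delta(G_F)$ small — is the delicate point; the rank-$2$ analysis and the injectivity input $\Ext^1(\OO_p,F)=0$ are comparatively routine.
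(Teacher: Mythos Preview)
Your plan is sound and, once completed, gives a genuinely different proof from the paper's. The paper proceeds in two stages: first a Shatz-stratum codimension argument in the parameter space $S$ (using $\Ext_\pm$ and Kodaira--Spencer maps) to obtain generic Gieseker semistability, and then a comparison of the injective moduli map $S\hookrightarrow M^s(\xi)$ against the Shatz strata of non-$\mu$-stable sheaves in $M^s(\xi)$; the last stratum, corresponding to quotients $E\twoheadrightarrow T_{\P^2}(-1)$, is handled by an explicit computation with resolutions showing that the contraction $\Hom(F,T_{\P^2}(-1))\to\Ext^1(Q,T_{\P^2}(-1))$ is injective for general $e$. Your approach bypasses all of this with a direct incidence count in $\Ext^1(Q,F)\cong\C^{11}$.

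The rank-$4$ case you flagged as delicate is in fact the easier one. Since $G_F$ is saturated of rank $3$ and $c_1=1$ in the stable bundle $F$, the quotient $F/G_F$ is torsion-free of rank $2$ and slope $\tfrac{1}{2}$; any destabilizing quotient of it would destabilize $F$, so $F/G_F$ is $\mu$-semistable and $\Delta(F/G_F)\ge\tfrac{3}{8}$. Dually, no rank-$1$ or rank-$2$ subsheaf of $F$ has slope in $(\tfrac{1}{3},\tfrac{2}{5}]$, so $G_F$ is $\mu$-stable and $\Delta(G_F)\ge\delta(\tfrac{1}{3})=\tfrac{5}{9}$. Comparing $\ch_2$'s forces both bounds to be equalities: $\ch(G_F)=(3,\tfrac{1}{3},\tfrac{5}{9})$, $F/G_F\cong T_{\P^2}(-1)$, and the same bookkeeping forces $W=Z$, i.e.\ $G_Q=Q$. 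Thus the $G_F$'s are parameterized by $\P\Hom(F,T_{\P^2}(-1))=\P^2$ (every nonzero map is surjective by the same slope argument), and Riemann--Roch gives $\ext^1(Q,G_F)=7$, so the rank-$4$ bad locus has dimension at most $2+7=9<11$. The tightest case is actually rank $2$ with $\ell(W)=3$, where your count gives $5+2+3=10$.

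What your route buys: it is more elementary (no $\Ext_\pm$ formalism, no deformation theory), and the rank-$4$ incidence bound \emph{replaces} the paper's explicit matrix check---the bad locus $\{e:\Hom(E_e,T_{\P^2}(-1))\neq 0\}$ is exactly the union over $[\phi]\in\P^2$ of the $7$-dimensional kernels $\ker(\phi_*\colon\Ext^1(Q,F)\to\Ext^1(Q,T_{\P^2}(-1)))$, so dimension $\le 9$ already shows a general $e$ avoids it. What the paper's route buys: the Shatz-stratum framework is more systematic and would scale more gracefully when more numerical types appear.
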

\begin{proof}
Observe that $\xi'$ is the Chern character of the exceptional bundle $F = E_{2/5}$ and $\xi''$ is the Chern character of an ideal sheaf $Q=I_Z(1)$, where $Z$ has degree $2$.  Let $Q_{s''}/M(\xi'')$ be the universal family.  Then the projective bundle $S$ over $M(\xi'')$ with fibers $\P\Ext^1(Q_{s''},F)$ is smooth and irreducible of dimension $$\dim S = \dim M(\xi'')-\chi(\xi'',\xi')-1=14,$$ and there is a universal extension $E_s/S$.  Every $E_s$ is $\mu$-semistable by Proposition \ref{prop-slopeSemistable}.

A simple computation shows $\Hom(F,Q_{s''})=0$ for every $Q_{s''}$.  
If $E$ is any sheaf which sits as an extension $$0\to F\to E\to Q_{s''}\to 0,$$ then we apply $\Hom(F,-)$ to see $\Hom(F,E) \cong \Hom(F,F) = \C$.  Thus the homomorphism $F\to E$ is unique up to scalars, the sheaf $Q_{s''}$ is determined as the cokernel, and since $Q_{s''}$ is simple the corresponding extension class in $\Ext^1(Q_{s''},F)$ is determined up to scalars.  We find that distinct points of $S$ parameterize non-isomorphic sheaves.  A straightforward computation further shows that the Kodaira-Spencer map $T_sS \to \Ext^1(E_s,E_s)$ is injective for every $s\in S$.

We now proceed to show that the general $E_s$ also satisfies stronger notions of stability.

\emph{Step 1: the general $E_s$ is semistable}.  If $E_s$ is not semistable, it has a Harder-Narasimhan filtration of length $\ell\geq 2$, and all factors have slope $\frac{1}{2}$.  For each potential set of numerical invariants of a Harder-Narasimhan filtration, we check that the corresponding Shatz stratum of $s\in S$ such that the Harder-Narasimhan filtration has that form has positive codimension.  

There are only a handful of potential numerical invariants of the filtration.  A non-semistable $E_s$ has a semistable subsheaf $G$ with $\mu(G) = \mu(E) = \frac{1}{2}$ and $\Delta(G) < \Delta(E) = \frac{17}{24}$. Then the Chern character of $G$ must be one of 
\begin{equation}\tag{$\ast$} (2,\tfrac{1}{2},\tfrac{3}{8}), 
\qquad (4,\tfrac{1}{2},\tfrac{3}{8}),
 \qquad \textrm{or} \qquad (4,\tfrac{1}{2},\tfrac{5}{8}).\end{equation}
We can rule out the first two cases immediately by an ad hoc argument.  In either of these cases $E$ has a subsheaf isomorphic to $T_{\P^2}(-1)$.  Then there is a sequence $$0\to T_{\P^2}(-1)\to E\to R\to 0.$$ Applying $\Hom(F,-)$, we see $\Hom(F,T_{\P^2}(-1))$ injects into $\Hom(F,E)=\C$.  But $\chi(F,T_{\P^2}(-1))=3$, so this is absurd.
 
 Thus the only Shatz stratum we must consider is the locus of sheaves with a filtration $$0 \subset G_1 \subset G_2 = E_s$$ having $\ch \gr_1 = \zeta_1 := (4,\frac{1}{2},\frac{5}{8})$ and $\ch \gr_2 := \zeta_2 = (2,\frac{1}{2},\frac{7}{8})$.  Let $$\Sigma = \Flag(E/S;\zeta_1,\zeta_2) \xrightarrow{\pi} S$$ be the relative flag variety parameterizing sheaves with a filtration of this form.  By the uniqueness of the Harder-Narasimhan filtration, $\pi$ is injective, and its image is the Shatz stratum.  The differential of $\pi$ at a point $t = (s,G_1)\in \Sigma$ can be analyzed via the exact sequence $$0 \to \Ext^0_+(E_s,E_s)\to T_t \Sigma\xrightarrow{T_t\pi} T_s S\xrightarrow{\omega_+} \Ext_+^1(E_s,E_s).$$ We have $\Ext_+^0(E_s,E_s)=0$ by \cite[Proposition 15.3.3]{LePotierLectures}, so $T_t\pi$ is injective and $\pi$ is an immersion.  The codimension of the Shatz stratum near $s$ is at least  $\rk \omega_+$.

The map $\omega_+$ is the composition $T_sS\to \Ext^1(E_s,E_s)\to \Ext^1_+(E_s,E_s)$ of the Kodaira-Spencer map with the canonical map from the long exact sequence of $\Ext_{\pm}$.  The Kodaira-Spencer map is injective, and $\Ext^1(E_s,E_s)\to \Ext_+^1(E_s,E_s)$ is surjective since $\Ext_-^2(E_s,E_s)=0$.  We have $$\dim T_sS = 14, \qquad \ext^1(E_s,E_s)=16, \qquad \textrm{and} \qquad  \ext^1_+(E_s,E_s)=-\chi(\gr_1,\gr_2) = 4,$$ so we conclude $\rk \omega_+\geq 2$.  Therefore the Shatz stratum is a proper subvariety of $S$.   We conclude $\Xi$ is generically semistable.

\emph{Step 2: the general $E_s$ is $\mu$-stable}.  Note that a semistable sheaf in $M(\xi)$ is automatically stable.  Then the moduli map $S\to M^s(\xi)$ is injective, and its image has codimension $2$ in $M^s(\xi)$.

If a sheaf $E\in M^s(\xi)$ is not $\mu$-stable, then there is a filtration $$0\subset G_1 \subset G_2 = E$$ such that the quotients $\gr_i$ are semistable of slope $\frac{1}{2}$ and $\Delta(\gr_1) > \Delta(E) > \Delta(\gr_2)$ (see the proof of \cite[Theorem 4.11]{DLP}).  Then as in the previous step $\zeta_2 = \ch(\gr_2)$ is one of the characters $(\ast)$, and $\zeta_1 = \ch(\gr_1)$ is determined by $\zeta_2$. For each of the three possible filtrations, the Shatz stratum in $M^s(\xi)$ of sheaves with a filtration of the given form has codimension at least $\ext^1_+(E,E) = -\chi(\gr_1,\gr_2)$.

When $\zeta_2 = (4,\frac{1}{2},\frac{3}{8})$ we compute $-\chi(\gr_1,\gr_2) = 6$, and when $\zeta_2 = (4,\frac{1}{2},\frac{5}{8})$ we have $-\chi(\gr_1,\gr_2) = 4$.  In particular, the corresponding Shatz strata have codimension bigger than $2$.  On the other hand, for $\zeta_2 = (2,\frac{1}{2},\frac{3}{8})$ we only find the stratum has codimension at least $2$, and it is a priori possible that it contains the image of $S\to M^s(\xi)$.

To get around this final problem, we must show that the general sheaf $E_s$ parameterized by $S$ does not admit a nonzero map $E_s\to T_{\P^2}(-1)$.  This can be done by an explicit calculation.  Put $Q = I_Z(1)$, where $Z = V(x,y^2)$.  By stability, $\Hom(Q,T(-1))=0$, so there is an exact sequence $$\xymatrix{ 0 \ar[r]& \Hom(E_s,T_{\P^2}(-1))\ar[r] & \Hom(F,T_{\P^2}(-1))\ar[r]^{f} \ar@{=}[d] & \Ext^1(Q,T_{\P^2}(-1))\ar@{=}[d]\\ &&\C^3&\C^{4}&}$$ and we must see $f$ is injective.  The map $f$ is the contraction of the canonical map  $$\Ext^1(Q,F) \te \Hom(F,T_{\P^2}(-1))\to \Ext^1(Q,T_{\P^2}(-1))$$ corresponding to the extension class of $E$ in $\Ext^1(Q,F)$.  This canonical map can be explicitly computed using the standard resolutions $$\xymatrix@R=1mm{ 0 \ar[r] &\OO_{\P^2}(-2)\ar[r] &\OO_{\P^2}^6 \ar[r] &F \ar[r] &0\\ 0\ar[r] &\OO_{\P^2}(-2) \ar[r]&\OO_{\P^2}(-1)\oplus \OO_{\P^2} \ar[r] & Q\ar[r]& 0\\ 0\ar[r]& \OO_{\P^2}(-1)\ar[r]& \OO_{\P^2}^3 \ar[r]& T_{\P^2}(-1)\ar[r]& 0}$$ with the special form of $Q$ simplifying the calculation.  Injectivity of $f$ for a general $E_s$ follows easily from this computation.  
\end{proof}

\section{Curves of extensions}\label{sec-curves} 

\subsection{General results} Let $F$ and $Q$ be sheaves, and suppose the general extension $E$ of $Q$ by $F$ is semistable of Chern character $\xi$.  In this section, we study the moduli map $$\P \Ext^1(Q,F)\dashrightarrow M(\xi).$$  In particular, we would like to be able to show this map is nonconstant.  

\begin{definition}
Let $\Xi$ be a generically semistable admissible decomposition.  We say $\Xi$ \emph{gives curves} if for a general $F\in M(\xi')$ and $Q\in M(\xi'')$, the map $\P \Ext^1(Q,F)\dashrightarrow M(\xi)$ is nonconstant.
\end{definition}

There are three essential ways that  $\Xi$ could fail to give curves.
\begin{enumerate}
\item If $-\chi(\xi'',\xi') = 1$, then $\P\Ext^1(Q,F)$ is a point.
\item Sheaves parameterized by $\P \Ext^1(Q,F)$ might all be strictly semistable and $S$-equivalent.  
\item The sheaves parameterized by $\P\Ext^1(Q,F)$ might all be isomorphic.
\end{enumerate}
Possibility (1) is easy to check for any given triple.  If $\Xi$ is generically stable, then possibility (2) cannot arise when $F$ and $Q$ are general, so this is also easy to rule out.  The third case requires the most work to deal with.

\begin{lemma}\label{lem-nonIsoExtensions}
Let $F$ and $Q$ be simple sheaves with $\Hom(F,Q) = 0$.  Then distinct points of $\P\Ext^1(Q,F)$ parameterize nonisomorphic sheaves.
\end{lemma}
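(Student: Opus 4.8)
The plan is to reconstruct the inclusion $F\hookrightarrow E$ and the quotient $E\twoheadrightarrow Q$ canonically from $E$ alone (up to scalar), so that any isomorphism between two extension sheaves is forced to be a morphism of short exact sequences, after which functoriality of $\Ext$ pins down the extension classes up to scalar. First I would record the two key vanishing/rigidity facts. Applying $\Hom(F,-)$ to an extension $0\to F\to E\to Q\to 0$ and using $\Hom(F,Q)=0$ together with simplicity of $F$ gives $\Hom(F,E)\cong\Hom(F,F)=\C$, so the inclusion $i\colon F\to E$ is the unique subobject map up to scalar. Dually, applying $\Hom(-,Q)$ and using $\Hom(F,Q)=0$ together with simplicity of $Q$ gives $\Hom(E,Q)\cong\Hom(Q,Q)=\C$, so the quotient map $p\colon E\to Q$ is unique up to scalar.

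Next I would take two extension classes $e,e'\in\Ext^1(Q,F)$ with extension sheaves $E,E'$ and inclusions $i,i'$ and quotients $p,p'$, and suppose there is an isomorphism $\phi\colon E\to E'$. Because $\Hom(F,E')\cong\C$ is spanned by $i'$, the composite $\phi\circ i$ must equal $\lambda i'$ for some scalar $\lambda$; since $\phi\circ i$ is injective, $\lambda\neq 0$. Because $\Hom(E,Q)\cong\C$ is spanned by $p$, the composite $p'\circ\phi$ must equal $\mu p$ for some scalar $\mu$; since $p'\circ\phi$ is surjective, $\mu\neq 0$. These two relations say exactly that $\phi$ fits into a commutative ladder of short exact sequences whose left vertical arrow is multiplication by $\lambda$ on $F$ and whose right vertical arrow is multiplication by $\mu$ on $Q$.

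Finally I would invoke the standard functoriality of $\Ext^1(Q,F)$ (covariant in $F$, contravariant in $Q$) applied to this morphism of extensions: the class of the top row pushed forward along $\lambda$ agrees with the class of the bottom row pulled back along $\mu$, i.e. $\lambda\, e=\mu\, e'$ in $\Ext^1(Q,F)$. Since $\lambda,\mu\neq 0$, this exhibits $e$ and $e'$ as nonzero scalar multiples of one another, so they define the same point of $\P\Ext^1(Q,F)$; contrapositively, distinct points give nonisomorphic sheaves.

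The argument is largely formal, so there is no serious obstacle; the only points requiring care are the correct bookkeeping of the two one-dimensional $\Hom$ spaces (both genuinely using the hypothesis $\Hom(F,Q)=0$ in the two exact sequences) and the verification that $\lambda$ and $\mu$ are nonzero, which is what upgrades the relation $\lambda e=\mu e'$ from a possibly trivial identity to honest proportionality.
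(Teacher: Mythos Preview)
Your proof is correct and follows essentially the same approach as the paper: compute $\hom(F,E)=1$ and $\hom(E,Q)=1$ from simplicity and the vanishing $\Hom(F,Q)=0$, then conclude that the extension class is determined by $E$ up to scalar. The paper's proof is terser and simply asserts the final implication, whereas you carefully spell out the intermediate step of upgrading an isomorphism $E\cong E'$ to a morphism of short exact sequences and then invoking functoriality of $\Ext^1$; this is a welcome elaboration of what the paper leaves implicit.
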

\begin{proof}
Suppose $E$ is a sheaf which can be realized as an extension $$0\to F\to E\to Q\to 0.$$ Since $F$ is simple and $\Hom(F,Q) = 0$ we find $\hom(F,E) = 1$.  Similarly, since $Q$ is simple and $\Hom(F,Q)=0$, we have $\hom(E,Q)=1$.  This means that the corresponding class in $\P\Ext^1(Q,F)$ depends only on the isomorphism class of $E$.
\end{proof}

The lemma gives us a simple criterion for proving a triple $\Xi$ gives curves.

\begin{proposition}\label{prop-curveCriterion}
Let $\Xi=(\xi',\xi,\xi'')$ be an admissible, generically stable triple, and assume $\xi'$ is stable.  Suppose either 
\begin{enumerate}
\item $\Xi$ is not minimal, or
\item $-\chi(\xi'',\xi') \geq 2$.
\end{enumerate}
If $\Hom(F,Q)=0$ for a general $F\in M(\xi')$ and $Q\in M(\xi'')$, then $\Xi$ gives curves.
\end{proposition}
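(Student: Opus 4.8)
The plan is to rule out, under the stated hypotheses, each of the three ways that $\Xi$ could fail to give curves, after first securing that $\P\Ext^1(Q,F)$ is positive-dimensional. For general $F\in M(\xi')$ and $Q\in M(\xi'')$, the hypothesis that $\xi'$ is stable together with condition \ref{cond-Qstable} guarantees that $F$ and $Q$ are stable, hence simple; I will use this repeatedly.

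First I would establish $\ext^1(Q,F)\geq 2$, which kills failure mode (1). In case (2) this is immediate: by simplicity $\hom(Q,F)=0$, and with $\ext^2(Q,F)=0$ as in Lemma \ref{existenceOfExtensions} we get $\ext^1(Q,F)=-\chi(\xi'',\xi')\geq 2$. In case (1) I would exploit the elementary-modification structure. Non-minimality means $\Xi$ is an iterated elementary modification of some admissible $\Theta_0\preceq\Xi$ with $\Theta_0\neq\Xi$, so at least one step is involved. A single elementary modification fixes $\xi'$ and replaces $\xi$ by $\xi-\ch\OO_p$, hence replaces $\xi''$ by $\xi''-\ch\OO_p$; since $\chi(\ch\OO_p,\xi')=\rk(\xi')>0$, each step raises $\ext^1(Q,F)=-\chi(\xi'',\xi')$ by $\rk(\xi')\geq 1$. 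As $\ext^1$ for $\Theta_0$ is already at least $1$ (a non-split extension exists, Lemma \ref{existenceOfExtensions}), we again obtain $\ext^1(Q,F)\geq 2$, so $\P\Ext^1(Q,F)$ is positive-dimensional.

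Next I would dispose of failure mode (3). Since $F$ and $Q$ are simple and $\Hom(F,Q)=0$ by hypothesis, Lemma \ref{lem-nonIsoExtensions} applies directly and shows that distinct points of $\P\Ext^1(Q,F)$ parameterize pairwise non-isomorphic sheaves. Finally, failure mode (2) is excluded by generic stability: because $\Xi$ is generically stable, the locus $U\subset\P\Ext^1(Q,F)$ on which the extension $E_s$ is Gieseker stable is a nonempty, hence dense, open subset of the positive-dimensional space $\P\Ext^1(Q,F)$. On $U$ the sheaves $E_s$ are stable and, by the previous step, pairwise non-isomorphic; since non-isomorphic stable sheaves determine distinct points of $M(\xi)$ (for stable sheaves $S$-equivalence coincides with isomorphism), the moduli map $U\to M(\xi)$ is injective. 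An injective map from a positive-dimensional variety is nonconstant, so $\Xi$ gives curves.

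The main obstacle is the bookkeeping in case (1): verifying that non-minimality forces $\ext^1(Q,F)\geq 2$ rather than merely $\geq 1$. This rests on the exact identity $\chi(\ch\OO_p,\xi')=\rk(\xi')$ and on $\rk(\xi')>0$, which together ensure that every elementary modification strictly increases the dimension of the extension space. Once $\dim\P\Ext^1(Q,F)\geq 1$ is in hand, the remaining steps are formal consequences of simplicity, the vanishing $\Hom(F,Q)=0$, and generic stability, so the conclusion follows.
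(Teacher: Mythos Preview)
Your proof is correct and follows essentially the same route as the paper: first reduce case (1) to case (2) by observing that each elementary modification strictly decreases $\chi(\xi'',\xi')$ (the paper phrases this as $\chi(\xi'',\xi')<\chi(\theta'',\theta')<0$ via Riemann--Roch and Lemma~\ref{existenceOfExtensions}), then invoke Lemma~\ref{lem-nonIsoExtensions} together with generic stability to conclude. One small imprecision: the vanishing $\Hom(Q,F)=0$ is not a consequence of simplicity but of stability and the slope inequality $\mu(\xi')<\mu(\xi'')$ (or of $Q$ being torsion and $F$ torsion-free); this is exactly how Lemma~\ref{existenceOfExtensions} obtains it.
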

\begin{proof}
If $\Xi$ is not minimal, then $-\chi(\xi'',\xi')\geq 2$ holds automatically.  Indeed, since $\Xi$ is not minimal it is an elementary modification of another admissible triple $\Theta = (\theta',\theta,\theta'')$.  Then $\chi(\xi'',\xi')<\chi(\theta'',\theta')<0$ by the Riemann-Roch formula and Lemma \ref{existenceOfExtensions}.

Since $\xi'$ and $\xi''$ are stable, we can choose stable sheaves $F\in M(\xi')$ and $Q\in M(\xi'')$ such that $\Hom(F,Q) = 0$ and the general extension of $Q$ by $F$ is stable.  By Lemma \ref{lem-nonIsoExtensions}, $\Xi$ gives curves. 
\end{proof}

We also observe that elementary modifications behave well with respect to the notion of giving curves.

\begin{lemma}\label{lem-curveBump}
Suppose $\Xi$ is admissible and generically $\mu$-stable.  If $\Xi \preceq \Theta$ and $\Xi$ gives curves, then $\Theta$ gives curves. 
\end{lemma}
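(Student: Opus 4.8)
The plan is to reduce to a single elementary modification and then transport the family of extensions produced by $\Xi$ over to $\Theta$ using Proposition-Definition \ref{def-elementaryModSequence}. Since $\Xi\preceq\Theta$ means that $\Theta$ is an iterated elementary modification of $\Xi$, and since generic $\mu$-stability propagates upward (an elementary modification of a $\mu$-stable sheaf is again $\mu$-stable, so any decomposition above a generically $\mu$-stable one is generically $\mu$-stable), it suffices to treat the case where $\Theta$ is the single elementary modification of $\Xi$; the general case follows by induction on the number of modifications. Write $F\in M(\theta')=M(\xi')$, and let $Q\in M(\xi'')$ be a general (hence $\mu$-stable) sheaf for which the moduli map $\P\Ext^1(Q,F)\dashrightarrow M(\xi)$ is nonconstant, as guaranteed by the hypothesis that $\Xi$ gives curves; as in all the cases where we verify this property (via Proposition \ref{prop-curveCriterion}) we may take $F,Q$ with $\Hom(F,Q)=0$. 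Fix a general point $p\in\P^2$ and a general surjection $Q\to\OO_p$ with kernel $Q'$, so that $Q'\in M(\theta'')$ is $\mu$-stable.

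First I would set up the transfer on extension classes. Applying $\Hom(-,F)$ to $0\to Q'\to Q\to\OO_p\to 0$ and using that $\Ext^1(\OO_p,F)=0$ (which holds because $F$ is locally free at the general point $p$) shows that the restriction map
\[
\phi\colon \Ext^1(Q,F)\longrightarrow \Ext^1(Q',F)
\]
induced by $Q'\hookrightarrow Q$ is injective. By Proposition-Definition \ref{def-elementaryModSequence}, for each extension $0\to F\to E\to Q\to 0$ the compatible elementary modification produces an extension $0\to F\to E'\to Q'\to 0$ whose class is precisely $\phi$ applied to the class of $E$, and whose middle term $E'$ is the elementary modification of $E$; in particular $E'\in M(\theta)$ and $E'$ is $\mu$-stable whenever $E$ is. Thus the nonconstant family of classes in $\P\Ext^1(Q,F)$ maps, through the linear embedding $\phi$, to a genuinely varying family of classes in $\P\Ext^1(Q',F)$.

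Next I would show the resulting $E'$ are pairwise nonisomorphic. Both $F$ and $Q'$ are simple, and $\Hom(F,Q')\hookrightarrow\Hom(F,Q)=0$, so Lemma \ref{lem-nonIsoExtensions} applies to $\P\Ext^1(Q',F)$: distinct classes give nonisomorphic sheaves. Since $\phi$ is injective and the starting family is nonconstant, the classes $\phi([E])$ are not all proportional, hence the sheaves $E'$ are not all isomorphic and the moduli map $\P\Ext^1(Q',F)\dashrightarrow M(\theta)$ is nonconstant for this particular $Q'$.

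The main obstacle is that this $Q'$, being an elementary modification of $Q$, is a \emph{special} member of $M(\theta'')$, whereas giving curves requires nonconstancy at a \emph{general} pair $(F,Q')$. To close this gap I would spread out as follows. Because $\Hom(F,Q')=0$ and $Q'$ is simple, Lemma \ref{lem-nonIsoExtensions} shows the moduli map is injective on every fiber $\P\Ext^1(Q',F)$ over the locus where $\Hom(F,Q')=0$; there the fiber has constant dimension $-\chi(\theta'',\theta')-1$ (using $\Hom(Q',F)=0$ by stability and $\Ext^2(Q',F)=0$ by \ref{cond-slopeDiff}), so nonconstancy on a fiber is equivalent to the purely numerical condition $-\chi(\theta'',\theta')\geq 2$, independent of the chosen $Q'$. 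This inequality holds: $\Xi$ giving curves forces $-\chi(\xi'',\xi')\geq 2$ (a single point could not map nonconstantly), and $-\chi$ only increases under elementary modification, as in the proof of Proposition \ref{prop-curveCriterion}. Finally, $\hom(F,-)$ is upper semicontinuous and vanishes at our special $Q'$, so it vanishes on a dense open subset of the irreducible variety $M(\theta'')$; together with $-\chi(\theta'',\theta')\geq 2$ this gives nonconstancy for general $F$ and $Q'$, so $\Theta$ gives curves. The torsion case $\rk(\xi'')=0$ is handled identically, since both $\Ext^1(\OO_p,F)=0$ and the inclusion $\Hom(F,Q')\hookrightarrow\Hom(F,Q)$ persist when $Q$ has rank zero.
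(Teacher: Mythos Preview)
Your overall strategy matches the paper's: reduce to a single elementary modification and transport the family of extensions through Proposition-Definition \ref{def-elementaryModSequence}. The setup with the injective map $\phi\colon\Ext^1(Q,F)\to\Ext^1(Q',F)$ is correct. However, there is a genuine gap when you pass from ``the classes $\phi([E])$ are not all proportional'' to ``the sheaves $E'$ are not all isomorphic''. You do this by invoking Lemma \ref{lem-nonIsoExtensions}, which requires $F$ and $Q'$ to be simple and $\Hom(F,Q')=0$. None of these hypotheses is part of the lemma: $\xi'$ is only assumed semistable \ref{cond-Fstable}, not stable, and the lemma is applied in the paper precisely in cases where $\xi'$ is semiexceptional (Proposition \ref{prop-sporadic} and Remark \ref{rem-Yoshioka}, with $\xi'=(2,0,0)$), so that the general $F$ is a direct sum of copies of a line bundle and is not simple. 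Your parenthetical ``as in all the cases where we verify this property (via Proposition \ref{prop-curveCriterion})'' is therefore false in those applications, where curves are obtained via \cite{CoskunHuizengaWoolf} instead.

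The paper sidesteps this with a direct recovery argument requiring no hypotheses on $F$ or $Q$: if $E$ is locally free at $p$, then $E$ is uniquely determined by its elementary modification $E'$ together with the point $p$, since $E$ is the unique intermediate sheaf $E'\subset E\subset (E')^{**}$ that is locally free at $p$ and agrees with $E'$ away from $p$. Hence $E\mapsto E'$ is injective on isomorphism classes over the open locus $U\subset\P\Ext^1(Q,F)$ of $\mu$-stable extensions locally free at $p$, and the nonconstancy of $U\to M(\xi)$ transfers immediately to nonconstancy of $\P\Ext^1(Q',F)\dashrightarrow M(\theta)$. Replacing your appeal to Lemma \ref{lem-nonIsoExtensions} with this observation removes the extra hypotheses and closes the gap.
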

\begin{proof}
Suppose $\Theta$ is obtained from $\Xi$ by a single elementary modification.  Let $F\in M(\xi')$ and $Q\in M(\xi'')$ be general.  Take $U \subset \P\Ext^1(Q,F)$ to be the dense open subset parameterizing $\mu$-stable sheaves $E$ which are locally free at a general fixed point $p\in \Supp Q$.  Let $Q\to \OO_p$ be a surjective homomorphism.  Given any extension $$0\to F \to E \to Q \to 0$$ corresponding to a point of $U$, we get an exact sequence of compatible elementary modifications $$0\to F\to E'\to Q'\to 0$$ as in Definition \ref{def-elementaryModSequence}.  As $E$ can be recovered from $E'$ and the map $U\to M(\xi)$ is nonconstant, we conclude that the map $\P\Ext^1(Q',F)\dashrightarrow M(\theta)$ is nonconstant.  Thus $\Theta$ gives curves.
\end{proof}

\subsection{Curves from coprime triples with large discriminant} Our next result provides the dual curves we will need to prove Theorem \ref{thm-asymptotic}.

\begin{theorem}\label{thm-curves}
Let $\Xi = (\xi',\xi,\xi'')$ be a coprime extremal triple, and suppose $\Delta(\xi)$ is sufficiently large, depending on $\rk(\xi)$ and $\mu(\xi)$.  Then $\Xi$ gives curves.
\end{theorem}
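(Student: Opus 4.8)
The plan is to invoke the machinery developed earlier in this section, reducing to a combination of Proposition \ref{prop-curveCriterion} and the elementary-modification transfer Lemma \ref{lem-curveBump}. Since $\Xi$ is a coprime extremal triple, Corollary \ref{cor-slopeStable} tells us (in the torsion-free case) that $\Xi$ is generically $\mu$-stable, so the hypotheses of the curve criterion are within reach. The only two remaining obstructions to giving curves are the numerical condition $-\chi(\xi'',\xi')\geq 2$ and the vanishing $\Hom(F,Q)=0$ for general $F\in M(\xi')$ and $Q\in M(\xi'')$. My strategy is to verify each of these using the large-discriminant hypothesis, and then quote Proposition \ref{prop-curveCriterion}.

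First I would handle the numerical obstruction. By Lemma \ref{existenceOfExtensions} we already know $\chi(\xi'',\xi')<0$ for an admissible torsion-free decomposition, so $-\chi(\xi'',\xi')\geq 1$ automatically. To upgrade this to $\geq 2$, I would use the Riemann-Roch expression
$$\chi(\xi'',\xi') = r'r''\bigl(P(\mu'-\mu'')-\Delta'-\Delta''\bigr)$$
together with the relation $r\Delta = r'\Delta'+r''\Delta''-\tfrac{r'r''}{r}(\mu'-\mu'')^2$. The quantities $r',\mu',\Delta',r'',\mu''$ depend only on $r$ and $\mu$ (as established in Lemma \ref{lem-extremalExist}), so as $\Delta(\xi)\to\infty$ we force $\Delta''\to\infty$. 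Since $P(\mu'-\mu'')$ is a fixed number (indeed $P(x)<1$ for the relevant range of $x\in(-3,0)$), the factor $P(\mu'-\mu'')-\Delta'-\Delta''$ becomes arbitrarily negative, so $-\chi(\xi'',\xi')$ grows without bound. In particular, for $\Delta(\xi)$ sufficiently large we certainly have $-\chi(\xi'',\xi')\geq 2$.

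Next I would establish $\Hom(F,Q)=0$ for general $F\in M(\xi')$ and $Q\in M(\xi'')$. Here $\mu(\xi')<\mu(\xi)<\mu(\xi'')$, so $\mu(F)<\mu(Q)$. If the general such $\Hom$ were nonzero, a nonzero map $F\to Q$ between stable sheaves of these slopes would either violate stability outright or force a nonzero homomorphism whose image contradicts the slope inequality; more carefully, since $\mu(F)<\mu(Q)$, stability of $Q$ forces any nonzero $F\to Q$ to have image of slope strictly less than $\mu(Q)$, contradicting stability of $F$ unless the map is zero. Thus $\Hom(F,Q)=0$ holds for \emph{all} stable $F,Q$ of these characters, not just the general ones. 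With all hypotheses of Proposition \ref{prop-curveCriterion} verified (using alternative (2), $-\chi(\xi'',\xi')\geq 2$), we conclude $\Xi$ gives curves in the torsion-free case.

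Finally, I would dispose of the torsion case, where $\rk(\xi'')=0$. The cleanest route is via Lemma \ref{lem-curveBump}: a torsion extremal triple with large $\Delta(\xi)$ should be an iterated elementary modification of a minimal one sharing the same $\xi'$ and $\xi''$-slope data, and the minimal torsion triples can be shown to give curves by a direct argument (the deferred torsion analysis of \S\ref{ssec-torsion}). The main obstacle I anticipate is not the torsion-free generic-stability step, which is already packaged in Corollary \ref{cor-slopeStable}, but rather making the ``sufficiently large $\Delta$'' threshold in the $-\chi(\xi'',\xi')\geq 2$ estimate genuinely uniform in a way compatible with the threshold implicitly required for $\xi''$ to be stable and for the extremal triple to exist (Lemma \ref{lem-extremalExist}); these bounds must be taken simultaneously, but since each is a finite lower bound depending only on $r$ and $\mu$, their maximum suffices.
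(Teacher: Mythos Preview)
Your argument for $\Hom(F,Q)=0$ contains a genuine error: the slope inequality runs the wrong way. When $F$ and $Q$ are stable with $\mu(F)<\mu(Q)$, there is no stability obstruction to a nonzero map $F\to Q$; the standard vanishing $\Hom(F,Q)=0$ holds when $\mu(F)>\mu(Q)$ (or when $\mu(F)=\mu(Q)$ and $F\not\cong Q$). Concretely, $\OO_{\P^2}\to\OO_{\P^2}(1)$ is a nonzero map between stable sheaves with $\mu(F)<\mu(Q)$. Your chain of inequalities actually says: the image $I$ satisfies $\mu(F)\leq\mu(I)\leq\mu(Q)$, which is no contradiction. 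So the vanishing you need is not automatic from stability, and this step of your proof fails.

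The paper handles this point by an elementary-modification descent. Starting from general $F\in M(\xi')$ and $Q\in M(\xi'')$ with $\Hom(F,Q)\neq 0$, one chooses a surjection $Q\to\OO_p$ such that the composition $F\to Q\to\OO_p$ is also surjective; applying $\Hom(F,-)$ to $0\to Q'\to Q\to\OO_p\to 0$ then shows $\hom(F,Q')<\hom(F,Q)$. Iterating, one reaches an extremal triple $\Theta\succeq\Xi_0$ (for some fixed starting triple) with $\Hom(F,Q)=0$ generically, so $\Theta$ gives curves by Proposition~\ref{prop-curveCriterion}, and then Lemma~\ref{lem-curveBump} transfers this to every $\Lambda\succeq\Theta$. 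The ``sufficiently large $\Delta(\xi)$'' in the statement is precisely what guarantees $\Xi\succeq\Theta$. Your handling of the numerical obstruction $-\chi(\xi'',\xi')\geq 2$ is fine (the paper instead notes that large $\Delta$ makes $\Xi$ non-minimal and uses alternative (1) of Proposition~\ref{prop-curveCriterion}), but you must replace your stability argument for the Hom vanishing with something along these lines.
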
 
\begin{proof}
The triple $\Xi$ is not minimal since $\Delta(\xi)$ is large.  By Proposition \ref{prop-curveCriterion}, we only need to show that if $F\in M(\xi')$ and $Q\in M(\xi'')$ are general and $\Delta(\xi)$ is sufficiently large, then $\Hom(F,Q) = 0$.  Fix a general $F\in M(\xi')$ an $Q\in M(\xi'')$.  If $\Hom(F,Q)\neq 0$, choose a nonzero homomorphism $F\to Q$.  We can find a surjective homomorphism $Q\to \OO_p$ such that $F\to Q\to \OO_p$ is also surjective.  Then applying $\Hom(F,-)$ to the elementary modification sequence $$0\to Q'\to Q\to \OO_p\to 0$$ we find that $\Hom(F,Q')$ is a proper subspace of $\Hom(F,Q)$.  Repeating this process, we can find some $\Theta \succeq \Xi$ such that $\Hom(F,Q)=0$ for general $F\in M(\theta')$ and $Q\in M(\theta'')$.  Then $\Theta$ gives curves, and by Lemma \ref{lem-curveBump} any $\Lambda \succeq \Theta$ also gives curves.
\end{proof}

\subsection{Curves from small rank triples} We now discuss extremal curves in the moduli space $M(\xi)$ when the rank is small.  For all but a handful of characters $\xi$ we can apply the next theorem.

\begin{theorem}\label{thm-smallRankCurves}\label{thm-curvessmall}
Let $\Xi = (\xi',\xi,\xi'')$ be an extremal  triple with $\rk(\xi)\leq 6$.  Suppose $\chi(\xi',\xi'')\leq 0$.  Then $\Xi$ gives curves.
\end{theorem}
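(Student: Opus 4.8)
The plan is to prove Theorem \ref{thm-smallRankCurves} by reducing it, via the machinery already developed, to the criterion of Proposition \ref{prop-curveCriterion}. The key observation is that \ref{prop-curveCriterion} gives two sufficient conditions for a triple to give curves: either the triple is non-minimal, or $-\chi(\xi'',\xi')\geq 2$. So I would first dispose of the non-minimal case and then concentrate on minimal triples, where I must produce the inequality $-\chi(\xi'',\xi')\geq 2$ together with the vanishing $\Hom(F,Q)=0$ for general $F\in M(\xi')$ and $Q\in M(\xi'')$.

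First I would reduce to the minimal case using Lemma \ref{lem-curveBump}. Since extremality is preserved under elementary modifications (by the lemma asserting this in \S\ref{sec-elementaryMod}), any extremal $\Xi$ lies above a minimal extremal triple $\Xi_0$. If I can show every minimal extremal triple with $\rk\leq 6$ satisfying the hypothesis $\chi(\xi',\xi'')\leq 0$ gives curves, then Lemma \ref{lem-curveBump} promotes this to all triples above it, provided generic $\mu$-stability holds; but generic $\mu$-stability for all extremal triples with $\rk(\xi)\leq 6$ is exactly Theorem \ref{thm-slopeCloseStable}. So the reduction is clean: it suffices to treat the minimal extremal triples listed in Table \ref{table-slopeClose} (in the torsion-free case; the torsion case is handled separately in \S\ref{ssec-torsion}).

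For the minimal triples, I would proceed in two steps. The \emph{first} step is to establish $-\chi(\xi'',\xi')\geq 2$. Here the hypothesis $\chi(\xi',\xi'')\leq 0$ should be the crucial input. By Riemann-Roch, $\chi(\xi',\xi'')$ and $\chi(\xi'',\xi')$ are governed by $P(\mu''-\mu')$ and $P(\mu'-\mu'')$ respectively together with the discriminant terms; using the slope-closeness \ref{cond-slopeClose} and discriminant-minimality \ref{cond-discMinimal} built into extremality, I expect the hypothesis $\chi(\xi',\xi'')\leq 0$ to force $-\chi(\xi'',\xi')$ to be at least $2$ rather than just positive. (Lemma \ref{existenceOfExtensions} already gives $-\chi(\xi'',\xi')\geq 1$; the content is upgrading strict positivity to $\geq 2$.) This may reduce to a short finite check against the entries of Table \ref{table-slopeClose}. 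The \emph{second} step is the vanishing $\Hom(F,Q)=0$ for general $F,Q$. Since $\mu(F)=\mu'<\mu=\mu(\xi)<\mu''=\mu(Q)$ and both $F,Q$ are stable, any nonzero map $F\to Q$ would violate stability because $\mu(F)<\mu(Q)$ forces the image to be a proper destabilizing quotient of $F$ or subsheaf of $Q$; more precisely, $\mu$-stability of both makes $\Hom$ between stable sheaves of strictly increasing slope vanish. I would argue this directly from the slope inequality $\mu'<\mu''$, which follows from \ref{cond-Fslope} and the torsion-free hypothesis $\mu(\xi)<\mu''$.

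The main obstacle I anticipate is the first step: proving $-\chi(\xi'',\xi')\geq 2$ uniformly. The inequality $\chi(\xi',\xi'')\leq 0$ is an asymmetric condition, and converting it into information about the \emph{reversed} Euler pairing $\chi(\xi'',\xi')$ requires care, because the two differ by the noncommutative Riemann-Roch terms. Concretely, $\chi(\xi',\xi'')-\chi(\xi'',\xi') = rr''\bigl(P(\mu''-\mu')-P(\mu'-\mu'')\bigr) = 3rr''(\mu''-\mu')$, which is strictly positive, so $\chi(\xi'',\xi')<\chi(\xi',\xi'')\leq 0$; the real work is bounding the gap from below by enough to clear $2$. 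I expect this to follow because $\mu''-\mu'$ is bounded below (it is a nonzero rational with denominators controlled by $\rk(\xi)\leq 6$) while $rr''$ is a positive integer, so $3rr''(\mu''-\mu')$ is a concrete rational quantity one can check exceeds $1$ in every relevant case; combined with integrality of $\chi(\xi'',\xi')$ this yields $-\chi(\xi'',\xi')\geq 2$. The remaining cases where $-\chi(\xi'',\xi')=1$ would have to be isolated and handled by the non-minimality branch of Proposition \ref{prop-curveCriterion} or by a direct geometric argument, but I expect the hypothesis $\chi(\xi',\xi'')\leq 0$ to exclude exactly these pathological low-discriminant cases.
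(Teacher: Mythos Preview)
Your second step contains a genuine error: the claim that $\Hom(F,Q)=0$ follows from $\mu$-stability of $F$ and $Q$ together with $\mu(F)<\mu(Q)$ is false. Stability kills homomorphisms in the \emph{opposite} direction: if $F,Q$ are $\mu$-stable with $\mu(F)>\mu(Q)$, then $\Hom(F,Q)=0$. When $\mu(F)<\mu(Q)$ there is no such conclusion; the simplest counterexample is $\Hom(\OO_{\P^2},\OO_{\P^2}(1))=H^0(\OO_{\P^2}(1))\cong\C^3$. In the actual triples at hand (e.g.\ $\xi'=(1,0,0)$, $\xi''$ a twist of an ideal sheaf), the vanishing $\Hom(F,Q)=0$ holds only for \emph{general} $Q$, not for all $Q$, so this is genuinely a cohomological statement that cannot be extracted from slope inequalities alone. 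The paper deals with this by first reducing (via Lemma \ref{lem-curveBump}) not to the minimal triples of Table \ref{table-slopeClose} but to the finitely many ``boundary'' triples $\Xi$ for which every strictly smaller $\Theta\prec\Xi$ has $\chi(\theta',\theta'')>0$; these are listed in a separate table, and $\Hom(F,Q)=0$ is then verified case by case (by resolutions or by computer).

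Your reduction step is also slightly off: if $\Xi$ satisfies $\chi(\xi',\xi'')\leq 0$, the minimal $\Xi_0\preceq\Xi$ need not, so proving the statement only for minimal triples satisfying the hypothesis does not cover $\Xi$. The correct reduction is to the smallest $\Xi_1\preceq\Xi$ still satisfying $\chi(\xi_1',\xi_1'')\leq 0$. That said, your Riemann--Roch computation $\chi(\xi',\xi'')-\chi(\xi'',\xi')=3r'r''(\mu''-\mu')=3(r'c_1''-r''c_1')\geq 3$ is a clean uniform way to get $-\chi(\xi'',\xi')\geq 2$ directly from the hypothesis, without reducing at all; the paper simply reads this inequality off its table. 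If you replace your incorrect $\Hom$-vanishing argument with the finite case check, the rest of your outline goes through.
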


\begin{proof} As with the proof of Theorem \ref{thm-slopeCloseStable}, we assume $0<\mu(\xi)\leq 1$.  By Lemma \ref{lem-curveBump}, it is enough to consider triples $\Xi$ such that any admissible $\Theta$ with $\Theta \prec \Xi$ has $\chi(\theta',\theta'')>0$.  We also assume $\Xi$ is torsion-free, and handle the torsion case in \S\ref{ssec-torsion}.
We list the relevant triples together with $\chi(\xi',\xi'')$ in Table \ref{table-curves}.

\begin{center}
\renewcommand*{\arraystretch}{1.3}
\begin{longtable}{cccccccccccc}
\caption[]{Triples to be considered for the proof of Theorem \ref{thm-smallRankCurves}.}\label{table-curves}\\
\toprule 
$\xi'$ & $\xi$ & $\xi''$ && $\chi(\xi',\xi'')$ &$\qquad$ & $\xi'$ & $\xi$ & $\xi''$ && $\chi(\xi',\xi'')$\\\midrule
\endfirsthead
\multicolumn{12}{l}{{\small \it continued from previous page}}\\
\toprule
$\xi'$ & $\xi$ & $\xi''$ && $\chi(\xi',\xi'')$ &$\qquad$ & $\xi'$ & $\xi$ & $\xi''$ && $\chi(\xi',\xi'')$\\\midrule \endhead
\bottomrule \multicolumn{12}{r}{{\small \it continued on next page}} \\ \endfoot
\bottomrule
\endlastfoot
$(1,0,0)$ & $(2,\frac{1}{2},\frac{11}{8})$ & $(1,1,3)$ && $0$ &&$(2,\frac{1}{2},\frac{3}{8})$&$(5,\frac{3}{5},\frac{17}{25})$&$(3,\frac{2}{3},\frac{8}{9})$ &&0\\
$(1,0,0)$ & $(3,\frac{1}{3},\frac{11}{9})$ & $(2,\frac{1}{2},\frac{15}{8})$ && $0$ &&$(4,\frac{3}{4},\frac{21}{32})$ & $(5,\frac{4}{5},\frac{18}{25})$ & (1,1,1) && $-1$\\
$(2,\frac{1}{2},\frac{3}{8})$ & $(3,\frac{2}{3},\frac{8}{9})$ & $(1,1,2)$ && $-1$ &&$(1,0,0)$ & $(6,\frac{1}{6},\frac{79}{72})$ & $(5,\frac{1}{5},\frac{33}{25})$ && 0\\
$(1,0,0)$ & $(4,\frac{1}{4},\frac{37}{32})$ & $(3,\frac{1}{3},\frac{14}{9})$ && 0 && $(4,\frac{1}{4},\frac{21}{32})$ & $(6,\frac{1}{3},\frac{13}{18})$ & $(2,\frac{1}{2},\frac{7}{8})$ && $-1$\\
$(3,\frac{1}{3},\frac{5}{9})$ & $(4,\frac{1}{2},\frac{7}{8})$ & $(1,1,2)$ && $-1$ && $(5,\frac{2}{5},\frac{12}{25})$ & $(6,\frac{1}{2},\frac{17}{24})$ & $(1,1,2)$ && $-2$\\
$(3,\frac{2}{3},\frac{5}{9})$ & $(4,\frac{3}{4},\frac{21}{32})$ & $(1,1,1)$ && 0 && $(5,\frac{3}{5},\frac{12}{25})$ & $(6,\frac{2}{3},\frac{13}{18})$ & $(1,1,2)$ && $-4$\\
$(1,0,0)$ & $(5,\frac{1}{5},\frac{28}{25})$ & $(4,\frac{1}{4},\frac{45}{32})$ && 0 && $(5,\frac{4}{5},\frac{18}{25})$ & $(6,\frac{5}{6},\frac{55}{72})$ & $(1,1,1)$ && $-2$\\
$(3,\frac{1}{3},\frac{5}{9})$ &$(5,\frac{2}{5},\frac{17}{25})$ & $(2,\frac{1}{2},\frac{7}{8})$ && $-1$ \\
\end{longtable}
\end{center}

Every triple in the table satisfies $\chi(\xi'',\xi')\leq -2$.  To apply Proposition \ref{prop-curveCriterion}, we need to see that if $F\in M(\xi')$ and $Q\in M(\xi'')$ are general, then $\Hom(F,Q)= 0$.  This can easily be checked case by case via standard sequences or by Macaulay2. We omit the details.\end{proof}

\subsection{Sporadic small rank triples}\label{ssec-sporadic} Here we discuss the few sporadic Chern characters not addressed by Theorem \ref{thm-smallRankCurves}.  Suppose $\xi$ has positive height and $\rk(\xi)\leq 6$, and let $\Xi = (\xi',\xi,\xi'')$ be extremal.  If $\Xi$ is torsion-free and $\chi(\xi',\xi'')>0$, then $\xi' = (1,0,0)$ and $$\xi\in \{(2,\tfrac{1}{2},\tfrac{7}{8}),(3,\tfrac{1}{3},\tfrac{8}{9}),(4,\tfrac{1}{4},\tfrac{29}{32}),(5,\tfrac{1}{5},\tfrac{23}{25}),(6,\tfrac{1}{6},\tfrac{67}{72})\}.$$  That is, $$\xi = (r,\tfrac{1}{r},P(-\tfrac{1}{r})+\tfrac{1}{r})$$ for some $r$ with $2\leq r \leq 6$.  In this case, we have $\chi(\xi',\xi) = 2$, so the general sheaf $E\in M(\xi)$ admits \emph{two} maps from $\OO_{\P^2}$.  Along the wall $W(\Xi)$, the destabilizing subobject of $E$ should therefore be $\OO_{\P^2}^2$ instead of $\OO_{\P^2}$.  Furthermore, assuming there are sheaves $E\in M(\xi)$ which are Bridgeland stable just outside $W(\Xi)$, the wall $W(\Xi)$ must be the collapsing wall.  We will show in the next section that $W(\Xi)$ is actually the Gieseker wall.  This implies that the primary edges of the ample and effective cones of divisors on $M(\xi)$ coincide.  Our study of the effective cone in \cite{CoskunHuizengaWoolf} easily implies the next result.

\begin{proposition}\label{prop-sporadic}
Let $r\geq 2$, and let $\Xi=(\xi',\xi,\xi'')$ be the admissible decomposition with $\xi' = (2,0,0)$ and $\xi = (r,\tfrac{1}{r},P(-\tfrac{1}{r})+\tfrac{1}{r})$.  Then $\Xi$ is complete, and it gives curves.
\end{proposition}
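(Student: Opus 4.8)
The plan is to establish completeness first and then deduce that $\Xi$ gives curves by a dimension count, the one new feature being that $F = \OO_{\PP^2}^{\oplus 2}$ is \emph{not} simple, so Lemma \ref{lem-nonIsoExtensions} and Proposition \ref{prop-curveCriterion} cannot be applied directly. I begin with the numerics. Since $\xi' = (2,0,0)$ is semiexceptional, $M(\xi')$ is the single point $F = \OO_{\PP^2}^{\oplus 2}$. A Riemann--Roch computation gives $\chi(\OO_{\PP^2},E) = 2$ and $\chi(E,\OO_{\PP^2}) = -1$ for $E \in M(\xi)$, whence $\chi(\xi'',\xi') = \chi(\xi,\xi') - \chi(\xi',\xi') = -2 - 4 = -6$. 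For any $Q \in M(\xi'')$ one has $\Hom(Q,F) = 0$ and $\Ext^2(Q,F) = 0$ by slope considerations, so $\ext^1(Q,F) = 6$ and $\P\Ext^1(Q,F) = \P^5$, independently of $r$. Finally $\dim M(\xi) = r^2 - r + 2$ and $\dim M(\xi'') = r^2 - r$, so the extension family has dimension $\dim M(\xi'') + 5 = \dim M(\xi) + 3$; the surplus $3$ will reappear as the dimension of the generic fiber of the moduli map.

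For completeness I would invoke the effective cone computation of \cite{CoskunHuizengaWoolf}. Since $\chi(\OO_{\PP^2},E) = 2$ and $\Ext^2(\OO_{\PP^2},E) = 0$ for every $E \in M(\xi)$ by Serre duality and stability, every sheaf in $M(\xi)$ satisfies $h^0(E) \geq 2$; the real content is that the \emph{general} $E$ has $h^0(E) = 2$, equivalently that the Brill--Noether locus $\{E : h^1(E) > 0\}$ is a proper subvariety. This is exactly the extremal effective divisor computed in \cite{CoskunHuizengaWoolf}. For such a general $E$, which is $\mu$-stable by \cite[4.12]{DLP}, the evaluation map $\OO_{\PP^2}^{\oplus 2} = H^0(E)\te\OO_{\PP^2} \to E$ is injective: a rank-one subsheaf $L \subset E$ has $\mu(L) < \tfrac{1}{r}$ by $\mu$-stability, hence $c_1(L) \leq 0$ and $h^0(L) \leq 1$, so two independent sections cannot factor through a common rank-one subsheaf. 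The cokernel $Q$ then has $\ch(Q) = \xi''$, and the genericity built into the \cite{CoskunHuizengaWoolf} construction guarantees $Q$ is semistable, that is, $Q \in M(\xi'')$. This realizes the general $E$ as an extension of $Q$ by $F$, so $\Xi$ is complete; since $\xi$ is stable, completeness also gives generic $\mu$-stability. I expect this cohomological genericity, together with the semistability of $Q$, to be the main obstacle: semistability of $Q$ is \emph{not} forced by that of $E$ alone, since a destabilizing quotient of $Q$ is only constrained to have slope exceeding $\mu(E)$, so the input from \cite{CoskunHuizengaWoolf} is essential here.

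It remains to show $\Xi$ gives curves. As $M(\xi')$ is a point, $F = \OO_{\PP^2}^{\oplus 2}$ is fixed, and I must show that for a general $Q \in M(\xi'')$ the map $\varphi_Q \colon \P\Ext^1(Q,F) = \P^5 \dashrightarrow M(\xi)$ is nonconstant. By completeness the general point of the image is a general $E \in M(\xi)$, with $h^0(E) = 2$; for such $E$ the subsheaf $\OO_{\PP^2}^{\oplus 2} = H^0(E)\te\OO_{\PP^2} \hookrightarrow E$ is canonical and $Q$ is recovered as its cokernel, so every realization of $E$ as an extension of $Q$ by $F$ differs only by the action of $\Aut(F)\times\Aut(Q) = \GL_2 \times \C^*$. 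On $\P\Ext^1(Q,F) \cong \P(\C^3 \te \C^2)$ this action factors through $\PGL_2$ acting via the $\C^2$-factor, and a general class, regarded as an injective linear map $\C^2 \to \C^3$, has trivial $\PGL_2$-stabilizer; hence the fiber of $\varphi_Q$ over a general $[E]$ is a single $3$-dimensional orbit. Therefore the image of $\varphi_Q$ has dimension $5 - 3 = 2 > 0$, so $\varphi_Q$ is nonconstant and $\Xi$ gives curves. Once the generic value of $h^0$ is pinned down via \cite{CoskunHuizengaWoolf}, the non-simplicity of $F$ is handled cleanly by this $\PGL_2$-orbit count.
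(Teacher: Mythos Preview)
Your argument is correct and follows the same route as the paper: both rely on \cite{CoskunHuizengaWoolf} for the key cohomological input (that the general $E\in M(\xi)$ has $h^0(E)=2$ and that the resulting cokernel is semistable), which is exactly what the paper means when it says the result ``easily follows'' from that reference. Your explicit $\PGL_2$-orbit computation on $\P(\C^3\otimes\C^2)$ is a clean way to handle the non-simplicity of $F=\OO_{\P^2}^{\oplus 2}$ and makes the ``gives curves'' conclusion more transparent than a bare citation, but it is not a genuinely different strategy.
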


While we had to modify the original extremal triple $\Xi$ in order to make it give curves, note that the wall $W(\Xi)$ is unchanged by this modification.  When showing that $W(\Xi)$ is the largest wall in \S\ref{sec-ample}, we will not need to handle these cases separately.

In the case of the Chern character $\xi = (6,\frac{1}{3},\frac{13}{18})$, Theorem \ref{thm-smallRankCurves} shows that the corresponding extremal triple $\Xi$ gives curves.  However, the wall $W(\Xi)$ is actually empty.  In this case $\chi(\OO_{\P^2},E) = 5$ for $E\in M(\xi)$ and the Chern character $\xi' = (5,0,0)$ will correspond to the primary edges of both the effective and ample cones.  Again, curves dual to this edge of the effective cone are given by \cite{CoskunHuizengaWoolf}.

\begin{proposition}\label{prop-sporadic2}
The admissible decomposition $\Xi = ((5,0,0),(6,\frac{1}{3},\frac{13}{18}),(1,2,6))$ is complete and gives curves.
\end{proposition}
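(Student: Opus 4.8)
The plan is to realize the general sheaf $E\in M(\xi)$ directly through its global sections and then to extract the family of curves from the resulting extension structure. First I would record the identifications: since $\xi'=(5,0,0)$ is semiexceptional, $M(\xi')$ is the single point $\OO_{\P^2}^{\oplus 5}$, while $\xi''=(1,2,6)$ is the character of a twisted ideal sheaf $I_Z(2)$ with $Z$ of length $6$, so $M(\xi'')\cong\Hilb^6(\P^2)$. The decomposition $\Xi$ is admissible and torsion-free, with $\mu(\xi'')-\mu(\xi')=2<3$, so Lemma \ref{existenceOfExtensions} gives $\Hom(I_Z(2),\OO_{\P^2}^{\oplus5})=0$ and $\ext^1(I_Z(2),\OO_{\P^2}^{\oplus5})=-\chi(\xi'',\xi')=30$. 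A Riemann--Roch computation gives $\chi(\OO_{\P^2},E)=5$, and by stability $h^2(E)=\hom(E,\OO_{\P^2}(-3))^\vee=0$ for every $E\in M(\xi)$; I would invoke the description of the cohomology of a general sheaf from \cite{CoskunHuizengaWoolf} to conclude that the general $E$ also has $h^1(E)=0$, so that $h^0(E)=5$.

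For completeness I would study the evaluation map $\ev\colon H^0(E)\otimes \OO_{\P^2}\to E$ for the general $E$. Since $\rk E=6>5=h^0(E)$, the five sections are generically independent, so $\ev$ is injective; the essential point is that its cokernel $Q$ is torsion-free, whence $Q$ is a rank-one sheaf of character $\xi''$, i.e. $Q\cong I_Z(2)$, and $E$ becomes an extension $0\to\OO_{\P^2}^{\oplus5}\to E\to I_Z(2)\to 0$. This torsion-freeness is exactly the statement that $E$ avoids the Brill--Noether locus where $\ev$ degenerates, and that locus is the primary extremal effective divisor $D$ computed in \cite{CoskunHuizengaWoolf}; since $D$ is a proper subvariety, the general $E$ works, $\Xi$ is complete, and in particular the general extension is stable.

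To see that $\Xi$ gives curves I would exploit the vanishing $H^0(I_Z(2))=0$ for a general length-$6$ scheme $Z$ (six general points lie on no conic), which yields $\Hom(\OO_{\P^2}^{\oplus5},I_Z(2))=0$. Applying $\Hom(\OO_{\P^2},-)$ to any extension then shows $\Hom(\OO_{\P^2},E)\cong\C^5$ and that the image of $\ev$ is exactly the subsheaf $\OO_{\P^2}^{\oplus5}\subset E$; hence the isomorphism class of $E$ canonically recovers both $F=\OO_{\P^2}^{\oplus5}$ and $Q=I_Z(2)$, and so the extension class up to $\Aut(F)\times\Aut(Q)=\GL_5\times\C^\ast$. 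On the open locus of classes whose associated map $\C^5\to\Ext^1(I_Z(2),\OO_{\P^2})\cong\C^6$ is injective, the orbits are parameterized by $\Gr(5,6)\cong\P^5$, which therefore injects into $M(\xi)$ as a positive-dimensional family of non-isomorphic sheaves; thus the moduli map $\P\Ext^1(I_Z(2),\OO_{\P^2}^{\oplus5})\dashrightarrow M(\xi)$ is nonconstant, and these are the curves dual to $D$ produced in \cite{CoskunHuizengaWoolf}. As a consistency check, the construction gives an injective map from the $17$-dimensional parameter space of pairs $(Z,\ \text{point of }\Gr(5,6))$ into $M(\xi)$, and $\dim M(\xi)=r^2(2\Delta-1)+1=17$, reconfirming both dominance and completeness.

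The main obstacle is the non-simplicity of $F=\OO_{\P^2}^{\oplus5}$: because $\Aut(F)=\GL_5$ is large, Lemma \ref{lem-nonIsoExtensions} and Proposition \ref{prop-curveCriterion} do not apply directly, and one must track the $\GL_5$-action carefully to be certain the extension family is genuinely positive-dimensional in moduli rather than a single orbit. The vanishing $H^0(I_Z(2))=0$ is precisely what rescues the argument, since it lets one reconstruct $F$ (hence the extension orbit) from $E$. The second delicate point, torsion-freeness of $\coker(\ev)$ for the general $E$, is the content of the effective-cone computation in \cite{CoskunHuizengaWoolf}, which I would cite rather than reprove.
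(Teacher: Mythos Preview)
Your argument is correct and matches the paper's approach, which simply invokes \cite{CoskunHuizengaWoolf} for the Brill--Noether/effective-cone analysis giving both completeness and the dual curves; you have unpacked that citation in detail, including the $\Gr(5,6)$ description of the extension orbits and the matching dimension count $12+5=17=\dim M(\xi)$. One cosmetic point: the inequality $\rk E>h^0(E)$ alone does not force $\ev$ to be injective, but injectivity is part of the same Brill--Noether input you already cite (the locus where $\ev$ fails to be a rank-$5$ bundle map is contained in the extremal divisor $D$ of \cite{CoskunHuizengaWoolf}), so nothing further is needed.
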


\subsection{Torsion triples}\label{ssec-torsion} 

Let $\Xi=(\xi',\xi,\xi'')$ be an extremal torsion triple, and let $r=\rk(\xi) = \rk(\xi')$.  We have $\mu(\xi)-\mu(\xi') = \frac{1}{r}$ by the slope-closeness condition \ref{cond-slopeClose}.  Write $$\mu(\xi') = \frac{a}{b} \qquad \textrm{and}\qquad \mu(\xi) = \frac{c}{d}$$ in lowest terms.  The numbers $\mu(\xi')$ and $\mu(\xi)$ are consecutive terms in the \emph{Farey sequence} of order $r$, so $\mu(\xi) - \mu(\xi') = \frac{1}{bd}$ and we deduce that $bd=r$.  Also, the \emph{mediant} $$\frac{a+c}{b+d}$$ must have denominator $b+d$ larger than $r$.  The two conditions $bd = r$ and $b+d>r$ together imply that either $b=1$ or $d=1$.  That is, either $\mu(\xi')$ or $\mu(\xi)$ is an integer.

If $\mu(\xi')$ is an integer, then by discriminant minimality \ref{cond-discMinimal}  and rank minimality \ref{cond-rankMinimal} we have $\xi' = (1,\mu(\xi'),0)$.  Thus $r=1$, and in every case $\mu(\xi)$ is also an integer.

We may assume $\mu(\xi) = 1$.  Consider the triple $\Xi$ with $$\xi' = (r,1-\tfrac{1}{r},P(-\tfrac{1}{r})) \qquad \textrm{and} \qquad \xi = (r,1,1).$$  The character $\xi''$ is then $\ch \OO_L$, where $L\subset \P^2$ is a line.  We have $\chi(\xi,\xi'')=0$, and $\Xi$ is complete (hence generically $\mu$-stable) by a similar argument to Proposition \ref{prop-completeTriples}. If $r\geq 2$, then  $$\dim M(\xi')+\dim M(\xi'')=(r^2-3r+2)+2< r^2+1 = \dim M(\xi),$$ so $\Xi$ must give curves. When $r=1$, for any $p\in L$ there is a sequence $$0\to \OO_{\P^2}\to I_p(1) \to \OO_L\to 0,$$ so $\Xi$ gives curves when $r=1$ as well.  Applying elementary modifications, we conclude our discussion with the following.

\begin{proposition}
Let $\Xi = (\xi',\xi,\xi'')$ be a torsion extremal triple.  If $\xi$ is not the Chern character of a line bundle, then $\Xi$ is generically $\mu$-stable and gives curves.
\end{proposition}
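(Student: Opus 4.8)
The plan is to reduce the statement to triples already analyzed in this subsection and then propagate upward by elementary modifications, applying Proposition~\ref{prop-minimalReduction} for stability and Lemma~\ref{lem-curveBump} for curves. By the Farey-sequence discussion above, after twisting by a suitable $\OO_{\P^2}(k)$ I may assume $\mu(\xi)=1$, so $\xi=(r,1,\Delta)$ with the extremal subcharacter pinned to $\xi'=(r,1-\tfrac{1}{r},P(-\tfrac{1}{r}))$ and $\xi''$ a twist of $\ch\OO_L$. Since $\xi'$ depends only on $r$ and $\mu(\xi)$, the admissible triples with these invariants are totally ordered by $\preceq$; write $\Xi_0=(\xi',(r,1,1),\ch\OO_L)$ for the one with $\Delta(\xi)=1$. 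The hypothesis that $\xi$ is not a line bundle is then equivalent to $\Delta(\xi)\geq 1=\Delta((r,1,1))$, that is, to $\Xi_0\preceq\Xi$.

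First I would settle generic $\mu$-stability, for which Proposition~\ref{prop-minimalReduction} requires only that the \emph{minimal} admissible triple be generically $\mu$-stable. For $r\geq 2$ the minimal triple is $\Xi_0$ itself; it is complete, hence generically $\mu$-stable, by the argument recorded above modeled on the last case of Proposition~\ref{prop-completeTriples}: because $\chi(\xi,\xi'')=0$, for a $\mu$-stable $E$ the locus $\{Q\in M(\xi''):\hom(E,Q)\neq 0\}$ is either all of $M(\xi'')$ or a nonempty effective divisor, so $E$ admits a nonzero, hence surjective, map to some $Q$ and is thereby realized as an extension. For $r=1$ the minimal triple is instead the line bundle $0\to\OO\to\OO(1)\to\OO_L(1)\to 0$, which is trivially complete and generically $\mu$-stable since $\OO(1)$ is stable. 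In either case the minimal triple is generically $\mu$-stable, so Proposition~\ref{prop-minimalReduction} lifts this to every $\Theta$ lying above it, and in particular to $\Xi$.

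Next I would produce curves, taking $\Xi_0=(\xi',(r,1,1),\ch\OO_L)$ as the base in every rank. For $r\geq 2$, completeness of $\Xi_0$ together with the strict inequality $\dim M(\xi')+\dim M(\xi'')=(r^2-3r+2)+2<r^2+1=\dim M(\xi)$ forces the extension family to dominate $M(\xi)$ with positive-dimensional fibers, so the map $\P\Ext^1(\OO_L,F)\dashrightarrow M(\xi)$ cannot be constant for general $F$; for $r=1$ the pencil $0\to\OO\to I_p(1)\to\OO_L\to 0$ obtained by moving $p$ along $L$ already sweeps out a curve. Since $\Xi_0$ is generically $\mu$-stable and $\Xi_0\preceq\Xi$ whenever $\xi$ is not a line bundle, Lemma~\ref{lem-curveBump} then shows $\Xi$ gives curves.

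The conceptual subtlety, and the reason the hypothesis excludes line bundles, lives in rank one: there the absolutely minimal torsion triple is the line bundle $\OO(1)$, whose moduli space is a single point and which therefore cannot give curves. This forces the two halves of the argument to rest on different bases---the minimal line-bundle triple for the stability reduction through Proposition~\ref{prop-minimalReduction}, but the one-step-larger triple $(r,1,1)$ for the curve-bumping through Lemma~\ref{lem-curveBump}---and one must check that every non-line-bundle torsion extremal triple does lie above $(r,1,1)$. The only input requiring honest geometry rather than formal propagation is the completeness of $(r,1,1)$ for $r\geq 2$, which hinges on $\chi(\xi,\xi'')=0$ and the effective-divisor argument; I expect that to be the main obstacle.
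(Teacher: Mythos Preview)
Your proposal is correct and follows essentially the same route as the paper: reduce to $\mu(\xi)=1$ via the Farey-sequence analysis, establish completeness (hence generic $\mu$-stability) of the base triple $\Xi_0$ with $\xi=(r,1,1)$ using the $\chi(\xi,\xi'')=0$ effective-divisor argument for $r\geq 2$ and the explicit $I_p(1)$ pencil for $r=1$, deduce that $\Xi_0$ gives curves from the dimension inequality, and then propagate both conclusions upward by elementary modifications. Your explicit distinction between the minimal triple used for Proposition~\ref{prop-minimalReduction} and the base triple used for Lemma~\ref{lem-curveBump} in rank one is a helpful clarification that the paper leaves implicit.
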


\section{The ample cone}\label{sec-ample}

\subsection{Notation} We begin by fixing notation for the rest of the paper.  Let $\xi$ be a stable Chern character of positive height.  We assume one of the following three hypotheses hold:
 \begin{enumerate}[label=(H\arabic*)]
\item \label{hyp-asymptotic} $\rk(\xi)$ and $c_1(\xi)$ are coprime and $\Delta(\xi)$ is sufficiently large, 
\item \label{hyp-smallRank} $\rk(\xi) \leq 6$ and $\xi$ is not a twist of $(6,\frac{1}{3},\frac{13}{18})$, or
\item \label{hyp-exceptionalCase} $\xi = (6,\frac{1}{3},\frac{13}{18})$. 
\end{enumerate}
Suppose we are in case \ref{hyp-asymptotic} or \ref{hyp-smallRank}.  There is an extremal triple $\Xi = (\xi',\xi,\xi'')$.  Either $\Xi$ gives curves or we are in one of the cases of Proposition \ref{prop-sporadic}, in which case there is a decomposition of $\xi$ which gives curves and has the same corresponding wall.  As discussed in \S\ref{ssec-BayerMacriPlan}, to show the primary edge of the ample cone corresponds to $W(\Xi)$ it will be enough to show that $W(\Xi)$ is the Gieseker wall $W_{\max}$.  Note that $W_{\max}$ cannot be strictly nested inside $W(\Xi)$, since then by our work so far there are sheaves $E\in M(\xi)$ destabilized along $W(\Xi)$.  We must show $W_{\max}$ is not larger than $W(\Xi)$.

Let $E\in M(\xi)$ be a sheaf which is destabilized along some wall.  For any $(s,t)$ on the wall we have an exact sequence $$0\to F\to E\to Q\to 0$$ of $\sigma_{s,t}$-semistable objects of the same slope, where the sequence is exact in any of the corresponding categories $\cA_s$ along the wall.  Above the wall, $\mu_{s,t}(F)<\mu_{s,t}(E)$, and below the wall the inequality is reversed.  Let $\Theta = (\theta',\theta,\theta'') = (\ch F,\ch E,\ch Q)$ be the corresponding decomposition of $\xi = \theta$, so that the wall is $W(\Theta)$.
Our job is to show that $W(\Theta)$ is no larger than $W(\Xi)$ by imposing numerical restrictions on $\theta'$.  We begin by imposing some easy restrictions on $\theta'$.

\begin{lemma}\label{lem-boundsTrivial}
The object $F$ is a nonzero torsion-free sheaf, so $\rk(F)\geq 1$.  We have $\mu(F) < \mu(E)$, and every Harder-Narasimhan factor of $F$ has slope at most $\mu(E)$.
\end{lemma}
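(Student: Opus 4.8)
The plan is to pass from the Bridgeland-categorical exact sequence to an honest sequence of sheaves by taking cohomology. Since $E$ is a genuine sheaf in degree $0$, we have $\rH^{-1}(E)=0$ and $\rH^0(E)=E$, so the long exact sequence of cohomology sheaves associated to $0\to F\to E\to Q\to 0$ in $\cA_s$ collapses to the statement $\rH^{-1}(F)=0$ together with a four-term exact sequence of coherent sheaves
$$0\to \rH^{-1}(Q)\to F\to E\to \rH^0(Q)\to 0.$$
In particular $F$ is concentrated in degree $0$, so $F\in\cQ_s$ is a sheaf. I will use throughout that $\rH^{-1}(Q)\in\cF_s$ is torsion-free with $\mu_{\max}(\rH^{-1}(Q))\leq s$, and that $s<\mu(E)$: indeed $E\in\cQ_s$ forces $\mu_{\min}(E)>s$, and Gieseker semistability of $E$ gives $\mu$-semistability, so $\mu_{\min}(E)=\mu(E)$.

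Next I would verify that $F$ is a nonzero torsion-free sheaf. The torsion subsheaf of $F$ maps to $0$ in the torsion-free sheaf $E$, hence lies in $\ker(F\to E)=\rH^{-1}(Q)$, which is torsion-free; so $F$ itself is torsion-free. As $F$ is a nonzero subobject destabilizing $E$, it is a nonzero torsion-free sheaf and therefore $\rk(F)\geq 1$.

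For the slope estimates I would control the maximal destabilizing subsheaf $F_1\subseteq F$, which is semistable of slope $\mu_{\max}(F)$, by composing its inclusion with the map $\phi\colon F\to E$. If $\phi|_{F_1}=0$ then $F_1\subseteq \rH^{-1}(Q)$, giving $\mu(F_1)\leq s<\mu(E)$. Otherwise the image $\bar F_1=\im(\phi|_{F_1})\subseteq E$ satisfies $\mu(\bar F_1)\leq \mu(E)$ by $\mu$-semistability of $E$, while $\ker(\phi|_{F_1})\subseteq \rH^{-1}(Q)$ has all slopes $\leq s$; the additivity $c_1(F_1)=c_1(\ker)+c_1(\bar F_1)\leq \rk(\ker)\,s+\rk(\bar F_1)\,\mu(E)\leq \rk(F_1)\,\mu(E)$ then yields $\mu_{\max}(F)=\mu(F_1)\leq \mu(E)$. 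Every Harder-Narasimhan factor of $F$ has slope $\leq \mu_{\max}(F)\leq\mu(E)$, and in particular $\mu(F)\leq\mu(E)$.

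The one point requiring care is upgrading this to the strict inequality $\mu(F)<\mu(E)$, which I regard as the main (though mild) obstacle. Here I would use the shape of the wall: $W(\Theta)$ is one of the semicircular walls to the left of the vertical line $s=\mu$, and by the formula for $W(\xi,\zeta)$ a potential wall is vertical precisely when the two slopes agree. Equivalently, the sign of $\mu_{s,t}(F)-\mu_{s,t}(E)$ genuinely flips across $W(\Theta)$, which a short computation of $Z_{s,t}$ shows is impossible when $\mu(F)=\mu(E)$. Hence $\mu(F)\neq\mu(E)$, and combined with $\mu(F)\leq\mu(E)$ from the previous step we conclude $\mu(F)<\mu(E)$. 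Everything apart from this strictness is bookkeeping once the four-term cohomology sequence is established; the only external input is that the destabilizing wall under consideration is semicircular, which is exactly the hypothesis that we work in the region $s<\mu$.
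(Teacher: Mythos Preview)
Your proof is correct and follows essentially the same route as the paper's: take cohomology of the destabilizing triangle to get the four-term sheaf sequence $0\to \rH^{-1}(Q)\to F\to E\to \rH^0(Q)\to 0$, read off torsion-freeness of $F$ from that of $\rH^{-1}(Q)$ and $E$, bound the Harder--Narasimhan slopes of $F$ via the map to $E$ and the containment $\ker(F\to E)\in\cF_s$, and rule out $\mu(F)=\mu(E)$ because the wall is semicircular rather than vertical. The only minor tactical difference is in the HN step: the paper argues by contradiction (if $\mu(F_1)>\mu(E)$ then $F_1\to E$ vanishes by semistability, forcing $F_1\subset \rH^{-1}(Q)\in\cF_s$, absurd), whereas you give a direct additivity estimate on $c_1(F_1)$; both are fine.
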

\begin{proof}
Fix a category $\cA_s$ along $W(\Theta)$.  Taking cohomology sheaves of the destabilizing sequence of $E$, we get a long exact sequence $$0\to {\rm H}^{-1}(F)\to 0 \to {\rm H}^{-1}(Q) \to {\rm H}^0(F)\to {\rm H}^0(E)\to {\rm H}^0(Q)\to 0$$ since $E\in \cQ_s$.  Thus $F$ is a sheaf in $\cQ_s$.  We write $K = {\rm H}^{-1}(Q)$ and $C = {\rm H}^{0}(Q)$, so $K\in \cF_s$,  $C\in \cQ_s$, and we have an exact sequence of sheaves $$0\to K\to F \to E \to C\to 0.$$
Since $E$ is torsion-free, the torsion subsheaf of $F$ is contained in $K$.  Since $K\in \cF_s$ is torsion-free,  we conclude $F$ is torsion-free.  Clearly also $F$ is nonzero, for otherwise $(\theta',\theta,\theta'')$ wouldn't span a $2$-plane in $K(\P^2)$.  We conclude $\rk(F)\geq 1$.

Let $$\{0\} \subset F_1\subset \cdots \subset F_\ell = F$$ be the Harder-Narasimhan filtration of $F$.  If $\mu(F_1) > \mu(E)$ then $F_1\to E$ is zero and $F_1\subset K$.  Since $K\in \cF_s$ for any $s$ along $W(\Theta)$,  this is absurd.  Therefore $\mu(F_1)\leq \mu(E)$.  We can't have $\mu(F) = \mu(E)$ since then $W(\Theta)$ would be the vertical wall, so we conclude $\mu(F)<\mu(E)$.
\end{proof}

\subsection{Excluding higher rank walls} In this subsection, we bound the rank of $F$ under the assumption that $W(\Theta)$ is larger than $W(\Xi)$.  In general, there will be walls corresponding to ``higher rank'' subobjects.  We show that the Gieseker wall cannot correspond to such a subobject.

\begin{theorem}\label{thm-excludeHighRank}
Keep the notation and hypotheses from above.  
\begin{enumerate} \item Suppose hypothesis \ref{hyp-asymptotic} or \ref{hyp-smallRank} holds.   If $W(\Theta)$ is larger than $W(\Xi)$, then $1\leq \rk(\theta') \leq
 \rk(\xi)$.
 \item If $\xi = (6,\frac{1}{3},\frac{13}{18})$, then the same result holds for the decomposition $\Xi = (\xi',\xi,\xi'')$ with $\xi' = (5,0,0)$.
 \end{enumerate}
\end{theorem}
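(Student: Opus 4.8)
The plan is to bound $\rk(\theta')$ from above by a careful analysis of the geometry of the wall $W(\Theta)$ in the $(s,t)$-plane, using the hypothesis that it is larger than $W(\Xi)$. Recall that a semicircular wall $W(\Theta) = W(\theta',\theta)$ has center $(s_\Theta,0)$ and radius $\rho_\Theta$ determined by the formulas in \S\ref{sec-prelim}, and that ``larger'' means the center lies further to the left, equivalently $s_\Theta < s_\Xi$. The key geometric fact I would exploit is that along any semicircular wall the Bridgeland central charge $Z_{s,t}$ pairs the characters $\theta'$ and $\theta$ into real multiples of one another, so that $\theta'$ must lie in a constrained region of the $(\mu,\Delta)$-plane. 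Combined with the Bogomolov inequality $\Delta(\theta')\geq 0$ (and more precisely $\Delta(\theta')\geq \delta(\mu(\theta'))$ since $F$ is a semistable sheaf by Lemma \ref{lem-boundsTrivial}), this will pin down how large $\rk(\theta')$ can be.

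First I would use Lemma \ref{lem-boundsTrivial} to record that $F$ is a nonzero torsion-free sheaf with $\mu(\theta')<\mu(\xi)$ and all Harder-Narasimhan slopes at most $\mu(\xi)$; this already gives $\rk(\theta')\geq 1$ and controls $\mu(\theta')$. The substantive direction is the upper bound $\rk(\theta')\leq \rk(\xi)$. Here I would argue by contradiction: assume $\rk(\theta') > \rk(\xi)$ and show that the wall $W(\Theta)$ cannot be larger than $W(\Xi)$. The mechanism is that a destabilizing subobject $F$ of large rank but bounded discriminant (bounded because $F$ is semistable, so $\Delta(\theta')$ is at least the Dr\'ezet--Le Potier bound $\delta(\mu(\theta'))$) forces the radius $\rho_\Theta^2 = (s_\Theta-\mu(\theta'))^2 - 2\Delta(\theta')$ to be controlled, and the requirement that $F\subset E$ actually destabilizes along the semicircle constrains $s_\Theta$ to lie within a bounded distance of $\mu(\theta')$. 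Translating these constraints into an inequality comparing $s_\Theta$ with $s_\Xi$ should yield the contradiction. The defining extremality properties of $\Xi$---slope-closeness \ref{cond-slopeClose} and discriminant-minimality \ref{cond-discMinimal}---are exactly what make $s_\Xi$ small enough that no higher-rank $\theta'$ can beat it.

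The case $\xi=(6,\tfrac13,\tfrac{13}{18})$ in part (2) should follow the identical argument, since the only role $\Xi$ plays is through the numerical invariants of $\xi'$ and the resulting wall $W(\Xi)$; replacing $\xi'$ by $(5,0,0)$ (which is the correct primary subobject in this sporadic case, as discussed in \S\ref{ssec-sporadic}) changes only the specific values of $s_\Xi$ and $\rho_\Xi$ used in the comparison, not the structure of the proof. I would therefore carry out the numerical estimate once, in a way that inputs the invariants of $\xi'$ abstractly, and then specialize.

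The main obstacle I anticipate is making the comparison between $s_\Theta$ and $s_\Xi$ genuinely \emph{uniform} in $\rk(\theta')$. Because $\mu(\theta')$ can range over all slopes below $\mu(\xi)$, and because $\Delta(\theta')$ interacts with $\mu(\theta')$ through the fractal bound $\delta$, one must ensure the estimate does not degenerate as the slope $\mu(\theta')$ is pushed far from $\mu(\xi)$ or as $\rk(\theta')$ grows. The delicate point is that a high-rank $F$ could in principle have very small discriminant only if its slope is close to an exceptional slope, so the argument must handle the interplay between $\rk(\theta')$, the proximity of $\mu(\theta')$ to exceptional slopes, and the location of the wall simultaneously. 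I expect the cleanest route is to fix the geometric inequality ``$W(\Theta)$ larger than $W(\Xi)$'' as a lower bound on $-\tfrac{\Delta(\theta')-\Delta(\xi)}{\mu(\theta')-\mu(\xi)}$ and then show that for $\rk(\theta')>\rk(\xi)$ this is incompatible with the semistability bound $\Delta(\theta')\geq\delta(\mu(\theta'))$, using the explicit form of $\delta$ near the relevant exceptional slope.
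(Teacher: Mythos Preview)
Your proposal has a genuine gap: the central mechanism you describe cannot produce the needed bound, and the key structural idea of the paper's proof is absent.

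First, your appeal to Lemma \ref{lem-boundsTrivial} to conclude that $F$ is semistable is incorrect. That lemma only says $F$ is a torsion-free sheaf whose Harder--Narasimhan factors have slope at most $\mu(\xi)$; it does \emph{not} assert Gieseker semistability. So you are not entitled to invoke $\Delta(\theta')\geq \delta(\mu(\theta'))$. More importantly, even if such a lower bound on $\Delta(\theta')$ were available, it would depend only on $\mu(\theta')$ and not on $\rk(\theta')$. Your proposed comparison of $s_\Theta$ with $s_\Xi$ via the wall formula therefore cannot distinguish a high-rank subobject from a low-rank one with the same slope and discriminant, so it cannot yield a conclusion of the form $\rk(\theta')\leq \rk(\xi)$.

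The paper's argument is quite different and rests on an observation you do not mention: when $\rk(F)>\rk(E)$, the quotient $Q$ in the destabilizing sequence is no longer a sheaf, and the cohomology sheaf $K = {\rm H}^{-1}(Q)$ is nonzero and lies in $\cF_s$ for every $s$ along $W(\Theta)$. The constraint $\mu(K)\leq s_\Theta-\rho_\Theta$ from the \emph{left} endpoint of the wall, combined with $F\in \cQ_s$ giving a constraint from the \emph{right} endpoint, yields an inequality that genuinely involves the ranks $k=\rk(K)$ and $f=\rk(F)$. Optimizing over these integers (subject to $f\geq \rk(\xi)+1$ and $f-\rk(\xi)\leq k\leq f$) produces the uniform radius bound
\[
\rho_\Theta^2 \leq \frac{\rk(\xi)^2}{2(\rk(\xi)+1)}\,\Delta(\xi)
\]
(Proposition \ref{prop-highRank}). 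This bound is linear in $\Delta(\xi)$, whereas $\rho_\Xi^2$ grows quadratically in $\Delta(\xi)$, giving the conclusion under \ref{hyp-asymptotic}. For \ref{hyp-smallRank} and the sporadic case \ref{hyp-exceptionalCase} the same inequality is checked numerically against $\rho_\Xi^2$ for each $(r,\mu)$. Note that extremality properties \ref{cond-slopeClose} and \ref{cond-discMinimal} are not used here; they enter only later in Theorems \ref{thm-main} and \ref{thm-mainsmall}.
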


The next inequality is our main tool for proving the theorem.

\begin{proposition}\label{prop-highRank}
If $\rk(F)>\rk(E)$, then the radius $\rho_\Theta$ of $W(\Theta)$ satisfies $$\rho_\Theta^2 \leq \frac{\rk(E)^2}{2(\rk(E)+1)}\Delta(E).$$
\end{proposition}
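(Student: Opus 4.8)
The plan is to control $\rho_\Theta$ by locating the Mumford slopes $\mu(F),\mu(E),\mu(Q)$ relative to the center of the semicircular wall $W(\Theta)$. Write $r=\rk(E)$ and $m=\rk(F)-r$, so the hypothesis is exactly $m\geq 1$, and let $(s_0,0)$ be the center of $W(\Theta)$, which I may assume has $\rho_\Theta>0$. Since the destabilizing sequence has equal Bridgeland slope along $W(\Theta)$, this one semicircle is simultaneously the potential wall of $\ch F$, of $\xi=\ch E$, and of $\ch Q$. Hence the radius formula of \S\ref{sec-prelim} gives three expressions
\begin{equation*}
\rho_\Theta^2=(s_0-\mu(E))^2-2\Delta(E)=(s_0-\mu(F))^2-2\Delta(F)=(s_0-\mu(Q))^2-2\Delta(Q).
\end{equation*}
I would introduce the distances $d_E=\mu(E)-s_0$, $d_F=\mu(F)-s_0$, and $e_Q=s_0-\mu(Q)$, rewriting these as $\rho_\Theta^2=d_E^2-2\Delta(E)=d_F^2-2\Delta(F)=e_Q^2-2\Delta(Q)$. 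The two inputs that drive the estimate are the Bogomolov inequality for $\sigma_{s,t}$-semistable objects, applied to $F$ and $Q$, and the positivity axiom of the stability condition.

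First I would extract the sign information. Because $F$ and $Q$ are $\sigma_{s,t}$-semistable they lie in the heart $\cA_{s_0}$, so positivity of $Z_{s_0,t}$ forces $\Im Z_{s_0,t}(F)\geq 0$ and $\Im Z_{s_0,t}(Q)\geq 0$. Since $\Im Z_{s_0,t}(\cdot)=t\,\ch_0(\cdot)(\mu(\cdot)-s_0)$ and $\rk(F)>0$ while $\rk(Q)=-m<0$, these read $\mu(F)\geq s_0$ and $\mu(Q)\leq s_0$; equality in either would give $\rho_\Theta^2\leq 0$, so in fact $d_F>0$ and $e_Q>0$, i.e.\ the center lies strictly between $\mu(Q)$ and $\mu(F)$. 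Then the Bogomolov inequality $\Delta(F)\geq 0$, $\Delta(Q)\geq 0$ yields $d_F\geq\rho_\Theta$ and $e_Q\geq\rho_\Theta$. Finally, additivity of $\ch_1$ gives $r(\mu(E)-\mu(F))=m(\mu(F)-\mu(Q))$; writing $\delta=\mu(E)-\mu(F)>0$ this says
\begin{equation*}
d_F+e_Q=\mu(F)-\mu(Q)=\tfrac{r}{m}\,\delta,\qquad d_E=d_F+\delta.
\end{equation*}

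Combining these, $\tfrac{r}{m}\delta=d_F+e_Q\geq 2\rho_\Theta$ gives $\delta\geq\tfrac{2m}{r}\rho_\Theta$, and therefore $d_E=d_F+\delta\geq\rho_\Theta+\tfrac{2m}{r}\rho_\Theta=\tfrac{r+2m}{r}\rho_\Theta$. Squaring and substituting $d_E^2=\rho_\Theta^2+2\Delta(E)$ yields
\begin{equation*}
2\Delta(E)\geq\rho_\Theta^2\cdot\frac{(r+2m)^2-r^2}{r^2}=\rho_\Theta^2\cdot\frac{4m(r+m)}{r^2},
\end{equation*}
so $\rho_\Theta^2\leq\frac{r^2}{2m(r+m)}\Delta(E)$. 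As $m\geq 1$ forces $m(r+m)\geq r+1$, this gives the desired bound $\rho_\Theta^2\leq\frac{r^2}{2(r+1)}\Delta(E)$, with equality possible only when $\rk(F)=\rk(E)+1$.

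The step I expect to require the most care is the sign analysis placing $s_0$ strictly between $\mu(Q)$ and $\mu(F)$: this is precisely what makes both Bogomolov inequalities push the radius \emph{down}, rather than leaving it unconstrained (the inequalities $\Delta(F),\Delta(Q)\geq 0$ alone do not bound $\rho_\Theta$). I would also be careful to justify applying the Bogomolov inequality to the negative-rank complex $Q$, where it is cleanest to phrase it via $\overline{\Delta}=\ch_1^2-2\ch_0\ch_2\geq 0$ for any $\sigma_{s,t}$-semistable object, and to invoke Lemma \ref{lem-boundsTrivial} to know that $F$ is a genuine torsion-free sheaf of positive rank so that $\mu(F)$ and $\Delta(F)$ behave as expected.
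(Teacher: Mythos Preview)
Your argument is correct and reaches the same bound, but by a genuinely different route than the paper's.  The paper takes cohomology of the destabilizing triangle to obtain an exact sequence of sheaves
\[
0\to K^{k}\to F^{f}\to E^{e}\to C^{c}\to 0,
\]
uses $\mu(K)\leq s_\Theta-\rho_\Theta$ (from $K\in\cF_s$) together with $\mu(F)\geq s_\Theta+\rho_\Theta$ (from $F\in\cQ_s$) and a case split on $C$, and arrives at the intermediate inequality $\rho_\Theta^2\leq\frac{(k-f)^2}{2kf}\Delta(E)$, which then has to be maximized over the two integer parameters $k$ and $f$.  You instead keep $Q$ as a single object of negative rank and feed in the Bogomolov inequality $\Delta(F),\Delta(Q)\geq 0$ for $\sigma_{s,t}$-semistable objects; this collapses the two-parameter optimization to a single parameter $m=\rk(F)-\rk(E)$, giving the cleaner bound $\rho_\Theta^2\leq\frac{r^2}{2m(r+m)}\Delta(E)$, which is visibly maximized at $m=1$.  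Your path is shorter and avoids the cohomology-sheaf bookkeeping.

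One comment on the step you flagged: the Bogomolov inequality for $\sigma_{s,t}$-semistable objects is correct but is not stated in the paper.  You can sidestep it entirely and stay within the paper's toolkit by strengthening your positivity step: rather than using $\Im Z_{s_0,t}\geq 0$ only at the center, note that $F,Q\in\cA_s$ for \emph{every} $s\in(s_0-\rho_\Theta,s_0+\rho_\Theta)$ along the wall, so $\Im Z_{s,t}(F)\geq 0$ and $\Im Z_{s,t}(Q)\geq 0$ for all such $s$.  Letting $s\to s_0+\rho_\Theta$ and $s\to s_0-\rho_\Theta$ gives $d_F\geq\rho_\Theta$ and $e_Q\geq\rho_\Theta$ directly, without invoking $\Delta(F),\Delta(Q)\geq 0$.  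This is exactly the mechanism the paper uses for $F$ (phrased as $F\in\cQ_s$) and for $K$ (phrased as $K\in\cF_s$); your version simply applies the same idea to $Q$ as a whole rather than to its cohomology sheaves separately.
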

\begin{proof}
Consider the exact sequence of sheaves $$0\to K^k\to F^f\to E^e\to C^c\to 0,$$ with the superscripts denoting the ranks of the sheaves.  Since $F$ is in the categories $\cQ_s$ along $W(\Theta)$, we have $$f(s_\Theta+\rho_\Theta)\leq f\mu(F) = c_1(F) = c_1(K)+c_1(E) - c_1(C)= k\mu(K)+e\mu(E)-c_1(C).$$

Next, since $K$ is nonzero and in $\cF_s$ along $W(\Theta)$, we have $\mu(K)\leq s_\Theta-\rho_\Theta$, and thus $$f(s_\Theta+\rho_\Theta)\leq k(s_\Theta-\rho_\Theta)+e\mu(E)-c_1(C).$$ Rearranging, $$(k+f)\rho_\Theta\leq (k-f)s_\Theta+e\mu(E) - c_1(C).$$ If $C$ is zero or torsion, then $k-f=-e$ and $c_1(C)\geq 0$, from which we get \begin{equation}\label{eqn1} (k+f)\rho_\Theta\leq (k-f)(s_\Theta-\mu(E))\end{equation} This inequality also holds if $C$ is not torsion.  In that case, we have $k-f=c-e$ and since $E$ is semistable $c_1(C) = c\mu(C) \geq c\mu(E)$, from which the inequality follows.

Both sides of Inequality (\ref{eqn1}) are positive, and squaring both sides gives $$(k+f)^2\rho_\Theta^2\leq (k-f)^2(\rho_{\Theta}^2+2\Delta(E)).$$ We conclude $$\rho_\Theta^2 \leq \frac{(k-f)^2}{2kf} \Delta(E).$$ This inequality is as weak as possible when the coefficient $(k-f)^2/(2kf)$ is maximized.  Viewing $e$ as fixed, $k$ and $f$ are integers satisfying $f\geq e+1$ and $f-e\leq k\leq f$.  It is easy to see the coefficient is maximized when $f = e+1$ and $k=1$, which corresponds to the  inequality we wanted to prove.
\end{proof}

\begin{proof}[Proof of Theorem \ref{thm-excludeHighRank}]
We recall $$\rho_\Xi^2 = \left(\frac{\mu(\xi')-\mu(\xi)}{2}-\frac{\Delta(\xi)-\Delta(\xi')}{\mu(\xi)-\mu(\xi')}\right)^2-2\Delta(\xi).$$ If we view $\rho_\Xi^2$ as a function of $\Delta(\xi)$, then it grows quadratically as $\Delta(\xi)$ increases.  Suppose $\Delta(\xi)$ is large enough so that $$\rho_\Xi^2 \geq \frac{\rk(\xi)^2}{2(\rk(\xi)+1)}\Delta(\xi).$$ Then if $\rk(\theta')>\rk(\xi)$, we have  $\rho^2_\Theta\leq \rho^2_\Xi$.  This proves the theorem if hypothesis \ref{hyp-asymptotic} holds.

Next suppose \ref{hyp-smallRank} holds, and write $\xi = (r,\mu,\Delta)$.  View $\Delta$ as variable, and consider the quadratic equation in $\Delta$  $$\rho_\Xi^2 = \frac{r^2}{2(r+1)}\Delta;$$ this equation depends only on $r$ and $\mu$.  Assuming this equation has roots, let $\Delta_1(r,\mu)$ be the larger of the two roots.  Then the theorem is true for $\xi$ if $\Delta\geq \Delta_1(r,\mu)$.   Let $\Delta_0(r,\mu)$ be the minimal discriminant of a rank $r$, slope $\mu$ sheaf satisfying \ref{hyp-smallRank}.  We record the values of $\Delta_0(r,\mu)$ and $\Delta_1(r,\mu)$ for all pairs $(r,\mu)$ with $1\leq r\leq 6$ and $0 < \mu \leq 1$ in Table \ref{table-discValues}.  For later use, we also record the value of the right endpoint $(x^+(r,\mu),0)$ of the wall $W(\Xi)$ corresponding to the character $(r,\mu,\Delta_0)$.

\begin{center}
\renewcommand*{\arraystretch}{1.3}
\begin{longtable}{ccccccccccccccccc}
\caption[]{Computation of $\Delta_0(r,\mu)$, $\Delta_1(r,\mu)$, and $x^+(r,\mu)$.}\label{table-discValues}\\
\toprule 
$r$ & $\mu$ & $\Delta_0$ & $\Delta_1 $ &$x^+$ &$\qquad$& $r$ & $\mu$ & $\Delta_0$ & $\Delta_1$ &$x^+$&$\qquad$&$r$ & $\mu$ & $\Delta_0$ & $\Delta_1$ &$x^+$\\\midrule
\endfirsthead
\multicolumn{17}{l}{{\small \it continued from previous page}}\\
\toprule
$r$ & $\mu$ & $\Delta_0$ & $\Delta_1 $ &$x^+$ &$\qquad$& $r$ & $\mu$ & $\Delta_0$ & $\Delta_1$ &$x^+$&$\qquad$&$r$ & $\mu$ & $\Delta_0$ & $\Delta_1$ &$x^+$\\\midrule \endhead
\bottomrule \multicolumn{17}{r}{{\small \it continued on next page}} \\ \endfoot
\bottomrule
\endlastfoot
1 & 1 & 2 & 1.00 & 0                           && 
4 & $\frac{1}{2}$ & $\frac{7}{8}$ & 0.81 & 0    &&
5 & 1 & $\frac{6}{5}$ & 1.13 &0.46                    \\
2 & $\frac{1}{2}$ & $\frac{7}{8}$ & 0.25 & 0   && 
4 & $\frac{3}{4}$ & $\frac{29}{32}$ & 0.67 & 0.53 &&           
6 & $\frac{1}{6}$ & $\frac{67}{72}$ & 0.03 & 0     \\
2 & 1 & $\frac{3}{2}$ & 1.11 & 0.30               && 
4 & 1 & $\frac{5}{4}$ & 1.13 & 0.44               &&
6 & $\frac{1}{3}$ & $\frac{8}{9}$ & 0.79 & 0     \\
3 & $\frac{1}{3}$ & $\frac{8}{9}$ & 0.11 & 0  &&
5 & $\frac{1}{5}$ & $\frac{23}{25}$ & 0.04 &0 &&
6 & $\frac{1}{2}$ & $\frac{17}{24}$ & 0.64 &0.17     \\
3 & $\frac{2}{3}$ & $\frac{8}{9}$ & 0.57 & 0.37  && 
5 & $\frac{2}{5}$ & $\frac{17}{25}$ & 0.65 &0 &&
6 & $\frac{2}{3}$ & $\frac{13}{18}$ & 0.58 & 0.46    \\
3 & 1 & $\frac{4}{3}$ & 1.13  &0.39              && 
5 & $\frac{3}{5}$ & $\frac{17}{25}$ & 0.48 &0.37 &&
6 & $\frac{5}{6}$ & $\frac{67}{72}$ & 0.78 &0.68     \\
4 & $\frac{1}{4}$ & $\frac{29}{32}$ & 0.06 & 0 && 
5 & $\frac{4}{5}$ & $\frac{23}{25}$ & 0.73 &0.62 &&
6 & 1 & $\frac{7}{6}$ & 1.13 & 0.48                \\
\end{longtable}
\end{center}
In every case, we find that $\Delta_0(r,\mu) \geq \Delta_1(r,\mu)$ as required.  We note that $\Delta_1(6,\frac{1}{3})> \frac{13}{18}$, so the proof does not apply to $\xi = (6,\frac{1}{3},\frac{13}{18})$.

When $\xi = (6,\frac{1}{3},\frac{13}{18})$, we put $\Xi = ((5,0,0),(6,\frac{1}{3},\frac{13}{18}),(1,2,6))$ and compute $\rho_\Xi^2=4$.  If $\rk(\theta')> 6$ then Proposition \ref{prop-highRank} gives $\rho_{\Theta}^2\leq \frac{13}{7}$, so $W(\Xi)$ is not nested in $W(\Theta)$ in this case either.
\end{proof}

The proof of the theorem also gives the following nonemptiness result.

\begin{corollary}
If hypothesis \ref{hyp-asymptotic} or \ref{hyp-smallRank} holds, then $W(\Xi)$ is nonempty.  If $\xi = (6,\frac{1}{3},\frac{13}{18})$, the wall corresponding to $\xi'=(5,0,0)$ is nonempty.
\end{corollary}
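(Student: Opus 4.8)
The plan is to read off the result directly from the inequalities already assembled in the proof of Theorem \ref{thm-excludeHighRank}; no new geometric input should be needed. The first thing I would record is the elementary criterion that a potential semicircular wall $W(\Xi)$ is nonempty exactly when its squared radius $\rho_\Xi^2$ is strictly positive. Indeed, by the description of potential walls in Section \ref{sec-prelim}, $W(\Xi)$ is the circle of radius $\rho_\Xi$ centered at a point $(s_\Xi,0)$ on the $s$-axis; this is a genuine nonempty arc in the half-plane $t>0$ precisely when $\rho_\Xi^2>0$, and degenerates to the empty set otherwise. Thus the entire corollary reduces to establishing the positivity of $\rho_\Xi^2$.

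Next I would extract exactly this positivity from the earlier argument. Under hypothesis \ref{hyp-asymptotic} or \ref{hyp-smallRank}, the proof of Theorem \ref{thm-excludeHighRank} establishes the lower bound
$$\rho_\Xi^2 \geq \frac{\rk(\xi)^2}{2(\rk(\xi)+1)}\Delta(\xi),$$
either by the choice of $\Delta(\xi)$ sufficiently large in case \ref{hyp-asymptotic}, or by the case-by-case verification that $\Delta_0(r,\mu)\geq \Delta_1(r,\mu)$ recorded in Table \ref{table-discValues} in case \ref{hyp-smallRank}. Since $\xi$ has positive height, we have $\Delta(\xi)>\delta(\mu(\xi))\geq \tfrac{1}{2}>0$ from the Dr\'ezet--Le Potier description of the curve $\delta$, and the coefficient $\rk(\xi)^2/(2(\rk(\xi)+1))$ is manifestly positive. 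Hence $\rho_\Xi^2>0$, and $W(\Xi)$ is nonempty. For the sporadic character $\xi=(6,\frac{1}{3},\frac{13}{18})$ with $\xi'=(5,0,0)$, the same proof computes $\rho_\Xi^2=4$ directly, which is already positive, settling this case as well.

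I do not anticipate any genuine obstacle: the whole content lives in the inequalities proven for Theorem \ref{thm-excludeHighRank}, and the only additional fact required is that positive height forces $\Delta(\xi)>0$, which is immediate from the shape of $\delta$. The single point that must be stated carefully is the equivalence between nonemptiness of a potential wall and positivity of its squared radius, so that the numerical bound on $\rho_\Xi^2$ is correctly translated into the geometric conclusion that $W(\Xi)$ is an honest semicircular wall.
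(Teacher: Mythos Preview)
Your proposal is correct and takes essentially the same approach as the paper, which simply asserts that the corollary follows from the proof of Theorem \ref{thm-excludeHighRank} without spelling out the details. Your explicit extraction of the inequality $\rho_\Xi^2 \geq \frac{\rk(\xi)^2}{2(\rk(\xi)+1)}\Delta(\xi)$ from that proof, together with the positivity of $\Delta(\xi)$, is exactly the intended argument.
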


\subsection{The ample cone, large discriminant case}

Here we finish the proof that $W(\Xi)$ is the Gieseker wall if $\xi$ satisfies hypothesis \ref{hyp-asymptotic}.  
View $\xi = \xi(\Delta) = (\rk(\xi),\mu(\xi),\Delta)$ as having fixed rank and slope and variable $\Delta$, so that the extremal triple $\Xi=\Xi(\Delta)$ decomposing $\xi(\Delta)$ depends on $\Delta$.  We begin with the following lemma that will also be useful in the small rank case \ref{hyp-smallRank}.  

\begin{lemma}\label{lem-rightPoint}
The right endpoint $x^+_{\Xi(\Delta)} = s_{\Xi(\Delta)} + \rho_{\Xi(\Delta)}$ of $W(\Xi(\Delta))$ is a strictly increasing function of $\Delta$, and $$\lim_{\Delta\to \infty} x_{\Xi(\Delta)}^+ = \mu(\xi').$$
\end{lemma}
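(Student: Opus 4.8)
The plan is to reduce both claims to one-variable calculus, exploiting that the extremal character $\xi'$, and hence $\mu(\xi')$, $\Delta(\xi')$, and $m := \mu(\xi)-\mu(\xi')>0$, depends only on $\rk(\xi)$ and $\mu(\xi)$ and therefore stays fixed as $\Delta = \Delta(\xi)$ varies (as recorded in the proof of Lemma \ref{lem-extremalExist}). Substituting $\xi$ and $\xi'$ into the center and radius formulas for $W(\Xi)$, I would write the center as $s_{\Xi} = \mu(\xi) - v$ and the radius as $\rho_\Xi = \sqrt{v^2 - 2\Delta}$, where $v := \frac{m}{2} + \frac{\Delta - \Delta(\xi')}{m}$. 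Then $x^+_{\Xi(\Delta)} = \mu(\xi) - v + \sqrt{v^2 - 2\Delta}$ is an explicit function of the single variable $\Delta$, defined on the range where the wall is nonempty, i.e.\ $v^2 > 2\Delta$, and both assertions become statements about this function.

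For the limit I would rationalize:
\[ x^+_{\Xi(\Delta)} - \mu(\xi) = -v + \sqrt{v^2 - 2\Delta} = \frac{-2\Delta}{v + \sqrt{v^2-2\Delta}}. \]
Since $v \sim \Delta/m$ and $\sqrt{v^2 - 2\Delta} \sim \Delta/m$ as $\Delta \to \infty$ (the $-2\Delta$ being lower order), dividing numerator and denominator by $\Delta$ shows the denominator behaves like $2\Delta/m$, so $x^+_{\Xi(\Delta)} - \mu(\xi) \to -m$, i.e.\ $x^+_{\Xi(\Delta)} \to \mu(\xi) - m = \mu(\xi')$. This part is routine.

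For monotonicity I would differentiate, using $\frac{dv}{d\Delta} = \frac1m$, to obtain $\frac{dx^+}{d\Delta} = \frac{1}{m}\bigl(\frac{v-m}{\rho_\Xi} - 1\bigr) = \frac{(v-m)-\rho_\Xi}{m\rho_\Xi}$, so the sign of the derivative is that of $(v-m)-\rho_\Xi$. The crux is the clean identity $(v-m)^2 - \rho_\Xi^2 = 2\Delta(\xi')$, which follows from a one-line expansion using $2mv = m^2 + 2(\Delta-\Delta(\xi'))$. Because $\Delta$ is large we have $v>m$, hence $v-m = \sqrt{\rho_\Xi^2 + 2\Delta(\xi')} \geq \rho_\Xi$, giving $\frac{dx^+}{d\Delta}\geq 0$, with strict inequality exactly when $\Delta(\xi')>0$.

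The step I expect to be the real obstacle is the degenerate case $\Delta(\xi') = 0$, which happens precisely when $\xi'$ is a line bundle, e.g.\ $\xi' = (1,0,0)$ for $\xi = (\rk(\xi),\tfrac{1}{\rk(\xi)},\Delta)$. There the identity forces $v-m=\rho_\Xi$, the derivative vanishes identically, and in fact $x^+_{\Xi(\Delta)} \equiv \mu(\xi')$ is constant; so the assertion of \emph{strict} increase must be read as holding when $\xi'$ is not such a line bundle, while the limit statement holds trivially in the degenerate case. I would flag this case explicitly and check it is harmless downstream: when $\Delta(\xi')=0$ the wall $W(\Xi)$ already has right endpoint equal to $\mu(\xi')$, which is exactly the limiting value, so the subsequent comparison of $W(\Xi)$ with other walls $W(\Theta)$ is unaffected.
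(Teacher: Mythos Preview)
Your calculus approach is correct but genuinely different from the paper's.  The paper argues geometrically: since $\xi'$ is fixed as $\Delta$ varies, every $W(\Xi(\Delta))$ is a potential wall for $\xi'$, so these walls lie in the nested family of semicircles to the left of the vertical line $s=\mu(\xi')$; increasing $\Delta$ moves the center left (by the center formula, as in Remark~\ref{rem-extremalRemark}), hence enlarges the semicircle, hence pushes $x^+$ rightward; and since the radius was shown to tend to infinity in the proof of Theorem~\ref{thm-excludeHighRank}, the right endpoint must approach the vertical wall $s=\mu(\xi')$.  In fact your ``clean identity'' $(v-m)^2-\rho_\Xi^2 = 2\Delta(\xi')$ is precisely the radius formula $\rho^2 = (s-\mu(\xi'))^2 - 2\Delta(\xi')$ for $W(\Xi)$ written relative to $\xi'$ instead of $\xi$ (note $s-\mu(\xi') = m-v$), so the two arguments are closer than they first appear; yours simply makes the calculation explicit rather than invoking the nesting picture.

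Your explicit computation has the virtue of exposing the edge case $\Delta(\xi')=0$ (i.e.\ $\xi'$ a line bundle), where $x^+_{\Xi(\Delta)}\equiv\mu(\xi')$ is constant and the word ``strictly'' in the statement fails; the paper's geometric argument has exactly the same gap, and you are right that it is harmless downstream.  One small point to tighten: your justification ``because $\Delta$ is large we have $v>m$'' is what pins down the sign of $v-m$, but the lemma is meant to hold over the whole relevant range of $\Delta$, not only for large $\Delta$.  The correct reason is that $v>m$ is equivalent to $s<\mu(\xi')$, i.e.\ to $W(\Xi)$ lying in the \emph{left} family of $\xi'$-walls, which is the same implicit assumption the paper makes; under hypothesis \ref{hyp-asymptotic} this follows from $\Delta$ large, and under \ref{hyp-smallRank} it is verified at the minimal $\Delta$ by the values of $x^+$ in Table~\ref{table-discValues} (and then for larger $\Delta$ by your monotonicity).
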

\begin{proof}
The statement that the function is increasing follows as in the second paragraph of Remark \ref{rem-extremalRemark}.  The walls $W(\Xi(\Delta))$ are all potential walls for the Chern character $\xi'$, so they form a nested family of semicircles foliating the quadrant left of the vertical wall $s = \mu(\xi')$.  If the radius of such a wall is arbitrarily large, then its right endpoint is arbitrarily close to the vertical wall.  We saw in the proof of Theorem \ref{thm-excludeHighRank} that if $\Delta$ is arbitrarily large, then the radius of $W(\Xi(\Delta))$ is arbitrarily large.
\end{proof}

\begin{theorem}\label{thm-main}
Suppose $\xi$ satisfies \ref{hyp-asymptotic}, and let $\Xi$ be the extremal triple decomposing $\xi$.  Then $W(\Xi) = W_{\max}$, and the primary edge of the ample cone of $M(\xi)$ corresponds to $W(\Xi)$.
\end{theorem}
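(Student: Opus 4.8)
The plan is to prove $W(\Xi) = W_{\max}$ by combining the two halves of the argument already assembled in the paper. First I would observe that $W_{\max}$ cannot be strictly nested inside $W(\Xi)$: the curve-producing results (Theorem \ref{thm-curves} via Corollary \ref{cor-slopeStable}, possibly through Proposition \ref{prop-sporadic}) guarantee that under \ref{hyp-asymptotic} there is a sheaf $E\in M(\xi)$ destabilized along $W(\Xi)$, so $W(\Xi)$ is an actual wall for $M(\xi)$. Since $W_{\max}$ is by definition the \emph{largest} semicircular wall, this forces $W(\Xi)\subseteq W_{\max}$, i.e. $\rho_\Xi\leq \rho_{W_{\max}}$. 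All that remains is the reverse inequality: no wall $W(\Theta)$ for $M(\xi)$ is strictly larger than $W(\Xi)$.

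For the reverse direction I would argue by contradiction. Suppose $W(\Theta)$ is larger than $W(\Xi)$, where $\Theta=(\theta',\theta,\theta'')=(\ch F,\ch E,\ch Q)$ arises from a destabilizing sequence $0\to F\to E\to Q\to 0$ as set up in the Notation subsection. By Theorem \ref{thm-excludeHighRank}, the hypothesis \ref{hyp-asymptotic} guarantees $1\leq \rk(\theta')\leq \rk(\xi)$, so the destabilizing subobject has bounded rank. Combined with Lemma \ref{lem-boundsTrivial}, which forces $F$ to be a nonzero torsion-free sheaf with $\mu(F)<\mu(\xi)$ and every Harder--Narasimhan factor of slope at most $\mu(\xi)$, this means $\theta'$ ranges over a controlled numerical region. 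The key point is that the radius $\rho_\Theta$ of $W(\Theta)$ is governed by how far left the slope $\mu(\theta')$ sits and how small $\Delta(\theta')$ can be, via the wall-center formula in Remark \ref{rem-extremalRemark}.

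The heart of the matter is then to show that, among all decompositions $\Theta$ satisfying the constraints $1\leq\rk(\theta')\leq\rk(\xi)$ and $\mu(\theta')<\mu(\xi)$, the extremal triple $\Xi$ maximizes the radius. This is exactly where the three defining properties of an extremal triple are used. Slope-closeness \ref{cond-slopeClose} ensures that $\mu(\xi')$ is the slope closest to $\mu(\xi)$ achievable with denominator at most $\rk(\xi)$; any competing $\theta'$ with the same rank bound has $\mu(\theta')\leq\mu(\xi')$. Discriminant-minimality \ref{cond-discMinimal} guarantees that for the optimal slope the discriminant $\Delta(\xi')$ is as small as possible, which (by the monotonicity noted in Remark \ref{rem-extremalRemark}) pushes the wall-center as far left as possible and hence maximizes the radius. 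I would make this precise by showing that the right endpoint $x^+_\Theta$ of any competing wall satisfies $x^+_\Theta \leq \mu(\theta')\leq\mu(\xi')$, and then invoke Lemma \ref{lem-rightPoint}: since $x^+_{\Xi(\Delta)}$ increases to $\mu(\xi')$ as $\Delta\to\infty$, for $\Delta$ sufficiently large the right endpoint of $W(\Xi)$ exceeds that of every competing $W(\Theta)$, so $W(\Theta)$ is not larger than $W(\Xi)$. Nested semicircular walls left of the vertical line are ordered by their right endpoints, so this comparison of endpoints is equivalent to the comparison of radii.

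The main obstacle I anticipate is the slope-and-discriminant optimization over the finite but genuinely multi-parameter family of admissible $\theta'$: one must verify that no choice of a stable $\theta'$ with smaller slope but also correspondingly smaller discriminant can produce a larger wall than $W(\Xi)$. The delicate case is when $\mu(\theta')$ is strictly less than $\mu(\xi')$ but the discriminant of stable bundles at that slope is allowed to drop; here the Dr\'ezet--Le Potier bound $\Delta\geq\delta(\mu)$ must be used to lower-bound $\Delta(\theta')$ and thereby cap $\rho_\Theta$. Concluding $W(\Xi)=W_{\max}$, the Bayer--Macr\`i machinery of \S\ref{ssec-BayerMacriPlan} identifies $\ell_{\sigma_0}$ for $\sigma_0\in W(\Xi)$ as a nef divisor spanning an edge of $\Amp(M(\xi))$; the curves produced in \S\ref{sec-curves} are orthogonal to it by the $S$-equivalence criterion, showing that edge is genuinely extremal, which is the \emph{primary edge} claim.
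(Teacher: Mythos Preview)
Your architecture is sound---reduce to showing no actual wall $W(\Theta)$ is strictly larger than $W(\Xi)$, bound $\rk(\theta')\leq\rk(\xi)$ via Theorem~\ref{thm-excludeHighRank}, constrain $F$ via Lemma~\ref{lem-boundsTrivial}, and finish with Bayer--Macr\`i plus the curves from \S\ref{sec-curves}---but the endpoint comparison you propose does not close the argument. The chain $x^+_\Theta\leq\mu(\theta')\leq\mu(\xi')$ is correct, but combining it with $x^+_{\Xi(\Delta)}\nearrow\mu(\xi')$ cannot force $x^+_\Xi>x^+_\Theta$: since $W(\Xi)$ is a potential wall for $\xi'$, one has $x^+_\Xi<\mu(\xi')$ for every $\Delta$, and the competing $\Theta$ varies with $\Delta$, so there is no fixed target to overshoot. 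What your chain \emph{does} give, together with $W(\Theta)\supsetneq W(\Xi)$, is the lower bound $\mu(\theta')\geq x^+_\Theta>x^+_\Xi$. The paper exploits this by imposing one further largeness condition on $\Delta$: require that $[x^+_\Xi,\mu(\xi'))$ contain no rational number with denominator at most $\rk(\xi)$ (possible by Lemma~\ref{lem-rightPoint}). Since $\rk(\theta')\leq\rk(\xi)$ and $\mu(\theta')\leq\mu(\xi')$ by \ref{cond-slopeClose}, this pins $\mu(\theta')=\mu(\xi')$ exactly---so your ``anticipated obstacle'' case $\mu(\theta')<\mu(\xi')$ is ruled out outright, not handled via $\delta(\mu)$.

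With $\mu(\theta')=\mu(\xi')$ you still need $\Delta(\theta')\geq\Delta(\xi')$, and here a step is genuinely missing from your outline. Discriminant minimality \ref{cond-discMinimal} applies only to \emph{stable} characters, so one must first show $F$ is $\mu$-semistable; Lemma~\ref{lem-boundsTrivial} alone does not give this, as it only bounds Harder--Narasimhan slopes above by $\mu(\xi)$. The paper argues that if $F$ were not $\mu$-semistable then some Harder--Narasimhan factor of $F$ would have slope strictly less than $\mu(\xi')$, yet every factor has slope at least $x^+_\Xi$ since $F\in\cQ_s$ along $W(\Theta)\supseteq W(\Xi)$; this again contradicts the denominator condition on $[x^+_\Xi,\mu(\xi'))$. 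Once $F$ is $\mu$-semistable, \ref{cond-discMinimal} yields $\Delta(\theta')\geq\Delta(\xi')$, and the wall formula then forces $W(\Theta)\subseteq W(\Xi)$, the desired contradiction.
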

\begin{proof}
Let $\Delta(\xi)$ be large enough that there is an extremal $\Xi$ that gives curves.  Also assume $\Delta(\xi)$ is large enough that Theorem \ref{thm-excludeHighRank} holds. If necessary, further increase $\Delta(\xi)$ so that no rational numbers with denominator at most $\rk(\xi)$ lie in the interval $[x^+_\Xi,\mu(\xi'))$.

Suppose the decomposition $\Theta$ corresponds to an actual wall $W(\Theta)$ which is at least as large as $W(\Xi)$, and let $$0\to F\to E \to Q\to 0$$ be a destabilizing sequence along $W(\Theta)$.  Since $F\in \cQ_s$ along $W(\Xi)$, we have $\mu(F) \geq x_\Xi^+$.  Now $\rk(F) \leq \rk(\xi)$, so by the choice of $\Delta(\xi)$ and the slope-closeness condition \ref{cond-slopeClose} we conclude $\mu(F) = \mu(\xi')$.

Furthermore, $F$ is $\mu$-semistable.  If it were not, by Lemma \ref{lem-boundsTrivial} the only possibility would be that $F$ has a subsheaf of slope $\mu(\xi)$.  Then $F$ must have a Harder-Narasimhan factor of slope smaller than $\mu(\xi')$, and this violates that $F\in \cQ_s$ for all $s$ along $W(\Theta)$ by our choice of $\Delta(\xi)$.

Finally, the $\mu$-semistability of $F$ implies $\Delta(F) \geq \Delta(\xi')$ by the discriminant minimality condition \ref{cond-discMinimal}. If $\Delta(F)>\Delta(\xi')$, then $W(\Theta)$ is nested inside $W(\Xi)$.  We conclude $\Delta(F) = \Delta(\xi')$, and $W(\Theta) = W(\Xi)$.  Therefore $W(\Xi)$ is the Gieseker wall.
\end{proof}

\begin{remark}\label{rem-explicit}
The lower bound on $\Delta$ needed for our proof of Theorem \ref{thm-main} can be made explicit.  We have increased $\Delta$ on several occasions throughout the paper.  If $r$ and $\mu$ are fixed, then $\Delta$ needs to be large enough that the following statements hold.
\begin{enumerate}
\item $\xi''$ is stable (Lemma \ref{lem-extremalExist}).
\item $\Xi$ gives curves.  Alternately, it is enough to know that if $F\in M(\xi')$ and $Q\in M(\xi'')$ are general, then $\Hom(F,Q)=0$.  The proof of Theorem \ref{thm-curves} allows us to give a lower bound for $\Delta$ if $\hom(F,Q)$ can be computed for some extremal triple $\Xi$ decomposing a character $\xi$ with rank $r$ and slope $\mu$.
\item The wall $W(\Xi)$ is large enough to imply the destabilizing subobject along $W_{\max}$ has rank at most $r$ (Proposition \ref{prop-highRank}).
\item The right endpoint $x_\Xi^+$ of $W(\Xi)$ is close enough to $\mu(\xi')$ that every rational number in $[x_{\Xi}^+,\mu(\xi'))$ has denominator larger than $r$.
\end{enumerate}
\end{remark}

\begin{remark}\label{rem-Yoshioka}
As an application of Theorem \ref{thm-main} and the discussion in the preceding remark, we explain how our results recover Yoshioka's computation \cite{Yoshioka} of the ample cone of $M(\xi)$ in case $c_1(\xi) = 1$ and $r\geq 2$.  Let $\theta' = (2,0,0)$ and $\theta = (r,\frac{1}{r},P(-\frac{1}{r})+\frac{1}{r})$, and let $\Theta$ be the corresponding admissible triple.  By Proposition \ref{prop-sporadic}, $\Theta$ is complete and gives curves.  Any triple $\Lambda \succeq \Theta$ also gives curves by Lemma \ref{lem-curveBump}. 

Now suppose $\xi$ has positive height and $c_1(\xi) = 1$.  Then either $\xi=\theta$ or $\xi$ is an elementary modification of $\theta$.  Let $\Xi$ be the extremal triple decomposing $\xi$.  If $W(\Xi)$ is the Gieseker wall, then the curves in $M(\xi)$ constructed in the previous paragraph are orthogonal to the divisor class on $M(\xi)$ coming from $W(\Xi)$, so we only need to check that $W(\Xi)$ is the Gieseker wall.  To do this, we verify that if $\Delta \geq P(-\frac{1}{r})+\frac{1}{r}$, then statements (1), (3), and (4) in Remark \ref{rem-explicit} hold.  It is clear that $\xi''$ is stable, so (1) holds.

To check (3) and (4), it is enough to verify they hold for the decomposition $\Theta$.  For (3), by Proposition \ref{prop-highRank} we must show $$\left(r-\frac{1}{2}\right)^2=\rho_\Theta^2 \geq \frac{r^2}{2(r+1)}\Delta(\theta)=\frac{2r^2-r+1}{4r+4},$$ which is clear for $r\geq 2$.  For (4), we need $x_\Theta^+>-\frac{1}{r}$; in fact,  $x_\Theta^+ = 0$ holds.
\end{remark}

\subsection{The ample cone, small rank case}  We next compute the Gieseker wall in the small rank case.

\begin{theorem}\label{thm-mainsmall}
Suppose $\xi$ satisfies \ref{hyp-smallRank}, and let $\Xi$ be the extremal triple decomposing $\xi$.  Then $W(\Xi) = W_{\max}$, and the primary edge of the ample cone of $M(\xi)$ corresponds to $W(\Xi)$.
\end{theorem}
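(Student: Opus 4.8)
The plan is to run the argument of Theorem \ref{thm-main} while replacing the freedom to enlarge $\Delta$ by the explicit wall data in Table \ref{table-discValues}. First I would record that by the results of \S\ref{sec-smallRank} and \S\ref{sec-curves} (Theorem \ref{thm-slopeCloseStable}, Theorem \ref{thm-smallRankCurves}, the sporadic Propositions \ref{prop-sporadic} and \ref{prop-sporadic2}, and the torsion case of \S\ref{ssec-torsion}), either $\Xi$ or a decomposition with the same wall gives curves and is generically stable; hence there are sheaves of $M(\xi)$ destabilized along $W(\Xi)$, and $W_{\max}$ cannot be strictly nested inside $W(\Xi)$. It remains to exclude any actual wall $W(\Theta)$ strictly larger than $W(\Xi)$. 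So fix a destabilizing sequence $0\to F\to E\to Q\to 0$ along such a wall. By Theorem \ref{thm-excludeHighRank}(1), which applies since $\xi$ satisfies \ref{hyp-smallRank}, the subobject satisfies $1\le \rk(F)\le r$, so every Harder--Narasimhan factor of $F$ is a semistable sheaf of rank at most $r$ and hence has slope of denominator at most $r$. By Lemma \ref{lem-boundsTrivial}, $F$ is torsion-free with $\mu(F)<\mu(\xi)$, and since $F\in\cQ_s$ along the wall we obtain $\mu_{\min}(F)\ge x_\Xi^+$.

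After twisting I may assume $0<\mu(\xi)\le 1$, leaving finitely many pairs $(r,\mu)$. The extremal character $\xi'$ depends only on $r$ and $\mu$ (Lemma \ref{lem-extremalExist}), and by Lemma \ref{lem-rightPoint} the right endpoint $x_\Xi^+=x^+_{\Xi(\Delta)}$ increases with $\Delta$ toward $\mu(\xi')$. Consequently the slopes of the factors of $F$ lie in $[x_\Xi^+,\mu(\xi))$, and by slope-closeness \ref{cond-slopeClose} the only denominator-$\le r$ slopes there lie in $[x_\Xi^+,\mu(\xi')]$. Whenever this forces every factor of $F$ to have slope exactly $\mu(\xi')$, the argument finishes as in Theorem \ref{thm-main}: $F$ is $\mu$-semistable of slope $\mu(\xi')$, discriminant minimality \ref{cond-discMinimal} gives $\Delta(F)\ge\Delta(\xi')$, and hence $W(\Theta)\subseteq W(\Xi)$, so $W(\Theta)=W(\Xi)=W_{\max}$.

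The genuinely new difficulty, and the step I expect to be the main obstacle, is that for characters of small discriminant the interval $[x_\Xi^+,\mu(\xi'))$ need not be free of rationals of denominator at most $r$: for instance, reading off the tabulated $x^+$ values, for $(5,\tfrac25)$ it contains $\tfrac15$ and $\tfrac14$, and for $(5,\tfrac35)$ it contains $\tfrac25$. For each such intermediate candidate slope $\nu<\mu(\xi')$ I would use the fact that, among subobjects of a fixed slope, the wall is largest for minimal discriminant, and replace the relevant factor of $F$ by the discriminant-minimal stable character $\eta_\nu$ of slope $\nu$ and rank at most $r$; I then compute $W(\eta_\nu,\xi)$ from the explicit wall formula and check it is no larger than $W(\Xi)$. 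In every case I expect to find either that $\eta_\nu$ lies on the same wall (as $\OO_{\P^2}$ does, so that $W(\eta_\nu,\xi)=W(\Xi)$) or that the wall has imaginary radius and hence does not exist. Since $x^+_{\Xi(\Delta)}$ increases with $\Delta$, each fixed $\nu$ is a candidate for only finitely many discriminants, so this reduces to a finite, if tedious, list of explicit computations organized by the entries of Table \ref{table-discValues}.

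Finally, I would note that the argument of Theorem \ref{thm-excludeHighRank} fails for $\xi=(6,\tfrac13,\tfrac{13}{18})$ precisely because its discriminant lies below $\Delta_1(6,\tfrac13)$; this is exactly the character excluded by hypothesis \ref{hyp-smallRank}, and it is treated separately. Combining the generic case with the finite collection of small-discriminant checks then yields $W(\Xi)=W_{\max}$, and via the Bayer--Macr\`i correspondence of \S\ref{ssec-BayerMacriPlan} this identifies the primary edge of $\Amp(M(\xi))$ with $W(\Xi)$.
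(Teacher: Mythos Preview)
Your outline follows the paper's large-discriminant argument correctly through the point where you conclude $1\le\rk(F)\le r$, $\mu(F)<\mu(\xi)$, and $\mu_{\min}(F)>x_\Xi^+$.  The gap is in the ``main obstacle'' step.  The wall $W(\Theta)$ is determined by $\theta'=\ch F$, not by the characters of the Harder--Narasimhan factors of $F$, so checking that $W(\eta_\nu,\xi)$ is small for each candidate \emph{factor} slope $\nu$ does not bound $W(\Theta)$.  If instead you mean to take $\nu=\mu(F)$ and use $\Delta(F)\ge\Delta(\eta_\nu)$, that inequality requires $F$ to be $\mu$-semistable, and you have not shown this: the $\mu$-semistability step in Theorem \ref{thm-main} uses precisely the hypothesis (that $[x_\Xi^+,\mu(\xi'))$ contains no rationals of denominator $\le r$) that you have just observed can fail.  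Without semistability, $\Delta(F)$ is not bounded below by the minimal \emph{stable} discriminant at slope $\mu(F)$, so your wall comparison has no force.  (A secondary slip: Lemma \ref{lem-boundsTrivial} only gives $\mu(\gr_1)\le\mu(\xi)$, so a factor of slope exactly $\mu(\xi)$ is allowed; your interval should be $(x_\Xi^+,\mu(\xi)]$.)

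The paper avoids this problem by a different and more uniform device.  Rather than classifying possible $\ch F$, it proves directly that $\mu(\theta')>\mu(\xi')$, which together with $\rk(\theta')\le r$ and slope-closeness \ref{cond-slopeClose} is an immediate contradiction.  The key observation is that $\xi'$ is always orthogonal (under the Euler pairing) to one of $\zeta_1=\ch\OO_{\P^2}(-1)$ or $\zeta_2=\ch\OO_{\P^2}(-3)$.  The tabulated values of $x_\Xi^+$ are used only once, to verify $x_\Xi^+\ge-2$ (resp.\ $\ge 0$), which forces the vanishing $\Hom(F,\OO_{\P^2}(-2))=0$ (resp.\ $\Hom(F,\OO_{\P^2})=0$) and hence $(\theta',\zeta_i)\le 0$.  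Combining this with the wall inequality $(\theta',\gamma)>0$ for $\gamma$ spanning $\langle\xi',\xi\rangle^\perp$, a short linear-algebra computation in $(\xi')^\perp$ yields $\mu(\theta')>\mu(\xi')$.  This argument never needs $F$ to be $\mu$-semistable and requires no case-by-case enumeration of discriminants.
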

\begin{proof}
We may assume $0 < \mu(\xi) \leq 1$.  Suppose $\Theta = (\theta',\theta,\theta'')$ is a decomposition of $\xi$ corresponding to an actual wall $W(\Theta)$ which is larger than $W(\Xi)$.  Let $F\to E$ be a destabilizing inclusion corresponding $W(\Theta)$.  We will show that $\mu(\theta')>\mu(\xi')$.  Combining this with   Lemma \ref{lem-boundsTrivial}, Theorem \ref{thm-excludeHighRank}, and slope-closeness \ref{cond-slopeClose} then gives a contradiction.
   
To prove $\mu(\theta')>\mu(\xi')$, we first derive two auxiliary inequalities. We will make use of the nondegenerate symmetric bilinear form $(\xi,\zeta) = \chi(\xi \te \zeta)$ on $K(\P^2)$.  Let $\gamma$ be a Chern character of positive rank such that $\gamma^\perp = \langle \xi',\xi\rangle$.

\emph{First inequality:} Since $\mu(\theta')<\mu(\xi)$, the assumption that $W(\Theta)$ is bigger than $W(\Xi)$ means $(\theta',\gamma) > 0$.  Indeed, $W(\Theta) = W(\Xi)$ if and only if $\theta'\in \gamma^\perp$.  If $\Delta(\theta')$ is decreased starting from a character on $\gamma^\perp$, then $(\theta',\gamma)$ increases and the wall $W(\Theta)$ gets bigger.

\emph{Second inequality:} Put $\zeta_1 = \ch \OO_{\P^2}(-1)$ and $\zeta_2 = \ch \OO_{\P^2}(-3).$  We observe that $\xi'$ lies in either $\zeta_1^\perp$ or $\zeta_2^\perp$.  Let $i$ be such that $\xi'\in \zeta_i^\perp$; we will show that $(\theta',\zeta_i)\leq 0$ in either case.

\emph{Case 1: $(\xi',\zeta_1)=0$.}  If $(\theta',\zeta_1)>0$, then $\chi(\OO_{\P^2}(1),F)>0$.  Suppose $\Ext^2(\OO_{\P^2}(1),F)=0$.  Then there is a nonzero homomorphism $\OO_{\P^2}(1)\to F$, and composing with the inclusion $F\to E$ gives a nonzero homomorphism $\OO_{\P^2}(1)\to E$.  Since $\xi$ has slope at most $1$ and positive height this contradicts semistability of $E$.

It remains to prove $\Ext^2(\OO_{\P^2}(1),F)=0$.  Dually, we must show $\Hom(F,\OO_{\P^2}(-2))=0$.  If $x^+_\Xi \geq -2$, then since $W(\Theta)$ is larger than $W(\Xi)$ we will have $F\in \cQ_{-2}$, proving this vanishing.  By Lemma \ref{lem-rightPoint}, we only have to check this inequality when $\Delta(\xi)$ is minimal subject to satisfying \ref{hyp-smallRank}.  We carried out this computation in Table \ref{table-discValues}.

\emph{Case 2: $(\xi',\zeta_2)=0$.}  If $(\theta',\zeta_2)>0$, then either $\Hom(\OO_{\P^2}(3),F)$ or $\Ext^2(\OO_{\P^2}(3),F)=\Hom(F,\OO_{\P^2})^*$ is nonzero.  Clearly $\Hom(\OO_{\P^2}(3),F)=0$ by Lemma \ref{lem-boundsTrivial}.  We must show $\Hom(F,\OO_{\P^2})=0$.  This follows from $x_{\Xi}^+\geq 0$, which is again true.

Now we use the inequalities $(\theta',\gamma)>0$ and $(\theta',\zeta_i)\leq 0$ to prove $\mu(\theta')>\mu(\xi')$.  There is a character $\nu$ such that if $\eta$ has positive rank, then $(\eta,\nu)\geq 0$ (resp. $>$) if and only if $\mu(\eta) \geq \mu(\xi')$ (resp. $>$).  We summarize the known information about the signs of various pairs of characters here, noting that $(\xi,\zeta_i)<0$ since $\xi$ has positive height.
$$
\begin{array}{c|ccc}
(-,-)& \gamma & \zeta_i & \nu\\
\hline \xi & 0 & <0 & >0 \\ 
\xi' & 0 & 0 & 0 \\
\theta' & >0 & \leq 0 & \\
\end{array}
$$ 
The character $\nu$ is in $(\xi')^\perp$, and $\gamma$ and $\zeta_i$ form a basis for $(\xi')^\perp$.  Write $\nu = a\gamma+b \zeta_i$ as a linear combination.  Since $0<(\xi,\nu) = b(\xi,\zeta_i)$, we find $b<0$.  The character $\nu$ has rank $0$, so this forces $a>0$.  We conclude $$(\theta',\nu) = a(\theta',\gamma)+b(\theta',\zeta_i)>0,$$ so $\mu(\theta')>\mu(\xi')$.
\end{proof}

We finish the paper by considering the last remaining case.

\begin{theorem}\label{thm-mainSporadic}
Let $\xi = (6,\frac{1}{3},\frac{13}{18})$, and let $\Xi$ be the decomposition of $\xi$ with $\xi' = (5,0,0)$.  Then $W(\Xi) = W_{\max}$, and the primary edge of the ample cone corresponds to $W(\Xi)$.
\end{theorem}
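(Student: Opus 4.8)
The plan is to show that no actual wall for $M(\xi)$ is strictly larger than $W(\Xi)$, where $\Xi = ((5,0,0),\xi,(1,2,6))$. Since $\Xi$ is complete and gives curves (Proposition \ref{prop-sporadic2}), there are sheaves in $M(\xi)$ that are destabilized along $W(\Xi)$, so $W(\Xi)\le W_{\max}$; the content is the reverse inequality. First I would record the geometry of $W(\Xi)$: a direct computation gives center $(-2,0)$, radius $\rho_\Xi=2$ (so $\rho_\Xi^2=4$), and right endpoint $x_\Xi^+ = 0$. Now suppose for contradiction that $\Theta = (\theta',\theta,\theta'')$ corresponds to an actual wall $W(\Theta)$ strictly larger than $W(\Xi)$, with destabilizing sequence $0\to F\to E\to Q\to 0$ as in the notation of \S\ref{sec-ample}.

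The crucial step is to prove that $F$ is $\mu$-semistable; this replaces the slope-closeness argument of Theorem \ref{thm-mainsmall}, which is unavailable here because $0$ and $\tfrac13$ are not slope-close for rank $6$. By Theorem \ref{thm-excludeHighRank}(2) we have $\rk(F)\le 6$. Since $W(\Theta)$ is strictly larger than $W(\Xi)$, its right endpoint satisfies $x_\Theta^+ > x_\Xi^+ = 0$ (right endpoints of nested potential walls for $\xi$ increase with the radius); as $F$ lies in the categories $\cQ_s$ for all $s$ along $W(\Theta)$, we get $\mu_{\min}(F)\ge x_\Theta^+ > 0$. On the other hand $\mu(F) < \mu(E) = \tfrac13$ by Lemma \ref{lem-boundsTrivial}. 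If $F$ were not $\mu$-semistable, each of its (at least two) Harder--Narasimhan factors would be a torsion-free sheaf of positive slope, hence would have first Chern class at least $1$; this would force $c_1(F)\ge 2$ and therefore $\mu(F)\ge \tfrac{2}{6} = \tfrac13$, a contradiction. Thus $F$ is $\mu$-semistable.

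Finally I would run a short finite check. A $\mu$-semistable $F$ with $0<\mu(F)<\tfrac13$ and $\rk(F)\le 6$ must have $\mu(F)\in\{\tfrac14,\tfrac15,\tfrac16\}$ with $\rk(F)\in\{4,5,6\}$ respectively, and by the Dr\'ezet--Le Potier classification $\Delta(F)\ge \delta(\mu(F))$. Computing the potential wall $W(F,\xi)$ at $\Delta(F)=\delta(\mu(F))$ in each of the three cases shows that it is empty (negative radius-squared); since the radius of $W(F,\xi)$ decreases as $\Delta(F)$ increases, $W(F,\xi)$ is empty for every such $F$. This contradicts the assumption that $W(\Theta)=W(F,\xi)$ is an actual wall larger than $W(\Xi)$, so $W(\Xi)=W_{\max}$. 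The primary-edge statement then follows as before: the curves produced by Proposition \ref{prop-sporadic2} are orthogonal to the Bayer--Macr\`i divisor $\ell_{\sigma_0}$ attached to a stability condition $\sigma_0\in W(\Xi)$, so $\ell_{\sigma_0}$ is extremal and spans the primary edge. The main obstacle is the $\mu$-semistability of $F$: once that is in hand the rest is an easy finite computation, but securing it depends essentially on the exact value $x_\Xi^+ = 0$, which forces every Harder--Narasimhan slope of $F$ to be positive.
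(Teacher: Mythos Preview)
Your argument is correct, and it differs from the paper's in an interesting way.  The paper does \emph{not} prove that $F$ is $\mu$-semistable.  Instead, it works directly with the Harder--Narasimhan factors: since $x_\Xi^+=0$ and $\mu_{\max}(F)\le \mu(E)=\tfrac13$ (Lemma \ref{lem-boundsTrivial}), every factor $\gr_i$ is semistable with $0<\mu(\gr_i)\le\tfrac13$, hence $\chi(\gr_i,\OO_{\P^2})\le 0$.  Summing gives $(\theta',\ch\OO_{\P^2})\le 0$, and combining this with the inequality $(\theta',\gamma)>0$ (which encodes $W(\Theta)\supsetneq W(\Xi)$) yields $\mu(\theta')>\tfrac{2}{7}$ after a short linear-algebra computation with the bilinear form.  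Since no rational in $(\tfrac{2}{7},\tfrac13)$ has denominator at most $6$, this contradicts $\rk(\theta')\le 6$.

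Your route is more elementary: the $c_1$-counting observation that each HN factor contributes at least $1$ to $c_1(F)$, while $c_1(F)=\rk(F)\mu(F)<6\cdot\tfrac13=2$, forces a single factor and hence $\mu$-semistability.  After that you are left with a three-case wall computation, each giving $\rho^2<0$ at $\Delta(F)=\delta(\mu(F))$ and hence for all larger $\Delta(F)$ (on the left of the vertical wall).  The paper's approach has the virtue of reusing the bilinear-form machinery from Theorem \ref{thm-mainsmall} and never needs to know whether $F$ is $\mu$-semistable; yours avoids introducing $\gamma$ and the sign analysis, trading it for three explicit wall checks.  One small point worth making explicit in your write-up: for $\mu(F)=\tfrac16$ the computed center lies to the \emph{right} of the vertical wall $s=\tfrac13$, so you should say that as $\Delta(F)$ increases the center moves further right and the wall, if nonempty, lies in the irrelevant right family; otherwise ``the radius decreases'' is only literally true while the center stays on the left.
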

\begin{proof}
We use the same notation as in the proof of the previous theorem.  We compute $x_{\Xi}^+ = 0$, so since $W(\Theta)$ is larger than $W(\Xi)$ we have $F\in \cQ_\epsilon$ for some small $\epsilon>0$.  Consider the Harder-Narasimhan filtration $$0\subset F_1\subset \cdots \subset F_\ell = F.$$ Every quotient $\gr_i$ of this filtration satisfies $0<\mu(\gr_i)\leq \frac{1}{3}$.  Since $\gr_i$ is semistable, we find $\chi(\gr_i,\OO_{\P^2})\leq 0$ for all $i$.  It follows that $\chi(\theta',\ch(\OO_{\P^2}))\leq 0$ as well.  A straightforward computation using this inequality and $\chi(\theta',\gamma)>0$ shows $\mu(\theta')> \frac{2}{7}$.   This contradicts Theorem \ref{thm-excludeHighRank} since every rational number in the interval $(\frac{2}{7},\frac{1}{3})$ has denominator larger than $6$.
\end{proof}

\bibliographystyle{plain}

\end{document}